\documentclass[11pt,a4paper]{article}
\usepackage[T1]{fontenc}
\usepackage[utf8]{inputenc}
\usepackage{newtxtext}
\usepackage[margin=26mm,headheight=14pt]{geometry}
\usepackage{amsmath,amsthm,mathtools}
\usepackage{newtxmath}
\usepackage{booktabs,array,tabularx,longtable,float,needspace}
\usepackage{microtype,fancyhdr,enumitem}
\usepackage[hidelinks]{hyperref}
\usepackage{aliascnt}
\usepackage[nameinlink,noabbrev]{cleveref}
\hypersetup{pdftitle={Positive Sectional Curvature on Homotopy Nine-Spheres},pdfauthor={Zhiqi Chen, Shaoqiang Deng, Junna Ni, Hui Zhang},pdfsubject={Homotopy nine-spheres, smooth structures, and positive sectional curvature}}
\allowdisplaybreaks[2]
\newtheorem{theorem}{Theorem}[section]
\newaliascnt{proposition}{theorem}
\newtheorem{proposition}[proposition]{Proposition}
\aliascntresetthe{proposition}
\newaliascnt{lemma}{theorem}
\newtheorem{lemma}[lemma]{Lemma}
\aliascntresetthe{lemma}
\newaliascnt{corollary}{theorem}
\newtheorem{corollary}[corollary]{Corollary}
\aliascntresetthe{corollary}
\theoremstyle{definition}
\newaliascnt{definition}{theorem}

\aliascntresetthe{definition}
\newaliascnt{question}{theorem}

\aliascntresetthe{question}
\theoremstyle{remark}
\newaliascnt{remark}{theorem}
\newtheorem{remark}[remark]{Remark}
\aliascntresetthe{remark}
\crefname{proposition}{proposition}{propositions}
\Crefname{proposition}{Proposition}{Propositions}
\crefname{lemma}{lemma}{lemmas}
\Crefname{lemma}{Lemma}{Lemmas}
\crefname{corollary}{corollary}{corollaries}
\Crefname{corollary}{Corollary}{Corollaries}
\crefname{definition}{definition}{definitions}
\Crefname{definition}{Definition}{Definitions}
\crefname{question}{question}{questions}
\Crefname{question}{Question}{Questions}
\crefname{remark}{remark}{remarks}
\Crefname{remark}{Remark}{Remarks}
\newcommand{\R}{\mathbb R}
\newcommand{\C}{\mathbb C}
\newcommand{\Hh}{\mathbb H}
\newcommand{\Z}{\mathbb Z}
\newcommand{\Pcal}{\mathcal P}
\newcommand{\Scal}{\operatorname{Scal}}
\newcommand{\Hess}{\operatorname{Hess}}
\newcommand{\Ad}{\operatorname{Ad}}
\newcommand{\SO}{\operatorname{SO}}
\newcommand{\SU}{\operatorname{SU}}

\newcommand{\im}{\operatorname{Im}}
\newcommand{\supp}{\operatorname{supp}}
\newcommand{\op}{\mathrm{op}}
\newcommand{\II}{\mathrm{II}}

\newcommand{\Diff}{\operatorname{Diff}}
\newcommand{\Topg}{\operatorname{Top}}
\newcommand{\Map}{\operatorname{Map}}
\newcommand{\pr}{\operatorname{pr}}
\newcommand{\Kerv}{\Sigma_K^9}
\newcommand{\plumb}{\mathbin{\mathrm{plumb}}}
\numberwithin{equation}{section}
\title{\bfseries Positive sectional curvature on\\homotopy nine-spheres}
\author{Zhiqi Chen, Shaoqiang Deng, Junna Ni and Hui Zhang}
\date{}
\begin{document}
\maketitle
\vspace{-1.4em}
\begin{abstract}
We determine which homotopy nine-spheres admit positive sectional curvature: they are precisely the four smooth types with vanishing real Dirac index. The other four admit no metric of nonnegative sectional curvature. In particular, we construct a strictly positively curved metric on the exotic Kervaire nine-sphere, answering the longstanding nonnegative-curvature existence question in this dimension. The proof keeps explicit attaching maps throughout. A quaternionic commutator realizes the Kervaire sphere, a mixed representation realizes the Hopf image of the exotic eight-sphere, and a four-periodic normal-framing formula identifies the remaining index-zero class. Following the compatible-disk method, we give the curvature and boundary estimates for the first two models, including an explicit lower bound for the Kervaire metric in terms of the curvature and first covariant-derivative norms of a fixed connection, with the base scale and fiber normalization fixed. For the third model, we construct a positive metric on a cylinder joining a quotient metric to its pullback by a marked right Hopf twist. A four-form modification controls the connection terms involving both left and right quaternionic multiplication. Two positive disk fillings close the cylinder with the prescribed attaching map. A local convolution lemma supplies the smooth gluing while preserving a strict sectional-curvature lower bound.
\end{abstract}
\smallskip
\noindent\textbf{2020 Mathematics Subject Classification.} Primary 53C20; secondary 57R55, 57S15.\\
\noindent\textbf{Keywords.} Exotic sphere; sectional curvature; Kervaire sphere; Milnor--Munkres pairing; Riemannian submersion; Dirac index; gluing.

\section{Introduction}\label{sec:intro}
We study positive sectional curvature on homotopy nine-spheres, with particular emphasis on the Kervaire nine-sphere. Our motivation comes from the relation between curvature and smooth structure. The standard sphere carries a metric of constant sectional curvature. Since Milnor's discovery of exotic spheres \cite{Milnor}, it has been natural to ask whether the same topological sphere, endowed with a different smooth structure, still admits a metric of positive sectional curvature.

This question has led to several important constructions. Gromoll and Meyer \cite{GM} found an exotic seven-sphere with nonnegative sectional curvature. Grove and Ziller \cite{GZ} constructed nonnegatively curved metrics on Milnor spheres, and Goette, Kerin and Shankar \cite{GKS} obtained such metrics on all exotic seven-spheres. Positive sectional curvature imposes a stronger condition. The differentiable sphere theorem of Brendle and Schoen \cite{BS}, for example, implies that a strictly pointwise quarter-pinched metric on a simply connected closed manifold has the standard spherical smooth type. Thus the construction of positive metrics on exotic spheres requires control of curvature without this pinching assumption.

The Kervaire spheres have played a central role in the higher-dimensional existence problem. In his 2003 paper, Totaro \cite[Section 5, p.~80]{Totaro} recorded that the conjecture asserting the existence of nonnegative sectional curvature on these spheres had already been open for about thirty years. The nine-dimensional exotic Kervaire sphere is a basic case of this problem. Our first objective is to settle this nine-dimensional case.

A major difficulty is that the natural symmetry does not provide such a metric. Grove, Verdiani, Wilking and Ziller \cite[Theorem A]{GVWZ} proved that the natural cohomogeneity one $\SO(2)\SO(5)$ action on the Brieskorn model of the Kervaire nine-sphere admits no invariant metric of nonnegative sectional curvature. They explicitly left open the existence of such a metric without the invariance requirement. This obstruction explains why a construction must allow more freedom than the natural cohomogeneity one ansatz. The disk construction used here provides that freedom and yields strictly positive sectional curvature.

In dimension nine the problem has a useful finite form. There are eight oriented smooth types of homotopy nine-spheres. The real Dirac index excludes positive scalar curvature on four of them \cite{Hitchin,CS}. On the other four, Stolz's theorem \cite{Stolz} gives positive scalar curvature. It is therefore natural to ask whether these same four smooth types admit positive sectional curvature. The scalar-curvature theorem alone does not answer this question, since sectional curvature must be positive on every tangent two-plane.

Recent constructions provide a way to approach the existence problem. He, Liu and Yau \cite{HLY} construct positive metrics on homotopy seven-spheres by decomposing each sphere into two disks and matching the metrics along the prescribed attaching diffeomorphism. Deng, Hu and Zhang \cite{DHZ} develop this method for a family of eleven-spheres and state a compatible-disk theorem for arbitrary finite-dimensional orthogonal $S^3$-representations. Galaz-Garc\'ia \cite{GG} combines the disk method with the models of Speran\c ca \cite{Speranca} to treat the exotic eight-sphere and the two order-three classes in dimension ten. We use these works in the preprint versions specified in the bibliography. The use of prescribed attaching maps also appears in the fifteen-dimensional construction of Chen, Deng and Zhang \cite{CDZ}. The first two models below fall within the compatible-disk framework of \cite[Proposition 5.1]{DHZ}. We give the needed dimension-nine constructions, including their curvature estimates and boundary comparison, in Sections~\ref{sec:bundle}--\ref{sec:gluing}.

Our first main result concerns the Kervaire sphere itself. Let $\Kerv$ denote the boundary of the plumbing of two copies of the tangent disk bundle of $S^5$.

\begin{theorem}[Positive curvature on the Kervaire nine-sphere]\label{thm:kervaire-existence}
The exotic Kervaire nine-sphere $\Kerv$ admits a smooth Riemannian metric of strictly positive sectional curvature.
\end{theorem}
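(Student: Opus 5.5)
The plan is to realize $\Kerv$ as a twisted double $D^9\cup_f D^9$ whose clutching diffeomorphism $f\colon S^8\to S^8$ is written down explicitly, and then to apply the compatible-disk theorem \cite[Proposition 5.1]{DHZ} for orthogonal $S^3$-representations. The topological input is that $\Theta_9\cong\Z/2\oplus\Z/2\oplus\Z/2$. The Kervaire class is the image under the Milnor--Munkres pairing of a pair of maps $S^3\to\SO(\cdot)$, or equivalently of a Hopf-type construction on a map $S^3\times S^5\to S^5$. Concretely, I would produce the clutching map from the quaternionic commutator $(p,q)\mapsto pqp^{-1}q^{-1}$ on $S^3\times S^3$. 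Suspending and composing with the framing data represents the element of $\pi_9^s$ (the class $\nu^3$, or more precisely $\eta\eta_8$ modulo image of $J$) whose Pontryagin--Thom class has nonzero Kervaire invariant-type image in $\operatorname{coker}J$. The first step is therefore the identification: the diffeomorphism $f_K$ of $S^8$ built from the commutator acting on $\Hh\oplus\Hh\oplus\R$ (via $S^3$ acting on the left and right) is isotopic to the Kervaire clutching map. I would prove this by computing the associated framed $9$-manifold and checking that it bounds the plumbing, or that its Arf invariant is one.

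The geometric step follows. The clutching map has the form $f_K(x)=\rho(\phi(x))\cdot x$ for an $S^3$-valued function $\phi$ and an orthogonal representation $\rho$ of $S^3$ on $\R^9$. This is exactly the setting of the compatible-disk framework. Each half is modeled as a disk $D^9$ carrying a metric obtained as the base of a Riemannian submersion from $S^3\times D^9$ (Cheeger deformation along the $S^3$-action), with a totally geodesic, round boundary collar. The key requirement is that the two boundary metrics agree under $f_K$. This holds because $f_K$ is an isometry of the submersion metric once the $S^3$ orbits are rescaled, since $\rho$ acts isometrically and $\phi$ only moves points along orbits. The metric on each disk is then a connection metric $g=g_{\text{base}}+\lambda^2\|\omega\|^2$ for a connection $\omega$ on the relevant principal bundle. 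Positivity follows from an O'Neill computation: the $A$-tensor term dominates the vertizontal curvatures, and the error terms are bounded by the curvature $\|F_\omega\|$ and $\|\nabla F_\omega\|$. Choosing the fiber scale $\lambda$ small relative to the base scale gives an explicit lower bound $\sec\ge c_0-C(\lambda\|F\|+\lambda^2\|\nabla F\|^2)>0$.

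Finally, the two halves are glued along the collar. The metrics agree there to first order but not to all orders. I would use a local convolution/mollification lemma in normal coordinates, which preserves a strict lower bound on sectional curvature up to an arbitrarily small loss in a $C^2$-controlled neighborhood, to smooth the glued metric. Near the boundary, the second fundamental forms of both sides must satisfy $\II_1+\II_2\ge 0$ (Perelman/Kosovskii-type gluing); convexity of the collars ensures this. The main obstacle, I expect, is the first step: pinning down precisely that the commutator-built $f_K$ represents the Kervaire class rather than an element of $bP_{10}$'s complement or of the Hopf image of the exotic $8$-sphere. On the curvature side, the delicate part is ensuring that the connection terms coming from the mixed left-and-right action in the commutator are controlled uniformly, so that the fixed $\lambda$ works globally.
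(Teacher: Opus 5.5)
Your overall architecture (an equivariant clutching map $J_\beta(z)=\rho(\beta(z))z$, two nine-disks obtained as free $S^3$-quotients of metrics on $D^9\times S^3$, and convolution smoothing under $\II_1+\II_2\ge0$) is the paper's. The curvature step, however, fails as you state it. Take a connection metric with constant fiber scale $\lambda$, orthonormal horizontal $X,Y$, a unit vertical $E$, and let $\Omega^E$ be the curvature component along $E$. By \eqref{eq:mixed1} and \eqref{eq:mixed4} with $\theta=0$, the plane spanned by $X$ and $Y+tE$ has curvature numerator $\kappa_{XY}+s\,c_1+\tfrac14s^2c_2$, where $s=t\lambda$, $c_1=(D_X\Omega^E)(Y,X)$ and $c_2=|\Omega^E(X,\cdot)|^2$. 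The minimum over $t$ does not depend on $\lambda$, and it is negative wherever $c_1^2>\kappa_{XY}c_2$. For an interpolating connection such as $\mathsf A_N=\chi_S\gamma^{-1}d\gamma$ this happens near the edge of the cutoff region, where $c_1^2/c_2\sim(\chi_S''/\chi_S')^2\to\infty$. Restricting to star-horizontal planes does not help: their normalized vertical part has size of order $|X|/\lambda$, which turns the $\lambda\,D\Omega$ coupling into an order-one loss. So no bound of the form $\sec\ge c_0-C(\lambda\|\Omega\|+\lambda^2\|D\Omega\|^2)$ holds.

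The missing idea is a strictly concave fiber radius: $r^2=\varepsilon e^{\varepsilon\varphi}$ with $-\Hess\varphi\ge\Lambda g_B$ and $\Lambda>M_0^2+2M_1^2/\kappa$ (\cref{prop:fine,cor:smallfiber}). This gives $\mathsf N\ge(\varepsilon\Lambda/2-O(\varepsilon^2))g_B$, exactly the size needed to absorb the $2rM_1hm$ term. Such a $\varphi$ cannot exist on a closed base, which is why a cap is used. On the southern cap it makes the seam concave in the fiber directions ($-q_0|U|^2$ in \eqref{eq:BS}).

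This also breaks your boundary picture. The collars cannot be round in the disk coordinates, since then $f_K\in O(9)$ and the double is standard. With the concave radius the southern collar is not totally geodesic either. Moreover, ``$\phi$ only moves points along orbits'' does not make $f_K$ an isometry. Its lift to $S^8\times S^3$ is the star-equivariant gauge transformation $u\mapsto u\beta^{-1}$, which carries the quotient metric of one connection to that of another. In the paper, both boundary quotients come from the single source metric $F_0^2h_{S^8}+\varrho_\partial h_{S^3}$ in a common gauge. The southern disk carries a curved star-invariant connection, trivial near the pole in the extending gauge and trivial near the seam in the north gauge. The northern disk has a flat connection with a nondecreasing convex radius whose boundary slope $+q_0$ cancels $-q_0$ exactly. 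Positivity there is proved only on star-horizontal planes, using the profile $s=F+F^3/(3\delta^2)$.

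On the topological side, your identification criterion points the wrong way. $K$ generates $bP_{10}$, which is exactly the kernel of $\Theta_9\to\operatorname{coker}J_9$. A clutching map whose class is nonzero in $\operatorname{coker}J_9$ therefore gives a sphere outside $bP_{10}$, never $K$; only your plumbing test is the right one. The bare commutator $S^3\times S^3\to S^3$ is the Samelson product generating $\pi_6S^3$, which belongs to dimension seven. The Milnor--Munkres pairing needed here pairs two maps $S^4\to\SO(4)$, not maps from $S^3$.

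The paper instead uses $C(x,y)=[a(x),a(y)]$ for the suspended Hopf map $a:\Hh\to S^3$. The equivariance $a(qx)=qa(x)q^{-1}$ yields $\beta(\rho(q)z)=q\beta(z)q^{-1}$ for the purely left representation $\rho(q)(t,u,v)=(t,qu,qv)$. This equivariance is absent from your proposal, and it is what makes the star action compatible with the transition $u_S=\beta^{-1}u_N$. The identity $G^{-1}F^{-1}GF=\Psi$ of \cref{prop:commutator} then exhibits $J_\beta$ as the pairing of two twists by $L_a$ whose stabilization $\operatorname{diag}(1,L_{\bar a})$ clutches $TS^5$. The plumbing correspondence then gives $\partial(D(TS^5)\plumb D(TS^5))$. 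Finally, no left-and-right action occurs in this model; that difficulty belongs to the class $K+C_\eta$.
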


The theorem gives an affirmative answer to the nonnegative-curvature existence question in dimension nine, with the stronger conclusion $\sec>0$. We prove this result by an explicit construction: we identify the smooth manifold in \cref{sec:clutching}, construct and glue its two disk metrics in Sections~\ref{sec:bundle}--\ref{sec:gluing}, and obtain a quantitative positive lower curvature bound in \cref{thm:positive}. This construction is the starting point for our study of the existence problem on all eight smooth nine-sphere types.

For $n\ge5$, let $\Theta_n$ denote the group of oriented homotopy $n$-spheres under connected sum, and set
\begin{equation}\label{eq:Pdef}
\Pcal_n=\{[\Sigma]\in\Theta_n:\Sigma\text{ admits a smooth metric with }\sec>0\}.
\end{equation}
Let $K\in\Theta_9$ be the Kervaire class, let $E\in\Theta_8$ be the exotic eight-sphere, and put $C_\eta=\eta E$, where $\eta$ acts by the Bredon pairing. We shall prove in \cref{prop:mixedclass} that $C_\eta\in\ker\alpha\setminus bP_{10}$. Together with the classical classification, this gives
\begin{equation}\label{eq:Theta9}
\Theta_9=\Z/2\{K\}\oplus\Z/2\{C_\eta\}\oplus\Z/2\{H\},
\end{equation}
where $H$ is any class with nonzero real Dirac index. The index values are
\begin{equation}\label{eq:alpha-values}
\alpha(K)=\alpha(C_\eta)=0,\qquad\alpha(H)=1.
\end{equation}
Our second main result completes the existence classification in dimension nine.

\begin{theorem}[Classification of the eight smooth types]\label{thm:classification}
A homotopy nine-sphere admits a smooth Riemannian metric of positive sectional curvature if and only if its real Dirac index vanishes. More precisely,
\begin{equation}\label{eq:classification}
\Pcal_9=\ker\alpha=\{0,K,C_\eta,K+C_\eta\}.
\end{equation}
The other four smooth types admit no smooth metric of nonnegative sectional curvature.
\end{theorem}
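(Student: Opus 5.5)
The classification splits into an obstruction half and an existence half, and I would handle them separately. For the obstruction half, suppose $\Sigma$ is a homotopy nine-sphere with $\alpha(\Sigma)\neq0$ that carries a metric of nonnegative sectional curvature. Nonnegative sectional curvature gives nonnegative Ricci and hence nonnegative scalar curvature. Since $\Sigma$ is simply connected and not Ricci-flat, the theorems of Böhm--Wilking, or the Aubin/Ehrlich deformation for Ricci, upgrade this to a metric of positive Ricci curvature, and therefore positive scalar curvature. A more elementary route also works: if the scalar curvature vanishes identically, then by Cheeger--Gromoll splitting with $\pi_1=0$ the metric is flat, which is impossible on a sphere. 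Otherwise Kazdan--Warner gives a metric of positive scalar curvature. Hitchin's theorem \cite{Hitchin} then forces $\alpha(\Sigma)=0$, a contradiction. This excludes the four types $H+\ker\alpha$, and it also shows $\Pcal_9\subseteq\ker\alpha$.

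For the existence half I would use the decomposition \eqref{eq:Theta9} together with the index values \eqref{eq:alpha-values}. Since $\alpha$ is a homomorphism $\Theta_9\to\Z/2$ with $\alpha(H)=1$, its kernel is exactly $\{0,K,C_\eta,K+C_\eta\}$. Positive curvature must then be exhibited on each of these four classes:
\begin{itemize}[nosep]
\item the class $0$ is handled by the round metric;
\item $K$ is handled by \cref{thm:kervaire-existence}, whose construction is the quaternionic-commutator clutching model glued by the compatible-disk method;
\item $C_\eta$ comes from the second model, the mixed representation realizing the Hopf image of $E$. Here I would identify its attaching map with $\eta E$ through \cref{prop:mixedclass}, and then invoke the compatible-disk theorem \cite[Proposition 5.1]{DHZ} with the curvature and boundary estimates of Sections~\ref{sec:bundle}--\ref{sec:gluing}.
\end{itemize}

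The remaining class $K+C_\eta$ is the main obstacle. Positive sectional curvature is not known to be preserved under connected sum, so this class cannot be obtained formally from the first two. I would first identify an explicit attaching diffeomorphism of $S^8$ representing $K+C_\eta$ via the four-periodic normal-framing formula. The intended representative is a composition of the Kervaire clutching map with a marked right Hopf twist. Next I would build a positively curved cylinder $S^8\times[0,1]$ interpolating between a quotient (submersion) metric and its pullback under the right Hopf twist. The connection terms that mix left and right quaternionic multiplication would be controlled by a four-form modification, with the O'Neill $A$-tensor terms kept dominated by the base curvature through the choice of scales. Finally I would cap both ends with positive disk fillings whose boundaries match the prescribed attaching map. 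The corners would be smoothed by the local convolution lemma, which preserves a strict lower sectional curvature bound.

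The most delicate verification is that the mixed left/right terms remain controlled along the whole cylinder. I would attempt this by a uniform scaling argument in which the fibers are made small relative to the base, with the derivative of the twisting connection bounded independently of the scale. Combining the two halves gives $\Pcal_9=\ker\alpha$. The obstruction argument gives the final sentence of the theorem, since it rules out nonnegative curvature on the other four types.
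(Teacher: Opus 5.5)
Your obstruction half is correct but takes a detour. You deform to positive scalar curvature and then quote Hitchin, whereas \cref{prop:nonnegative-obstruction} applies the Schr\"odinger--Lichnerowicz formula directly to a harmonic spinor, which handles the scalar-flat case with no deformation. In your elementary route, $\Scal\equiv0$ together with $\sec\ge0$ gives flatness at once from $\Scal=2\sum_{i<j}\sec(e_i,e_j)$, so Cheeger--Gromoll plays no role. An Aubin-type Ricci deformation needs $\operatorname{Ric}>0$ at some point, not mere non-Ricci-flatness, so rely on B\"ohm--Wilking or Kazdan--Warner instead. Your treatment of $0$, $K$ and $C_\eta$ agrees with the paper.

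The genuine error is your representative for $K+C_\eta$. The marked right Hopf twist $T_R$ of \eqref{eq:TR} already represents $K+C_\eta$ on its own. The chart \eqref{eq:rightHopfchart} has normal framing $\operatorname{diag}(1,R_q)$, of class $-[L_q]$ by \eqref{eq:rightframe}, so $T_R$ is the framed twist $t_{-1}$. \Cref{prop:periodicity} then gives $t_{-1}=K+C_\eta$; this is \cref{cor:rightclass}. Composition of attaching maps is connected sum (\cref{lem:tube-extension}), and $\Theta_9$ has exponent two. Hence composing with the Kervaire map $J_\beta$ yields $K+(K+C_\eta)=C_\eta$, a class you have already covered, and $K+C_\eta$ is never reached. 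Drop the Kervaire factor and realize $D^9\cup_{T_R}D^9$ directly. Use a cylinder whose left collar is the ordinary quotient metric and whose right collar is its $T_R$-pullback by \eqref{eq:TRpullback}, with the left cap attached by the identity and the right cap by $T_R$. This is also the only attaching map your cylinder actually produces, since it interpolates by $T_R$ alone.

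The second gap is the cylinder estimate. Small fibers with connection jets bounded independently of the scale do control the horizontal block. Once the four-form \eqref{eq:chosenfourform} has moved the order-one term $S_{ijab}$ into the $\Lambda^2H$--$\Lambda^2V$ block as $3S$, the vertical curvature of order $\varepsilon_I^{-2}$ absorbs it. But scaling gives no positive lower bound on the mixed block. By \eqref{eq:modifiedHV} that block is $-u^{-1}\Hess_hu\otimes I_V+u^2\mathcal B$, where $u^2\mathcal B$ is of order $\varepsilon_I^2$ and has no sign on general elements of $H\otimes V$. For a constant radius profile, \eqref{eq:radiusbounds} only gives $-u^{-1}\Hess_hu\ge-C_0\varepsilon_I^2h$.

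The missing idea is a concave fiber radius whose margin beats these terms. The paper takes $r=\varepsilon_If_a$ with $f_a=1+a(t-t^2/4)$, $a=A_I\varepsilon_I^2$ and $A_I$ large, over the warped base $dt^2+\cos^2(3/20+t)h_5$ with $\mathcal R_h\ge I$. That this margin is only $O(\varepsilon_I^2)$ is exactly why $S$ must leave the mixed block. Concavity then constrains the fillings. The cylinder's left end has fiber slope $-a$ along its outward normal, so the left disk needs fiber slope above $a$; the paper uses $q_N=11a/10$ and the profile of \cref{prop:leftcap}. The right seam has only the semidefinite sum \eqref{eq:rightshapesum}, which \cref{lem:localgluing} is designed to tolerate.
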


In particular, positive and nonnegative sectional curvature have the same existence criterion in dimension nine. The obstruction is classical. The main task is to construct positive metrics on the three nonstandard classes in $\ker\alpha$.

The proof has three geometric parts. The Kervaire metric is constructed in \cref{thm:positive}; the same disk method, with the representation norm retained in the estimates, gives the mixed metric in \cref{prop:mixedmetric}. The complementary class is treated by the marked right Hopf twist construction of \cref{thm:lastpositive}. Thus the curvature and boundary calculations needed for all three classes are included in the paper.

The disk architecture and principal connection estimates follow \cite{HLY,DHZ,GG}. Our additional ingredients are the identification of the three attaching maps, the four-periodic framing calculation, and the cylinder construction for the right Hopf marking. We call the specified boundary coordinates and attaching diffeomorphisms the \emph{marking}. Keeping them fixed connects the curvature construction to the smooth classification.

For $K$, a quaternionic Milnor--Munkres commutator identifies the double disk with the boundary of the plumbing of two copies of $D(TS^5)$. For $C_\eta$, a disk family for the mixed representation and the Morlet--Gromoll comparison identify the class in the full group $\Theta_9$. Changing the normal framing of a tube around a standard $S^3\subset S^8$ then gives
\[
t_m=(m\bmod2)K+\left(\binom m2\bmod2\right)C_\eta,
\]
which identifies the right Hopf tube twist with $K+C_\eta$.

The geometric difficulty for the last class is the simultaneous appearance of left and right multiplication on the three-sphere fiber. A four-form modification moves an order-one connection term from the mixed diagonal block to a horizontal--vertical cross block. The large positive vertical curvature absorbs this term, while concavity of the radius supplies the mixed curvature margin. The resulting cylinder and its two disk fillings construct a metric on the prescribed double directly. The curvature estimates and boundary comparisons are proved here, as is the local smoothing lemma in \cref{app:localgluing}; its conclusion agrees with \cite{RW}. The compatible-disk theorem of \cite{DHZ} and the smoothing theorem of \cite{RW} are therefore comparisons rather than existence inputs.

The paper is arranged as follows. In \cref{sec:obstruction} we recall the smooth classification and prove the index obstruction. Sections~\ref{sec:clutching}--\ref{sec:framing} construct and identify the three smooth models. Sections~\ref{sec:bundle}--\ref{sec:gluing} give compatible disk metrics for $K$ and $C_\eta$, with a quantitative lower bound in the Kervaire case. Sections~\ref{sec:bilateral}--\ref{sec:newcaps} prove the marked right Hopf twist theorem and complete the classification. The appendices contain the local smoothing proof, the coordinate derivation of the bilateral curvature blocks, the Kervaire parameter estimates, and reproduction instructions for the supplied algebraic checks.

\paragraph{Notation.}
We write $\Hh$ for the quaternions and identify $S^3$ with the unit quaternions. The symbols $L_q$, $R_q$ and $\Ad_q$ denote left multiplication, right multiplication and conjugation, respectively. All disks are closed unless otherwise indicated, and all disk diffeomorphisms used for gluing are smooth up to the boundary. The notation $\Diff_\partial(D^n)$ means diffeomorphisms equal to the identity on a boundary neighborhood. We assign opposite boundary orientation conventions to the two disks in $D^n\cup_fD^n$, so that an orientation-preserving coordinate map $f$ gives an oriented closed manifold. The stable Hopf element is denoted by $\eta$; scalar cutoff functions will be denoted by $\chi$ with subscripts.
For reference, the principal symbols used in the topology and the metric constructions are collected below. Radius profiles such as $F,r_N,r_S$ are defined separately for each construction.
\begin{center}
\small\renewcommand{\arraystretch}{1.12}
\begin{tabularx}{\textwidth}{@{}p{37mm}>{\raggedright\arraybackslash}X@{}}
\toprule
Symbol & Meaning \\
\midrule
$K,C_\eta,H$ & The three generators of $\Theta_9$ in \eqref{eq:Theta9}.\\
$\alpha,\eta,\Hh$ & The real Dirac index, the stable Hopf element, and the quaternions.\\
$\mathsf T(q),\mathcal V_z$ & The normal-framing map in \eqref{eq:framedtwists} and the infinitesimal action in \eqref{eq:Kmap}.\\
$\mathsf A,\Omega,M_0,M_1$ & A principal connection potential, its curvature, and the fixed norms of $\Omega$ and $D\Omega$.\\
$\tau_0,\varrho_\partial$ & The latitude of the Kervaire seam and its squared auxiliary-fiber radius.\\
$A_K,B_K,\varepsilon_K$ & The concavity constant, norm scale, and small parameter in \eqref{eq:parameters}.\\
$A_I,B_N,\varepsilon_I$ & The cylinder parameter, left-cap parameter, and common small parameter in Sections~\ref{sec:cylinder}--\ref{sec:newcaps}.\\
$R,\mathcal R,\xi^{\mathrm h}$ & The curvature tensor, curvature operator, and horizontal bivector component.\\
\bottomrule
\end{tabularx}
\end{center}

\section{Homotopy nine-spheres and the index obstruction}\label{sec:obstruction}
We first recall the topological information needed in the proof. It reduces the existence problem to three explicit constructions.

\subsection{The eight smooth types}
The group $\Theta_9$ has eight elements, and each element has order two \cite{KM,Sarkar}. Its subgroup $bP_{10}$ consists of homotopy spheres bounding parallelizable ten-manifolds and is generated by the nonstandard Kervaire sphere:
\[
bP_{10}=\{0,K\},\qquad
K=\left[\partial\bigl(D(TS^5)\plumb D(TS^5)\bigr)\right].
\]
The real Dirac index $\alpha:\Theta_9\to KO_9\cong\Z/2$ is surjective \cite{Hitchin,CS}. Hence its kernel has four elements. The identification of $C_\eta=\eta E$ in \cref{prop:mixedclass} will give the basis in \eqref{eq:Theta9}; in that basis,
\begin{equation}\label{eq:abc-index}
\alpha(aK+bC_\eta+cH)=c,\qquad a,b,c\in\{0,1\}.
\end{equation}
Curvature existence is unchanged by a diffeomorphism or by reversing orientation. Thus it is enough to construct one metric on a representative of each relevant class.

\subsection{The Dirac-index obstruction}
\begin{proposition}\label{prop:nonnegative-obstruction}
A homotopy nine-sphere with nonzero real Dirac index admits no smooth metric with $\sec\ge0$.
\end{proposition}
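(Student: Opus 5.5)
The plan is to argue by contradiction. Suppose $\Sigma$ is a homotopy nine-sphere with $\alpha(\Sigma)\neq0$ and $g$ is a smooth metric with $\sec_g\ge0$. The idea is to deform $g$ to a metric of positive scalar curvature and then apply the Hitchin obstruction \cite{Hitchin}: on a closed spin manifold carrying a metric of positive scalar curvature, the Lichnerowicz formula forces the Clifford-linear Dirac operator to have no kernel, so $\alpha$ vanishes. Since $\Sigma$ is simply connected and $H^2(\Sigma;\Z/2)=0$, it has a unique spin structure, so $\alpha(\Sigma)$ is defined and is an invariant of the oriented smooth type.

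The key steps, in order, are as follows.
\begin{enumerate}
\item \textbf{Positive Ricci curvature at a point.} Since $\Sigma$ is simply connected and closed, the Cheeger--Gromoll splitting theorem applies to $\sec\ge0$. If $\mathrm{Ric}_g\equiv0$, then $\sec\ge0$ together with vanishing Ricci forces $\sec\equiv0$, so $(\Sigma,g)$ is flat. A closed flat manifold has infinite fundamental group, or is a torus quotient, which contradicts $\pi_1=0$. Equivalently, a closed simply connected flat manifold does not exist in positive dimension. Hence $\Scal_g\ge0$ with $\Scal_g>0$ somewhere.
\item \textbf{Deformation to positive scalar curvature.} Use the standard fact (Kazdan--Warner, or Aubin/Bourguignon) that a metric with $\Scal\ge0$ and $\Scal\not\equiv0$ on a closed manifold can be conformally deformed to one with $\Scal>0$. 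Concretely, the first eigenvalue of the conformal Laplacian $-\tfrac{4(n-1)}{n-2}\Delta+\Scal$ is positive, and its first eigenfunction gives the conformal factor.
\item \textbf{Conclusion.} The Lichnerowicz formula $D^2=\nabla^*\nabla+\Scal/4$ now gives $\ker D=0$, hence $\alpha(\Sigma)=0$, which is a contradiction.
\end{enumerate}

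The Ricci-flat case needs a little care. Under $\sec\ge0$, every sectional curvature is a nonnegative summand of the Ricci curvature, so $\mathrm{Ric}\equiv0$ forces $\sec\equiv0$. Bieberbach theory, or simply the fact that the universal cover of a complete flat manifold is $\R^n$ and so cannot be compact, then finishes this case.

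The only point I expect to need care is citing the correct index-theoretic input: that $\alpha$ is exactly the invariant of \cite{Hitchin} computed in $KO_9\cong\Z/2$, the same one used in \eqref{eq:abc-index}. Everything else is classical and short. As an alternative to the conformal argument, one could use Böhm--Wilking to deform $\sec\ge0$ with $\mathrm{Ric}>0$ somewhere into $\mathrm{Ric}>0$, but the conformal route is simpler.
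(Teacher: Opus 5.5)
Your proof is correct, but it takes a different route from the paper's.

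\textbf{Your route.} You first show $\Scal_g\not\equiv0$: otherwise $\sec\ge0$ forces flatness, which is impossible on a closed simply connected manifold. You then apply the Kazdan--Warner conformal deformation, using the positive first eigenvalue of the conformal Laplacian, to obtain a metric of positive scalar curvature. Only at that point do you invoke the Lichnerowicz vanishing theorem in its standard form ``$\Scal>0\Rightarrow\alpha=0$''.

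\textbf{The paper's route.} The paper never leaves the given metric. Since $\alpha(\Sigma)\ne0$, the metric $g$ itself carries a nonzero harmonic spinor $\psi$. Integrating the Schr\"odinger--Lichnerowicz formula with $\Scal_g\ge0$ then forces $\nabla\psi=0$ and $\Scal_g\equiv0$. After that, $\sec\ge0$ gives flatness, and the argument ends with the same contradiction you use.

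\textbf{What each buys.} The paper replaces your PDE input (existence and positivity of the first eigenfunction of the conformal Laplacian) with the index-theoretic fact that nonzero $\alpha$ produces harmonic spinors for every metric. That fact is the same input you use in contrapositive form. The result is a one-line Bochner argument, and it gives a slightly stronger intermediate conclusion: any metric with $\Scal\ge0$ on such a sphere is scalar-flat and carries a parallel spinor. Your version is more modular, since it uses only the most commonly quoted form of Hitchin's theorem as a black box.

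One minor point: your appeal to the Cheeger--Gromoll splitting theorem in the first step is never used and can be deleted. The flat case needs only that a closed simply connected flat manifold would be isometric to $\R^n$.
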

\begin{proof}
For a homotopy nine-sphere $\Sigma$, the vanishing of $H^2(\Sigma;\Z/2)$ gives $w_2(T\Sigma)=0$, so $\Sigma$ is spin. Its spin structure is unique since $H^1(\Sigma;\Z/2)=0$. The real index
\[
\alpha(\Sigma)\in KO_9\cong\Z/2
\]
is independent of the Riemannian metric. If it is nonzero, the Dirac operator for every metric has nonzero kernel \cite{Hitchin,CS}. In dimension $1\pmod8$, the index takes values in $\Z/2$.

Suppose $\sec_g\ge0$. Choose a nonzero harmonic spinor $\psi$. The Schr\"odinger--Lichnerowicz formula gives
\begin{equation}\label{eq:SL}
0=\int_\Sigma\left(|\nabla\psi|^2+\frac{\Scal_g}{4}|\psi|^2\right)\,dV_g.
\end{equation}
Since $\Scal_g\ge0$, both terms vanish. Hence $\psi$ is parallel; its nonzero norm is constant, and $\Scal_g\equiv0$. For any orthonormal frame,
\[
\Scal_g=2\sum_{i<j}\sec_g(e_i,e_j).
\]
A sum of nonnegative summands can vanish only if all summands vanish. Extending any chosen two-plane to an orthonormal frame therefore shows that every sectional curvature is zero. The metric is flat. A closed simply connected flat manifold would have to be isometric to its Euclidean universal cover, which is noncompact. This contradiction proves the proposition.
\end{proof}

\subsection{The remaining existence problem}
The proposition excludes the four classes with nonzero $H$-component. The standard sphere has its round metric. Thus the main theorem will follow once positive metrics have been constructed on $K$, $C_\eta$ and $K+C_\eta$. The following table records the models used for this purpose.
\begin{table}[htbp]
\centering\small
\renewcommand{\arraystretch}{1.18}
\begin{tabularx}{\textwidth}{@{}p{22mm}c >{\raggedright\arraybackslash}X >{\raggedright\arraybackslash}p{41mm}@{}}
\toprule
Class & $\alpha$ & Smooth model and construction & Result used \\
\midrule
$0$ & $0$ & Standard sphere; round metric. & Constant curvature $1$.\\
$K$ & $0$ & Quaternionic commutator; two quotient disks. & \Cref{thm:topology,thm:positive}; direct construction.\\
$C_\eta$ & $0$ & Mixed representation; compatible disks. & \Cref{prop:mixedclass,prop:mixedmetric,cor:mixedpositive}; direct construction.\\
$K+C_\eta$ & $0$ & Right Hopf tube twist; cylinder and two caps. & \Cref{cor:rightclass,thm:lastpositive}; direct construction.\\
$H+\ker\alpha$ & $1$ & The four remaining smooth types. & No $\sec\ge0$, by \cref{prop:nonnegative-obstruction}.\\
\bottomrule
\end{tabularx}
\caption{The eight smooth types, their models, and the dependence of each curvature conclusion.}\label{tab:types}
\end{table}

\section{A quaternionic model for the Kervaire sphere}\label{sec:clutching}
We begin with the class $K$. The plumbing description gives its smooth type, while the disk method requires an equivariant attaching map. The purpose of this section is to obtain both descriptions from the same quaternionic formula.

Consider the orthogonal representation
\begin{equation}\label{eq:representation}
V=\R\oplus\Hh^2,\qquad
\rho(q)(t,u,v)=(t,qu,qv),\quad q\in S^3.
\end{equation}
For a smooth map $\beta:S(V)\to S^3$ satisfying
\begin{equation}\label{eq:betaeq}
\beta(\rho(q)z)=q\beta(z)q^{-1},
\end{equation}
define $J_\beta(z)=\rho(\beta(z))z$. We shall construct such a map with the following property.

\begin{theorem}[Explicit smooth realization]\label{thm:topology}
There is a smooth map $\beta:S^8\to S^3$, given explicitly in \cref{sec:clutching}, which satisfies \eqref{eq:betaeq} for \eqref{eq:representation}. The map $J_\beta$ is an orientation-preserving diffeomorphism, and its double-disk manifold is diffeomorphic to
\begin{equation}\label{eq:Kdefinition}
D_N^9\cup_{J_\beta}D_S^9
\cong
\partial\bigl(D(TS^5)\plumb D(TS^5)\bigr)=\Kerv.
\end{equation}
The latter is the nonstandard Kervaire nine-sphere.
\end{theorem}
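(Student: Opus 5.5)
Write $z=(t,u,v)\in S^8\subset\R\oplus\Hh^2$. The plan is to take $\beta$ as an explicit quaternionic commutator built from $u$ and $v$, in the spirit of the Milnor--Munkres pairing. The key observation is that a commutator of the form $u\,v\,u^{-1}v^{-1}$, suitably normalized and interpolated so that it is defined everywhere on $S^8$, transforms under $u\mapsto qu$, $v\mapsto qv$ as
\[
qu\,qv\,u^{-1}q^{-1}\,v^{-1}q^{-1}.
\]
This is not conjugation by $q$. I therefore expect to use a modified expression such as
\[
\beta(z)=\frac{\phi(z)\,(u\bar v)\,\psi(z)\,(v\bar u)+\dots}{|\cdot|}
\]
that is $\Ad$-covariant. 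Here $u\bar v$ transforms as $q(u\bar v)q^{-1}$, because $\overline{qv}=\bar v\bar q$. So any map $\beta=\Phi(t,|u|,|v|,u\bar v)$, where $\Phi$ is an $\Ad$-equivariant map $\Hh\to S^3$ with coefficients depending on the invariants, satisfies \eqref{eq:betaeq}. Concretely, I would take
\[
\beta(z)=\exp\bigl(\pi\,\chi(t,|u|,|v|)\,\mathrm{Im}(u\bar v)/|\mathrm{Im}(u\bar v)|\bigr),
\]
or, more smoothly, a normalization of $a(t,|u|,|v|)+\mathrm{Im}(u\bar v)$. The cutoff is chosen so that $\beta\equiv1$ near the poles $t=\pm1$ and near the loci $u=0$ or $v=0$. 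This gives smoothness.

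\textbf{Step 1 (diffeomorphism).} Because $\beta$ is constant on $\rho$-orbits up to conjugation, $J_\beta$ has an explicit inverse, namely $z\mapsto\rho(\beta(z))^{-1}z$. Indeed, writing $w=\rho(\beta(z))z$, we get
\[
\beta(w)=\beta(z)\beta(z)\beta(z)^{-1}=\beta(z),
\]
so $\rho(\beta(w))^{-1}w=z$. Hence $J_\beta$ is a diffeomorphism. It is isotopic to the identity through $J_{\beta_s}$ locally, and degree considerations show that it preserves orientation.

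\textbf{Step 2 (identify the twisted sphere).} Next I identify the class of $D^9\cup_{J_\beta}D^9$ in $\Theta_9\cong\pi_0\Diff_\partial(D^8)$. After restricting to the hemisphere $t\le 0$ and straightening near the boundary, $J_\beta$ becomes an element of $\Diff_\partial(D^8)$. On $D^8\cong D^4\times D^4$ with coordinates $(u,v)$ it has the form $(u,v)\mapsto(\beta u,\beta v)$, with $\beta$ a function of $u\bar v$. The strategy is to compare this with the Milnor--Munkres pairing
\[
\pi_4\SO(4)\otimes\pi_4\SO(4)\to\pi_0\Diff_\partial(D^8),
\]
or more precisely with the pairing of $\pi_3$-type clutching data, $\tau_{a,b}(u,v)=(u,\,a(u)v)$ composed with $(b(v)u,\,v)$ and their commutator. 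The commutator of the two shears $(u,v)\mapsto(u,\,\sigma(u)v)$ and $(u,v)\mapsto(\sigma(v)u,\,v)$ is the standard plumbing twist. Known results (Milnor--Munkres and Novikov--Lashof for plumbings) identify the resulting twisted sphere with the boundary of the plumbing of the two disk bundles classified by the clutching elements. Taking the generator of $\pi_4\SO(5)$ that classifies $TS^5$ then gives $\partial(D(TS^5)\plumb D(TS^5))$. I would exhibit an isotopy in $\Diff_\partial(D^8)$ from $J_\beta$ to this commutator by deforming the profile of $\beta$ radially.

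\textbf{Step 3 (nonstandardness).} Finally, nonstandardness follows from the classical fact that the Kervaire sphere in dimension $9$ is exotic, since the Kervaire invariant vanishes in dimension $10$.

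The main obstacle is Step 2: matching the equivariant, $\Ad$-covariant formula with the commutator of shears up to isotopy rel boundary, and tracking the clutching class so that it is exactly $TS^5$ rather than a trivial bundle. I would attempt this by factoring $\rho(\beta(z))$ through the two coordinate shears and checking that the induced element of $\pi_4\SO(5)$ is the generator.
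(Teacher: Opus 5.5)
There is a genuine gap in the construction of $\beta$. Your Steps 1 and 3 are fine, but no map of the form you propose can produce the Kervaire sphere, so the isotopy planned in Step 2 does not exist.

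Suppose $\beta(z)=\Phi_{I(z)}(u\bar v)$, with each $\Phi_I$ an $\Ad$-equivariant map $\Hh\to S^3$. Then $\beta(z)$ commutes with every $q$ that fixes $u\bar v$ under conjugation.

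\begin{itemize}
\item \emph{The degenerate set.} It follows that $\beta=\pm1$ wherever $u\bar v$ is real. That is the whole set $\Sigma$ where $u$ and $v$ are $\R$-linearly dependent, not only the set where $u=0$ or $v=0$. Since $\Sigma$ meets every level set of $(t,|u|,|v|)$, your first formula forces $\chi\in\Z$ and hence $\beta\equiv\pm1$. In your second formula, $a$ can never vanish, so $\beta$ never takes the value $-1$.
\item \emph{An equivariant logarithm.} Off $\Sigma$ one has $\beta=\cos\theta+\sin\theta\,\nu$ with $\nu=\im(u\bar v)/|\im(u\bar v)|$. The set $\Sigma$ has codimension three in $S^8$ minus the poles, so its complement is simply connected. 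Hence $\theta$ lifts to a real function, which tends to a single constant $2\pi k$ along $\Sigma$ and near the poles. Then $X=(\theta-2\pi k)\nu$ is a continuous equivariant logarithm of $\beta$ that vanishes near the poles.
\item \emph{Consequence.} The path $\exp(sX)$, smoothed relative to its ends, gives an isotopy $J_{\exp(sX)}$ from the identity to $J_\beta$. So $D^9\cup_{J_\beta}D^9$ is the standard $S^9$. Equivalently, the latitude loop of $\beta$ is trivial in $\pi_1$ of the gauge group of the Hopf bundle $S^7\to S^4$, and that group is $\pi_5S^3=\Z/2$.
\end{itemize}

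The missing idea is non-abelian. The commutator you discarded becomes equivariant once you first replace $u,v$ by conjugation-equivariant unit quaternions.

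\begin{itemize}
\item \emph{The equivariant ingredient.} The paper uses the suspended Hopf map $a(x)=\cos\vartheta(|x|)+\sin\vartheta(|x|)\,xi\bar x/|x|^2$. It equals $-1$ near $0$, equals $1$ for $|x|\ge1$, and satisfies $a(qx)=qa(x)q^{-1}$.
\item \emph{The map $\beta$.} It sets $\beta(t,u,v)=C(x,y)=a(x)a(y)a(x)^{-1}a(y)^{-1}$ in the stereographic coordinates $x=u/(1-t)$, $y=v/(1-t)$. The two factors point along $xi\bar x$ and $yi\bar y$, which generically do not commute. A single direction field such as $\im(u\bar v)$ cannot produce this.
\item \emph{Step 2 becomes an identity.} Equivariance is immediate and $\beta\equiv1$ for $|t|\ge1/3$. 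The comparison with Milnor--Munkres is then an exact identity rather than an isotopy to be found:
\[
G^{-1}F^{-1}GF(x,y)=(Cx,Cy),\qquad F(x,y)=(x,a(x)y),\quad G(x,y)=(a(y)x,y).
\]
\item \emph{The plumbing.} Since $\bar a$ is the characteristic map of $\SU(3)\to S^5$, the stabilization $\operatorname{diag}(1,L_{\bar a})$ clutches $TS^5$. The Milnor--Munkres plumbing correspondence then gives $\partial(D(TS^5)\plumb D(TS^5))$.
\end{itemize}
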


The construction starts with the suspended Hopf map. Its quaternionic equivariance makes the associated commutator easy to compute, and its stabilization is the clutching class of $TS^5$.

\subsection{A smooth suspended Hopf representative}
For $s\in\R$, define
\[
E_0(s)=\begin{cases}e^{-1/s},&s>0,\\0,&s\le0,\end{cases}
\qquad
\mathfrak h(s)=\frac{E_0(s)}{E_0(s)+E_0(1-s)}.
\]
The denominator is everywhere positive. Put
\begin{equation}\label{eq:cutoff}
\chi(s)=1-\mathfrak h\!\left(\frac{4s-1}{3}\right),
\qquad \vartheta(r)=\pi\chi(r^2).
\end{equation}
Then $\chi=1$ for $s\le1/4$ and $\chi=0$ for $s\ge1$. Fix a unit imaginary quaternion $i$. Define
\begin{equation}\label{eq:amap}
a(x)=
\begin{cases}
\displaystyle\cos\vartheta(|x|)+\sin\vartheta(|x|)\frac{x i\overline x}{|x|^2},&x\ne0,\\[4pt]
-1,&x=0.
\end{cases}
\end{equation}

\begin{lemma}\label{lem:amap}
The map $a:\Hh\to S^3$ is smooth, equals $-1$ on $|x|\le1/2$, and equals $1$ on $|x|\ge1$. It satisfies
\begin{equation}\label{eq:aeq}
a(qx)=q a(x)q^{-1},\qquad q\in S^3.
\end{equation}
Its one-point compactification $\bar a:S^4\to S^3$ represents the suspended Hopf element.
\end{lemma}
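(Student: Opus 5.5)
The plan is to verify the four claims in order: smoothness, the constant values, equivariance, and the homotopy class.

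\emph{Constant values.} By construction $\chi(s)=1$ for $s\le 1/4$ and $\chi(s)=0$ for $s\ge1$. Hence $\vartheta(r)=\pi$ when $r\le1/2$ and $\vartheta(r)=0$ when $r\ge1$. On $0<|x|\le1/2$ the formula gives $a(x)=\cos\pi=-1$, which agrees with the value prescribed at $x=0$. On $|x|\ge1$ it gives $a(x)=\cos0=1$. The unit-norm claim is a short check. The element $u(x)=xi\overline x/|x|^2$ is a unit imaginary quaternion, since conjugation by $x/|x|$ preserves $\im\Hh$ and the norm. Therefore $a(x)=\cos\vartheta+\sin\vartheta\,u$ has norm $1$.

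\emph{Smoothness.} On $x\ne0$, the map $a$ is a composition of smooth maps: $r^2=|x|^2$ is a polynomial, $\chi$ is smooth because $E_0$ is smooth and the denominator $E_0(s)+E_0(1-s)$ is positive, and $xi\overline x/|x|^2$ is smooth. On the open ball $|x|<1/2$ the map is constantly $-1$. These two open sets cover $\Hh$, so $a$ is smooth. This avoids any analysis of the apparent singularity of $u$ at the origin.

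\emph{Equivariance.} For $q\in S^3$ we have $|qx|=|x|$, so the angle $\vartheta$ is unchanged. Moreover
\[
(qx)i\overline{(qx)}=q\,(xi\overline x)\,\overline q=q\,(xi\overline x)\,q^{-1}.
\]
Since conjugation fixes the real scalars $\cos\vartheta$ and $\sin\vartheta$, this gives $a(qx)=q\,a(x)\,q^{-1}$ for $x\ne0$. At $x=0$ the identity holds because $-1$ is central.

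\emph{Homotopy class.} This step carries the real content. The map $a$ equals $1$ outside the unit ball, so it extends to $\bar a:S^4=\Hh\cup\{\infty\}\to S^3$ with $\bar a(\infty)=1$. The plan is to write $S^4$ as the suspension of the unit sphere $S^3\subset\Hh$ via radial coordinates $x=rp$, with $r\in[0,\infty]$ and $|p|=1$. The suspension coordinate is the angle $\vartheta(r)$, which decreases monotonically from $\pi$ to $0$ on $[1/2,1]$ and is constant outside that interval. Write $S^3=\{\cos\theta+\sin\theta\,w : w\in S^2\subset\im\Hh\}$, which is the unreduced suspension of $S^2$. In these coordinates
\[
\bar a(rp)=\cos\vartheta(r)+\sin\vartheta(r)\,h(p),\qquad h(p)=pi\overline p.
\]
The map $h:S^3\to S^2$ is the Hopf map, so $\bar a$ is the unreduced suspension of $h$, reparametrized by the degree-one function $\vartheta$. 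The parameter $r=0$ maps to the suspension point $-1$ and $r=\infty$ maps to $+1$.

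The main obstacle is making this identification rigorous. First, the collapse of $[0,1/2]$ and $[1,\infty]$ to the cone points must be handled; a monotone reparametrization of the suspension coordinate is homotopic to the standard one, so it does not change the class. Second, one needs a sign and orientation convention under which ``the suspended Hopf element'' means $\Sigma h$ and not $-\Sigma h$. Since $\pi_4(S^3)\cong\Z/2$, the orientation ambiguity disappears, and the class of $\bar a$ is the nontrivial element $\Sigma\eta$.
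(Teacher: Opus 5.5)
Your proposal is correct and follows essentially the same route as the paper: smoothness at the origin comes from the constant neighborhood $|x|<1/2$, equivariance from $(qx)i\overline{(qx)}=q(xi\overline x)q^{-1}$ and centrality of $-1$, and the homotopy class from recognizing $\bar a$ as a reparametrized unreduced suspension of the Hopf map $p\mapsto pi\overline p$. As in the paper, the orientation and reparametrization ambiguity (your $\vartheta$ is actually decreasing, so ``degree-one'' is loose) is harmless because $\pi_4(S^3)\cong\Z/2$; the paper's extra remark identifying this with the characteristic class of $\SU(3)\to S^5$ is only used later, not in the lemma.
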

\begin{proof}
For $x\ne0$, the quaternion $xi\overline x/|x|^2$ is purely imaginary of norm one, so \eqref{eq:amap} has norm one. The apparent division by $|x|^2$ causes no singularity, since the full expression is the constant $-1$ on an entire neighborhood of the origin. It is constant $1$ outside the unit ball, so it also extends smoothly across the compactification point. Norm preservation and $(qx)i\overline{qx}=q(xi\overline x)q^{-1}$ prove \eqref{eq:aeq}.

Write $x=r\omega$ with $\omega\in S^3$. Then
\[
a(r\omega)=\cos\vartheta(r)+\sin\vartheta(r)\,\omega i\omega^{-1}.
\]
The angular map is the Hopf fibration $S^3\to S^2$, and the radial variable joins its two constant suspension ends. The constant end neighborhoods are a smooth reparameterization of the suspension. Reversing the suspension coordinate does not change the nonzero element of $\pi_4(S^3)\cong\Z/2$. This is the characteristic class of $\SU(3)\to S^5$ described in \cite[p.~5]{Puttmann}.
\end{proof}

\subsection{A compactly supported commutator on \texorpdfstring{$\Hh^2$}{the quaternionic plane}}
Set
\begin{equation}\label{eq:Cmap}
C(x,y)=a(x)a(y)a(x)^{-1}a(y)^{-1}.
\end{equation}
If either variable has norm at least one, one factor is $1$. If either variable has norm at most $1/2$, one factor is the central element $-1$. In both cases $C(x,y)=1$. Thus $C$ is the constant identity off a compact subset of $\Hh^2$; ``support'' below means the support of its difference from the identity.

For $z=(t,u,v)\in S(\R\oplus\Hh^2)$ and $t\ne1$, use stereographic coordinates
\begin{equation}\label{eq:stereo}
x=\frac{u}{1-t},\qquad y=\frac{v}{1-t}.
\end{equation}
The inverse has
\[
t=\frac{R^2-1}{R^2+1},\qquad
u=\frac{2x}{R^2+1},\qquad
v=\frac{2y}{R^2+1},\qquad R^2=|x|^2+|y|^2.
\]
Define
\begin{equation}\label{eq:explicitbeta}
\beta(t,u,v)=C\!\left(\frac{u}{1-t},\frac{v}{1-t}\right)\quad(t\ne1),
\qquad \beta(1,0,0)=1.
\end{equation}

\begin{lemma}\label{lem:explicitbeta}
The map \eqref{eq:explicitbeta} is smooth and satisfies \eqref{eq:betaeq}. It is identically $1$ whenever $|t|\ge1/3$. Its reentrance map $J_\beta$ is a diffeomorphism; in \eqref{eq:stereo} it is
\begin{equation}\label{eq:Psi}
\Psi(x,y)=(C(x,y)x,C(x,y)y).
\end{equation}
\end{lemma}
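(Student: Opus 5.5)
We need to prove Lemma \ref{lem:explicitbeta}: $\beta$ is smooth and satisfies \eqref{eq:betaeq}, $\beta\equiv1$ for $|t|\ge1/3$, and $J_\beta$ is a diffeomorphism given in stereographic coordinates by \eqref{eq:Psi}. The argument has four parts: support, smoothness, equivariance, and the diffeomorphism property.

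\emph{Support and smoothness.} We first locate where $C$ differs from $1$. On the support we need $1/2<|x|<1$ and $1/2<|y|<1$, so $1/2<R^2<2$. Hence $t=(R^2-1)/(R^2+1)$ lies strictly in $(-1/3,1/3)$. Conversely, if $|t|\ge1/3$ and $t\ne1$, then $R^2\le1/2$ or $R^2\ge2$. In the first case one of $|x|,|y|$ is at most $1/2$. In the second case one of them is at least $1$. Either way $C=1$ by the remarks after \eqref{eq:Cmap}.

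Smoothness then follows. On the open set $t\ne1$, $\beta$ is a composition of smooth maps, using \cref{lem:amap}. On the open set $t>1/3$ it is constant $1$, which agrees with the value at $(1,0,0)$. These two open sets cover $S^8$.

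\emph{Equivariance.} The stereographic map is $S^3$-equivariant: it sends $(t,qu,qv)$ to $(qx,qy)$, and $t$ is fixed. By \eqref{eq:aeq}, each factor of $C$ transforms by conjugation, so $C(qx,qy)=qC(x,y)q^{-1}$. At the pole the identity $q\cdot1\cdot q^{-1}=1$ holds trivially. This gives \eqref{eq:betaeq}.

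\emph{Formula for $J_\beta$.} By definition $J_\beta(t,u,v)=(t,\beta u,\beta v)$. Since $|\beta u|=|u|$ and $|\beta v|=|v|$, this stays on the sphere and preserves $t$. Dividing by $1-t$ gives \eqref{eq:Psi}.

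\emph{Diffeomorphism property.} This is the main step: we produce an explicit smooth inverse. Write $\Psi(x,y)=(x',y')=(Cx,Cy)$. Then
\[
a(x')=a(Cx)=C\,a(x)\,C^{-1},\qquad a(y')=C\,a(y)\,C^{-1},
\]
so conjugation carries the commutator along:
\[
C(x',y')=C\,C(x,y)\,C^{-1}=C(x,y).
\]
Thus $C$ is invariant under $\Psi$. It follows that
\[
\Phi(x,y)=\bigl(C(x,y)^{-1}x,\;C(x,y)^{-1}y\bigr)
\]
satisfies $\Phi\circ\Psi=\mathrm{id}$. The same conjugation argument with $C^{-1}$ shows $C\circ\Phi=C$, so $\Psi\circ\Phi=\mathrm{id}$ as well. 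Equivalently, $\Phi$ corresponds to $z\mapsto\rho(\beta(z))^{-1}z$. Both maps are smooth and equal to the identity near the pole and on $|t|\ge1/3$, so $J_\beta$ is a diffeomorphism of $S^8$.

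For orientation, note that $J_\beta$ is isotopic to the identity through the maps $\rho(\gamma_s(\beta(z)))z$, where $\gamma_s$ contracts $S^3$ radially along geodesics from $1$. The cleaner approach is to take the degree: $J_\beta$ has degree $+1$ because it is the identity on an open set and is a diffeomorphism.
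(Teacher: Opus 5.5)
Your proof is correct and follows essentially the same route as the paper. Both use the support computation via $R^2\in(1/2,2)$, equivariance from \eqref{eq:aeq}, the invariance $\beta(J_\beta(z))=\beta(z)$ giving $J_\beta^{-1}=J_{\beta^{-1}}$, and a positive orientation sign because $J_\beta$ is the identity near a pole. You should delete the isotopy aside: no continuous radial contraction of $S^3$ onto $1$ exists (it is undefined at $-1$, and $S^3$ is not contractible), so only your degree argument should stand.
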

\begin{proof}
Smoothness at the north pole follows because $C=1$ near infinity. Equation \eqref{eq:aeq} gives $C(qx,qy)=qC(x,y)q^{-1}$, and stereographic projection commutes with the left action, proving equivariance. If $t\ge1/3$, then $R^2\ge2$ and at least one variable has norm at least one. If $t\le-1/3$, then $R^2\le1/2$ and at least one variable has norm at most $1/2$. This proves the asserted constant regions.

Equivariance with $q=\beta(z)$ gives $\beta(J_\beta(z))=\beta(z)$. Hence $J_\beta^{-1}=J_{\beta^{-1}}$. The map is the identity near a pole, so its local orientation sign, which is constant on the connected sphere, is positive. It fixes $t$ and left-multiplies both quaternionic coordinates; \eqref{eq:Psi} follows.
\end{proof}

\subsection{Exact identification with the Milnor--Munkres diffeomorphism}
Define
\begin{equation}\label{eq:FG}
F(x,y)=(x,a(x)y),\qquad G(x,y)=(a(y)x,y).
\end{equation}
Their inverses are obtained by replacing $a$ with $a^{-1}$ in the multiplied coordinate. Neither map is required to be compactly supported on the full product.

\begin{proposition}\label{prop:commutator}
With composition read from right to left,
\begin{equation}\label{eq:exactcommutator}
G^{-1}\circ F^{-1}\circ G\circ F=\Psi.
\end{equation}
\end{proposition}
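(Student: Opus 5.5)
The identity \eqref{eq:exactcommutator} is a pointwise computation that uses only the equivariance \eqref{eq:aeq} from \cref{lem:amap}. I will apply the four maps to a point $(x,y)\in\Hh^2$ one at a time, from right to left. At each stage the new point is a unit quaternion times the old coordinates. Each time $a$ is evaluated at such a point $(hw)$ with $h\in S^3$, I replace $a(hw)$ by $h\,a(w)\,h^{-1}$ using \eqref{eq:aeq}. This keeps every expression as a word in $a(x)^{\pm1}$ and $a(y)^{\pm1}$ acting on $x$ or $y$. No smoothness or support questions arise, because $a$ is defined everywhere on $\Hh$ and \eqref{eq:aeq} holds for all $x$, including $x=0$, where both sides equal $-1$.

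\textbf{Step 1.} Compute
\[
G\circ F(x,y)=\bigl(a(a(x)y)\,x,\;a(x)y\bigr)=\bigl(g\,x,\;a(x)y\bigr),\qquad g:=a(x)a(y)a(x)^{-1}.
\]
Here I used $a(a(x)y)=a(x)a(y)a(x)^{-1}$.

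\textbf{Step 2.} Apply $F^{-1}(x_1,y_1)=(x_1,a(x_1)^{-1}y_1)$ with $x_1=gx$. By \eqref{eq:aeq}, $a(gx)=g\,a(x)\,g^{-1}$. The second coordinate therefore becomes
\[
g\,a(x)^{-1}g^{-1}a(x)\,y .
\]
Using $g^{-1}a(x)=a(x)a(y)^{-1}$, this simplifies to $g\,a(y)^{-1}y=C(x,y)\,y$. So after three maps the point is $(gx,\;C y)$, where $C=C(x,y)$.

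\textbf{Step 3.} Apply $G^{-1}(x_2,y_2)=(a(y_2)^{-1}x_2,y_2)$ with $y_2=Cy$. Equivariance gives $a(Cy)=C\,a(y)\,C^{-1}$, so the first coordinate is
\[
C\,a(y)^{-1}C^{-1}g\,x .
\]
The main check is the cancellation
\[
C^{-1}g=a(y)a(x)a(y)^{-1}a(x)^{-1}\cdot a(x)a(y)a(x)^{-1}=a(y).
\]
With it, the first coordinate collapses to $C\,x$. The final point is $(C(x,y)x,\;C(x,y)y)=\Psi(x,y)$, as in \eqref{eq:Psi}.

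The only real obstacle is bookkeeping. One must keep the order of the noncommuting quaternion factors straight. One must also apply equivariance with the correct conjugating element at each stage: $a(x)$, then $g$, then $C$, and never the original arguments. I will display the intermediate points explicitly so that each cancellation can be verified separately.
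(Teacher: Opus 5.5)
Your proposal is correct and is essentially the paper's own proof: the same right-to-left pointwise computation using only the equivariance \eqref{eq:aeq}. Your $g=a(x)a(y)a(x)^{-1}$ is the paper's $k=pqp^{-1}$, and your key cancellation $C^{-1}g=a(y)$ is exactly the paper's identity $C^{-1}k=q$.
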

\begin{proof}
Put $p=a(x)$, $q=a(y)$, $k=pqp^{-1}$, and $C=kq^{-1}$. Equivariance gives
\[
(G\circ F)(x,y)=(a(py)x,py)=(kx,py).
\]
Since $a(kx)=kpk^{-1}$,
\[
(F^{-1}\circ G\circ F)(x,y)
=(kx,kp^{-1}k^{-1}py)=(kx,Cy).
\]
Also $a(Cy)=CqC^{-1}$, so the final application of $G^{-1}$ gives
\[
((CqC^{-1})^{-1}kx,Cy)=(Cx,Cy),
\]
where $C^{-1}k=q$ was used. This is \eqref{eq:Psi}.
\end{proof}
Let $A(x)=L_{a(x)}\in\SO(4)$, where $L_p$ is quaternionic left multiplication. The maps in \eqref{eq:FG} are precisely the two fiber twists in the Milnor--Munkres pairing. The convention in \cite[Section 2.1]{CKS} is $G^{-1}F^{-1}GF$, matching \eqref{eq:exactcommutator}. This equality allows us to apply the plumbing correspondence to the attaching map just constructed.

\subsection{The stabilized clutch is the tangent bundle of \texorpdfstring{$S^5$}{the five-sphere}}
The principal bundle
\[
\SU(2)\longrightarrow\SU(3)\longrightarrow S^5
\]
is projection to the first column. Its characteristic map is the suspended Hopf map \cite{Puttmann}, hence agrees with $\bar a$ in $\pi_4(S^3)$. Under the standard representation of $\SU(2)$ on $\C^2\cong\Hh$, the associated real bundle satisfies
\begin{equation}\label{eq:tangent-association}
\SU(3)\times_{\SU(2)}(\R\oplus\C^2)\cong TS^5.
\end{equation}
Indeed, the map
\[
[B,(s,w)]\longmapsto (Be_1,B(is,w))
\]
is well defined and fiberwise linear. At $z=Be_1$,
\[
T_zS^5=\R\,iz\oplus z_\C^\perp,
\]
so it is a vector-bundle isomorphism. Consequently
\begin{equation}\label{eq:stabilized}
\operatorname{diag}(1,L_{\bar a}):S^4\longrightarrow\SO(5)
\end{equation}
is a clutching map for $TS^5$.

For $m=4$, the Milnor--Munkres/plumbing correspondence \cite[Lemma 2.1 and Corollary 2.2]{CKS} identifies the homotopy sphere defined by the commutator of two such twists with the boundary of the plumbing of their stabilized rank-five disk bundles. Equations \eqref{eq:exactcommutator} and \eqref{eq:stabilized} therefore give
\[
D_N^9\cup_{J_\beta}D_S^9
\cong\partial\bigl(D(TS^5)\plumb D(TS^5)\bigr).
\]
This boundary is nonstandard in dimension nine \cite{CH}. This proves \cref{thm:topology}.

\begin{remark}
The class $K$ has order two. Consequently either choice of overall orientation gives the same element of $\Theta_9$.
\end{remark}

\section{The mixed representation and the class \texorpdfstring{$C_\eta$}{C eta}}\label{sec:mixed}
We next construct a model for the second nonzero class in $\ker\alpha$. The starting point is Speran\c ca's model for the exotic eight-sphere \cite{Speranca}. We express its attaching map as a one-parameter family of diffeomorphisms of a fixed six-disk. Applying the Hopf operation to this family gives a two-parameter family and hence a homotopy nine-sphere. The main point is to identify the resulting class as $\eta E$ in $\Theta_9$.

The mixed representation is useful because, in a suitable chart, its attaching map is simply right multiplication in one $S^3$ coordinate. This will also make it possible to compare its normal framing with the Hopf models in the next section.

\subsection{An explicit equivariant map}
Write a point of $V_*=\R^2\oplus\Hh\oplus\im\Hh$ as $(s,t,h,w)$ and let
\begin{equation}\label{eq:mixedrep}
\rho_*(q)(s,t,h,w)=(s,t,qh,qwq^{-1}).
\end{equation}
Using the map $a$ of \eqref{eq:amap}, define
\begin{equation}\label{eq:cstar}
c_*(s,t,z)=a\bigl(4s+16(t+z)i(t-z)\bigr),\qquad z\in\im\Hh.
\end{equation}
The squared norm of the argument is $16s^2+256(t^2+|z|^2)^2$. Thus the support of $c_*-1$ lies in $s^2+t^2+|z|^2<1/8$. The proper map $(s,t,z)\mapsto4s+16(t+z)i(t-z)$ is a rescaled suspension of the Hopf map. Consequently its one-point compactification represents $\eta_4:S^5\to S^4$, and
\begin{equation}\label{eq:cstarclass}
[c_*]=\eta_3\eta_4\ne0\quad\hbox{in }\pi_5S^3=\Z/2.
\end{equation}
Here $\eta_j:S^{j+1}\to S^j$ denotes the suspended Hopf element. We shall relate this composition to the Bredon action in \cref{lem:hopfcomparison}.

For $h\ne0$, put $u=h/|h|$ and $z=u^{-1}wu$. On $S(V_*)$ set
\begin{equation}\label{eq:betastar}
\beta_*(s,t,h,w)=u c_*(s,t,z)u^{-1},
\end{equation}
and put $\beta_*=1$ when $h=0$. The support bound gives an entire neighborhood of $h=0$ on which the formula is constant, so this extension is smooth. It satisfies
$\beta_*(\rho_*(q)v)=q\beta_*(v)q^{-1}$.
In the free chart
\begin{equation}\label{eq:mixedchart}
\psi_2(u,p,z)=\bigl(p,\sqrt{1-|p|^2-|z|^2}\,u,uzu^{-1}\bigr),\qquad p=(s,t),
\end{equation}
the induced diffeomorphism has the particularly simple form
\begin{equation}\label{eq:mixedtwist}
J_{\beta_*}\psi_2(u,p,z)=\psi_2(uc_*(p,z),p,z).
\end{equation}
Its inverse replaces $c_*$ by $c_*^{-1}$ in this chart and is the identity elsewhere.

\subsection{The auxiliary eight-sphere and a fixed disk group}
Let
\[
b(y)=\cos\vartheta(|y|)+\sin\vartheta(|y|)\frac{y}{|y|},\qquad y\in\im\Hh\setminus\{0\},
\]
with $b(0)=-1$, using the same smooth cutoff as in \eqref{eq:cutoff}. It equals $1$ outside the unit ball and represents degree $\pm1$ after compactification. Define
\[
F_8(x,y)=(x,a(x)ya(x)^{-1}),\qquad G_8(x,y)=(b(y)x,y).
\]
As in \cref{prop:commutator}, equivariance gives
\begin{equation}\label{eq:ecommutator}
G_8^{-1}F_8^{-1}G_8F_8(x,y)=(C_8x,C_8yC_8^{-1}),\qquad C_8=[a(x),b(y)].
\end{equation}
The clutching classes are the generator $\Ad_a\in\pi_4\SO(3)$ and the stabilization of $L_b$ in $\pi_3\SO(5)$. The plumbing interpretation and the eight-dimensional example in \cite[Proposition 4, Corollary 5, and Section 3]{Speranca} identify its compactification with the nonstandard eight-sphere $E$. We use this identification to calibrate the disk family below.

Set $W=\Hh_L\oplus\im\Hh_{\Ad}$. In $S(W)=S^6$ use
\[
\psi(u,z)=\bigl(\sqrt{1-|z|^2}\,u,uzu^{-1}\bigr).
\]
Let $\mathcal G$ be the smooth pointwise multiplication group of maps $d:D^3\to S^3$ that are $1$ for $|z|\ge99/100$. It has the weak homotopy type of $\Map_*(S^3,S^3)$. Define $T_d\psi(u,z)=\psi(ud(z),z)$ and extend by the identity. All these supports lie in a common compact subset avoiding a point of the singular set $h=0$. After one stereographic projection and rescaling, they lie in the interior of one fixed six-disk. Hence
\begin{equation}\label{eq:groupmap}
\mathcal F:\mathcal G\longrightarrow\mathcal D:=\Diff_\partial(D^6),\qquad
\mathcal F(d)=T_{d^{-1}},
\end{equation}
is a continuous group homomorphism: $T_dT_e=T_{ed}$. Using one fixed disk allows us to apply the Gromoll construction to the whole family.

For $j=1,2$, write
\[
\psi_j(u,p,z)=\bigl(p,\sqrt{1-|p|^2-|z|^2}\,u,uzu^{-1}\bigr).
\]
For a function $c(p,z)$ supported away from the parameter boundary, the slice of its tube twist at $p$ is $\mathcal F(\gamma_c(p))$, where
\begin{equation}\label{eq:slicefamily}
\gamma_c(p)(z_0)=c\bigl(p,\sqrt{1-|p|^2}\,z_0\bigr)^{-1}.
\end{equation}
For the functions used here these are maps in $\mathcal G$. The family gives the sphere class $\Gamma_j\mathcal F_*[\gamma_c]$, with
$\Gamma_j:\pi_j\mathcal D\to\Theta_{j+7}$ the disk-family Gromoll map. The quotient map induced by $(p,z_0)\mapsto(p,\sqrt{1-|p|^2}z_0)$ has degree $+1$, so $[\gamma_c]=-[c]$. Both relevant homotopy groups have order two, making this sign immaterial. The family is constant near the parameter boundary; the square root therefore causes no smoothness problem there.

For the eight-dimensional commutator, the latitude $\lambda$ gives the slice $T_{d_\lambda}$, with
\begin{equation}\label{eq:Eslice}
\begin{split}
k(\lambda)&=\frac{\sqrt{1-\lambda^2}}{1-\lambda},\\
d_\lambda(z)&=\bigl[a(k(\lambda)\sqrt{1-|z|^2}),b(k(\lambda)z)\bigr].
\end{split}
\end{equation}
The first argument is a real quaternion. Nonidentity requires both arguments to have norms between $1/2$ and $1$; thus $|\lambda|<1/3$ and $|z|\le\sqrt{8/9}<99/100$. It follows that $g(\lambda)=d_\lambda^{-1}$ is a genuine based loop in $\mathcal G$, with
\begin{equation}\label{eq:Ecalibration}
\Gamma_1\mathcal F_*[g]=E.
\end{equation}
Since $E\ne0$ and $\pi_1\mathcal G=\pi_4S^3=\Z/2$, this loop represents $\eta_3$.

\subsection{The Hopf operation and the Gromoll map}
To identify the mixed sphere, we need the Hopf operation to commute with passage from a disk family to a homotopy sphere. We spell out this compatibility because it determines the $bP_{10}$ component as well as the image in $\operatorname{coker}J_9$.

For a topological group $G$, let $\partial_G:\pi_{j+1}BG\to\pi_jG$ be the boundary isomorphism of its universal bundle. For $x\in\pi_1G$, define
\begin{equation}\label{eq:Qoperation}
\mathcal Q_G(x)=\partial_G\bigl((\partial_G^{-1}x)\circ\eta_2\bigr).
\end{equation}
Naturality of the universal bundle shows that $\mathcal Q_G$ commutes with continuous group homomorphisms.

For the mapping group $\mathcal G$, a compatible classifying-space model is
\[
B\mathcal G\simeq\Map_*(S^3,BS^3)=\Omega^3BS^3.
\]
Indeed, $\Map_*(S^3,ES^3)$ is contractible, and every based map $S^3\to BS^3$ classifies a trivial bundle because $\pi_3BS^3=0$. Under threefold adjunction, precomposition by $\eta_2$ becomes precomposition by $\eta_5$. The universal-bundle boundary then gives
\begin{equation}\label{eq:QG}
\mathcal Q_{\mathcal G}(\eta_3)=\eta_3\eta_4.
\end{equation}

\begin{lemma}[Compatibility of the Hopf operation]\label{lem:hopfcomparison}
Let $\mathcal D=\Diff_\partial(D^6)$ and let $\Gamma_j:\pi_j\mathcal D\to\Theta_{j+7}$ be the disk-family Gromoll map. Then
\begin{equation}\label{eq:Qgromoll}
\Gamma_2\mathcal Q_{\mathcal D}(x)=\eta\,\Gamma_1(x)
\quad\text{in }\Theta_9,\qquad x\in\pi_1\mathcal D.
\end{equation}
\end{lemma}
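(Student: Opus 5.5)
We would prove \eqref{eq:Qgromoll} by passing through the classical identification of the Gromoll map with a clutching construction and then using naturality of the Milnor--Munkres--Novikov pairing. Recall that $\Gamma_j:\pi_j\mathcal D\to\Theta_{j+7}$ factors as
\[
\pi_j\Diff_\partial(D^6)\xrightarrow{\ \cong\ }\pi_{j+1}B\Diff_\partial(D^6)\longrightarrow\pi_0\Diff_\partial(D^{6+j})\cong\Theta_{j+7},
\]
where the second arrow sends a family $\phi:(D^j,\partial)\to(\mathcal D,1)$ to the diffeomorphism $(p,w)\mapsto(p,\phi(p)w)$ of $D^j\times D^6$, read as a disk diffeomorphism after rounding corners. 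On the other hand the Bredon pairing $\eta\cdot\Sigma$ for $\Sigma=\Gamma_1(x)$ is obtained by composing the clutching class of $\Sigma$ in $\pi_0\Diff_\partial(D^8)$, viewed through Cerf--Morlet as a class in $\pi_7$ of a suitable quotient, with $\eta$ on the source sphere. The plan is therefore to express both sides as the same element of $\pi_{\ast}B\Diff_\partial(D^6)$ shifted by the universal-bundle boundary, and check that the Hopf precomposition in \eqref{eq:Qoperation} corresponds to the Bredon action.

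\textbf{Step 1.} Use the Morlet equivalence $B\Diff_\partial(D^n)\simeq\Omega^{n}_0(\Topg(n)/O(n))$ (or, more elementarily, the Gromoll filtration) to identify $\Gamma_j\circ\partial_{\mathcal D}$ with a map $\pi_{j+1}B\mathcal D\to\pi_{j+7}(\Topg/O)\cong\Theta_{j+7}$, which is induced by the sixfold adjunction $\pi_{j+1}\Omega^6_0(\Topg(6)/O(6))\to\pi_{j+7}(\Topg(6)/O(6))\to\pi_{j+7}(\Topg/O)$. Naturality of this map in the source sphere is the key point: precomposition of $S^{j+1}\to B\mathcal D$ by $\eta_{j+1}$ becomes, after adjunction, precomposition by the $6$-fold suspension $\eta_{j+7}$.

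\textbf{Step 2.} For $j=1$, the definition \eqref{eq:Qoperation} with $\eta_2$ thus gives $\Gamma_2\mathcal Q_{\mathcal D}(x)=\Gamma_1(x)\circ\eta_8$ in $\pi_9(\Topg/O)$. Since $\Topg/O$ is an infinite loop space, precomposition by the stable $\eta$ agrees with the action of $\eta\in\pi_1^s$ on $\pi_*(\Topg/O)$. Finally we invoke the standard fact that under $\Theta_n\cong\pi_n(\Topg/O)$ for $n\ge 7$ this module action is the Bredon/Milnor--Munkres pairing. This yields \eqref{eq:Qgromoll}.

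\textbf{Main obstacle.} The delicate step is Step~1: matching the paper's concrete Gromoll map (built from a fixed six-disk and the parameter coordinate) with the adjunction map, up to sign, and checking that the boundary isomorphism $\partial_{\mathcal D}$ and the Morlet delooping are compatible with $\mathcal Q_{\mathcal D}$ rather than with some twisted version. I would first try to verify this directly on the product-disk model, showing that a family over $S^2=D^2/\partial$ obtained by pulling back along $\eta_2$ gives the diffeomorphism of $D^8$ whose clutched sphere is the Hopf-twisted one. Since all groups involved have exponent two, sign ambiguities are harmless, which is what makes the comparison feasible.
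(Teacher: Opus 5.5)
Your proposal is correct and follows essentially the same route as the paper: Morlet's delooping $B\mathcal D\simeq\Omega^6_0(\Topg(6)/O(6))$, then sixfold adjunction turning precomposition by $\eta_2$ into precomposition by $\eta_8$, then stabilization to $\pi_*(\Topg/O)$, and finally identification of that precomposition with the Bredon pairing (not the Milnor--Munkres--Novikov pairing, which you name at the start but never actually use). The step you flag as the main obstacle is that this composite really is the concrete Gromoll map $\Gamma_j$; the paper does not verify it by hand either, but imports it from the stabilization compatibility in \cite[Theorem 1.3]{Burghelea} and the Morlet--Gromoll diagram of \cite[Section 2.4]{Wang}, with \cite[Theorem 1.5]{Burghelea} supplying the Bredon identification.
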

\begin{proof}
Put $Y=\Topg(6)/O(6)$ and let $s:Y\to\Topg/O$ be stabilization. We use the delooped smoothing-section comparison, rather than choosing an arbitrary equivalence of the underlying loop spaces. For the disk of dimension six this gives
\begin{equation}\label{eq:deloopMorlet}
B\mathcal D\simeq\Omega^6_0Y;
\end{equation}
Here the subscript denotes the component of the constant map; fixing a boundary collar instead of fixing the boundary pointwise gives the same homotopy type. This is Morlet's equivalence in its classifying-space form \cite[Essay V, Theorem 3.4]{KS}, also recalled in \cite[p.~2, before Corollary C]{KK}; the smoothing-section construction and its stabilization compatibility are described in \cite[Theorems 1.2--1.3]{Burghelea}. We choose the equivalence induced by that construction. Composing the canonical equivalence $\mathcal D\simeq\Omega B\mathcal D$ with its loop gives the Morlet comparison used below. Thus the universal-bundle boundary and the delooping are part of the same comparison, not independently chosen identifications. Combining that boundary map, this equivalence, and sixfold adjunction gives isomorphisms
\[
\mu_j:\pi_j\mathcal D\longrightarrow\pi_{j+7}Y,
\qquad j=1,2.
\]
The definition of $\mathcal Q_{\mathcal D}$ now gives the commutative diagram
\[
\begin{array}{ccccc}
\pi_1\mathcal D&\xrightarrow{\ \mu_1\ }&\pi_8Y&\xrightarrow{\ s_*\ }&\pi_8(\Topg/O)\\[3pt]
\mathcal Q_{\mathcal D}\,\big\downarrow&&\big\downarrow\,{\circ\eta_8}&&\big\downarrow\,{\circ\eta_8}\\[3pt]
\pi_2\mathcal D&\xrightarrow{\ \mu_2\ }&\pi_9Y&\xrightarrow{\ s_*\ }&\pi_9(\Topg/O).
\end{array}
\]
For the left square, a representative $S^2\to B\mathcal D$ of $\partial_{\mathcal D}^{-1}x$ becomes, after the chosen equivalence and adjunction, a map $S^8\to Y$. Precomposing the former with $\eta_2$ precomposes the latter with $\Sigma^6\eta_2=\eta_8$. The right square is naturality of stabilization.

For precision, write $\mathfrak m:B\mathcal D\to\Omega^6_0Y$ for this chosen smoothing-section equivalence and $\operatorname{adj}_6$ for sixfold adjunction. Then the maps just used are
\begin{equation}\label{eq:comparison-composite}
\begin{split}
\mu_j&=\operatorname{adj}_6\circ\mathfrak m_*\circ\partial_{\mathcal D}^{-1},\\
\Gamma_j&=s_*\circ\operatorname{adj}_6\circ\mathfrak m_*\circ\partial_{\mathcal D}^{-1},
\qquad j=1,2.
\end{split}
\end{equation}
In the second line the target is identified with $\Theta_{j+7}$. In particular, the equivalence used to compute the Hopf operation and the one used to identify the disk-family sphere are the same smoothing comparison. The arrows and their compatibilities are summarized by
\[
\begin{array}{ccccc}
\pi_j\mathcal D&\xrightarrow{\ \partial_{\mathcal D}^{-1}\ }&\pi_{j+1}B\mathcal D
&\xrightarrow{\ \mathfrak m_*\ }&\pi_{j+1}\Omega^6_0Y\\
&&&&\big\downarrow\operatorname{adj}_6\\
\Theta_{j+7}&\xleftarrow{\ \cong\ }&\pi_{j+7}(\Topg/O)
&\xleftarrow{\ s_*\ }&\pi_{j+7}Y.
\end{array}
\]
The universal-bundle boundary supplies the first arrow, the smoothing-section construction supplies the second, and adjunction turns precomposition by $\eta_2$ into precomposition by $\Sigma^6\eta_2$. Stabilization commutes with that precomposition. The identification of the composite with $\Gamma_j$, rather than merely an abstract group isomorphism, is the stabilization compatibility discussed next.

Under $\Theta_{j+7}\cong\pi_{j+7}(\Topg/O)$, the composite $s_*\mu_j$ is $\Gamma_j$ for $j=1,2$. This is the stabilization compatibility of the smoothing-section and disk-family constructions; see \cite[Theorem 1.3]{Burghelea} and the Morlet--Gromoll stabilization diagram in \cite[Section 2.4]{Wang}. To compare conventions, the latter reference uses $\mathrm{PL}/O$. The stable smoothing fibration
\[
\mathrm{PL}/O\longrightarrow\Topg/O\longrightarrow\Topg/\mathrm{PL},
\qquad \Topg/\mathrm{PL}\simeq K(\Z/2,3),
\]
gives natural isomorphisms from $\pi_k(\mathrm{PL}/O)$ to $\pi_k(\Topg/O)$ for $k=8,9$ \cite{KS}. These isomorphisms commute with precomposition by $\eta_8$. The comparison with the cited diagram uses the natural maps from the unstable smoothing spaces to these stable spaces; no arbitrary identification of unstable $\mathrm{PL}$ and topological smoothing spaces is needed.

Finally, the compatibility with the Bredon pairing in \cite[Theorem 1.5]{Burghelea} identifies this precomposition operation with $\eta\,\Gamma_1(x)$. The final equality is obtained in the smoothing group $\pi_9(\Topg/O)\cong\Theta_9$ before any passage to $\operatorname{coker}J_9$. It therefore retains the $bP_{10}$ component. Reading the diagram proves \eqref{eq:Qgromoll}.
\end{proof}

\begin{proposition}[Identification of the mixed model]\label{prop:mixedclass}
The marked manifold $D^9\cup_{J_{\beta_*}}D^9$ represents $C_\eta=\eta E$. Moreover,
\[
C_\eta\in\ker\alpha\setminus bP_{10}.
\]
\end{proposition}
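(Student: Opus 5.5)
The argument has three parts. First express the mixed sphere as a disk-family Gromoll sphere of the form $\Gamma_2\mathcal F_*[\gamma]$. Then identify the two-parameter family with $\mathcal Q_{\mathcal D}$ of the loop calibrated in \eqref{eq:Ecalibration}. Finally apply \cref{lem:hopfcomparison}.

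\emph{Step 1: the chart.} In the chart \eqref{eq:mixedchart}, \eqref{eq:mixedtwist} shows that $J_{\beta_*}$ is the tube twist $\psi_2(u,p,z)\mapsto\psi_2(uc_*(p,z),p,z)$. Its support lies in $|p|^2+|z|^2<1/8$, away from the parameter boundary. By \eqref{eq:slicefamily}, the slice at $p\in D^2$ is $\mathcal F(\gamma_{c_*}(p))$. Since the family is constant near $\partial D^2$, it defines a class $[\gamma_{c_*}]\in\pi_2\mathcal G$. Slicing the disk model of $D^9\cup_{J_{\beta_*}}D^9$ in the standard way identifies it with $\Gamma_2\mathcal F_*[\gamma_{c_*}]$. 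We also have $[\gamma_{c_*}]=-[c_*]=[c_*]$, since $\pi_2\mathcal G=\pi_5S^3=\Z/2$.

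\emph{Step 2: the Hopf operation.} By \eqref{eq:cstarclass}, $[c_*]=\eta_3\eta_4$. By \eqref{eq:QG} this equals $\mathcal Q_{\mathcal G}(\eta_3)$. The loop $g$ of \eqref{eq:Eslice} represents $\eta_3$, so $[\gamma_{c_*}]=\mathcal Q_{\mathcal G}[g]$. Since $\mathcal F$ is a continuous homomorphism and $\mathcal Q$ is natural, $\mathcal F_*[\gamma_{c_*}]=\mathcal Q_{\mathcal D}(\mathcal F_*[g])$. \Cref{lem:hopfcomparison} together with \eqref{eq:Ecalibration} then gives
\[
\Gamma_2\mathcal F_*[\gamma_{c_*}]=\eta\,\Gamma_1\mathcal F_*[g]=\eta E=C_\eta .
\]
Orientation conventions are irrelevant because every element of $\Theta_9$ has order two.

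\emph{Step 3: $C_\eta\in\ker\alpha$.} The index $\alpha$ is multiplicative for the Milnor--Munkres/Bredon pairing with framed classes, so $\alpha(\eta E)=\eta\cdot\alpha(E)$ in $KO_9$. The problem is that $\alpha(E)\ne0$ in $KO_8\cong\Z$ is not excluded a priori. A cleaner route is available. $\alpha$ vanishes on homotopy spheres bounding spin manifolds with suitable structure, and $\eta E$ is represented by the mapping torus construction, which bounds a spin manifold over $E\times D^2$ with twisted boundary. Alternatively, $\alpha(E)=0$ because $E$ admits positive scalar curvature; this follows from Galaz-García's metric, or from the fact that $\alpha$ factors through $\operatorname{coker}J$ and $KO_8$ detects only the image of $J$ on the relevant summand. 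This gives $\alpha(\eta E)=\eta\alpha(E)=0$. I would cite $\alpha(E)=0$ (classical: $\Theta_8=\Z/2$ and $\alpha:\Theta_8\to KO_8$ vanishes, since homotopy spheres have $\hat A=0$).

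\emph{Step 4: $C_\eta\notin bP_{10}$.} It suffices to show that the image of $C_\eta$ in $\operatorname{coker}J_9$ is nonzero. $E$ maps to $\varepsilon$ (or $\eta\sigma$, up to convention) in $\pi_8^s/\operatorname{im}J$. Multiplication by $\eta$ sends this to $\eta\varepsilon$ (respectively $\eta^2\sigma$), which is nonzero modulo $\operatorname{im}J_9=\{\eta^2\sigma+\ldots\}$. Care is needed here. Toda's tables give $\pi_9^s=(\Z/2)^3$ generated by $\nu^3,\eta\varepsilon,\mu$ with $\eta^2\sigma=\nu^3+\eta\varepsilon$, and $\operatorname{im}J_9$ is generated by $\mu$ together with $\eta^2\sigma$. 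One therefore needs $E\mapsto\varepsilon$, the generator of $\operatorname{coker}J_8$, and then $\eta\varepsilon\ne0$ in $\operatorname{coker}J_9$.

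This last identification is the main obstacle. I would verify it using Toda's relations, together with the fact that $\alpha$ detects $\mu$ so $\ker\alpha$ has trivial $\mu$-component.
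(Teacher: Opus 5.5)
Your Steps 1--2 follow the paper's argument for the first assertion. You slice the tube twist via \eqref{eq:slicefamily} and rewrite $[c_*]=\eta_3\eta_4$ as $\mathcal Q_{\mathcal G}(\eta_3)$ by \eqref{eq:QG}. You then push forward by $\mathcal F$ using naturality of $\mathcal Q$ and apply \cref{lem:hopfcomparison} with \eqref{eq:Ecalibration}.

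For $C_\eta\in\ker\alpha\setminus bP_{10}$ you take a different route. The paper cites Bustamante--Krannich--Kupers--Tshishiku. Their $\Theta_8^{\mathrm{split}}=0$ makes $\eta E$ indivisible by two, and Lemma~1.7(i) transfers this to $\operatorname{coker}J_9$, so $\eta E\notin bP_{10}$. The proof of that lemma places $\eta E$ in $b\mathrm{Spin}_{10}$, giving $\alpha(\eta E)=0$.

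You instead compute with the Kervaire--Milnor map $p:\Theta_n\to\pi_n^s/\operatorname{im}J$, Toda's relation $\eta^2\sigma=\nu^3+\eta\varepsilon$, and Adams' $d$-invariant. This is classical and more informative: it locates $p(C_\eta)=[\eta\varepsilon]=[\nu^3]$. It rests, however, on one input you use twice without stating it. That input is Bredon's compatibility of the pairing with composition, $p(\eta\Sigma)=\eta\,p(\Sigma)$ modulo $\operatorname{im}J$. It justifies both ``$\alpha$ is multiplicative'' in Step~3 and ``multiplication by $\eta$ sends this to $\eta\varepsilon$'' in Step~4, so you should cite it as the load-bearing fact.

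Several statements in Steps~3--4 are wrong, though none breaks the argument.

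First, $\operatorname{im}J_9$ is only $\{0,\eta^2\sigma\}$, the image of $\pi_9\SO\cong\Z/2$, and $\mu$ is not in it. Otherwise $\operatorname{coker}J_9$ would have order two and $\Theta_9$ order four. Also $\alpha$ could not then detect $\mu$ on $\Theta_9$, as you assert at the end. Your conclusion survives because $\eta\varepsilon=\eta^2\sigma+\nu^3$ with $\nu^3\neq0$.

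Second, what you call the main obstacle is immediate. $E$ cannot map to $\eta\sigma$, which lies in $\operatorname{im}J_8$. Since $bP_9=0$, the map $\Theta_8\to\operatorname{coker}J_8=\Z/2\{[\varepsilon]\}$ is injective, so $p(E)=[\varepsilon]$. Hence $p(\eta E)=[\eta\varepsilon]\neq0$, and $\eta E\notin bP_{10}=\ker p$.

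Third, in Step~3 drop the mapping-torus remark and the claim that ``$KO_8$ detects $\operatorname{im}J$''. The clean argument runs as follows:
\begin{itemize}
\item For a homotopy sphere of dimension at least three, $\alpha(\Sigma)=d(x)$ for any framed representative $x$.
\item The unit map $d:\pi_*^s\to KO_*$ is a ring homomorphism.
\item It kills $\operatorname{im}J_9$ because $KO_7=0$.
\item It vanishes on all of $\pi_8^s$ because $\pi_8^s$ is finite and $KO_8\cong\Z$.
\end{itemize}
Hence $\alpha(\eta E)=\eta\,d(x)=0$, with no need for a positive scalar curvature metric on $E$.
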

\begin{proof}
Apply \eqref{eq:slicefamily} to \eqref{eq:cstar}. Naturality of $\mathcal Q$, \eqref{eq:QG}, and \cref{lem:hopfcomparison} give
\begin{align*}
[D^9\cup_{J_{\beta_*}}D^9]
&=\Gamma_2\mathcal F_*(\eta_3\eta_4)\\
&=\Gamma_2\mathcal Q_{\mathcal D}(\mathcal F_*\eta_3)
=\eta\,\Gamma_1\mathcal F_*\eta_3=\eta E,
\end{align*}
using \eqref{eq:Ecalibration}. This identifies the actual marking in \eqref{eq:mixedtwist}.

For independence from $K$, we use the separate input \cite[Table 1 and Lemma 1.7(i)]{BKKT}. The table gives $\Theta_8^{\mathrm{split}}=0$, so $\eta E$ is not divisible by two in $\Theta_9$. The lemma says that this divisibility is equivalent after projection to $\operatorname{coker}J_9$. If $\eta E$ belonged to $bP_{10}$, that projection would be zero and hence divisible by two, a contradiction. The $d+1\equiv1\pmod8$ case in the proof of the same lemma places $\eta E$ in $b\mathrm{Spin}_{10}$. Spin-bordism invariance then gives $\alpha(\eta E)=0$.
\end{proof}

The formulas also show that the infinitesimal action is uniformly bounded. For a unit imaginary quaternion $\xi$,
\begin{equation}\label{eq:mixedactionbound}
|\xi h|^2+|\xi w-w\xi|^2\le |h|^2+4|w|^2\le4.
\end{equation}
Thus the action-norm bound $L_{\rho_*}\le2$ is available when we apply the disk theorem in \cref{sec:bundle}.

\section{Framed tube twists and the complementary class}\label{sec:framing}
The two preceding models can be viewed as twists supported near a standard three-sphere. Their difference is encoded by the normal framing of this three-sphere. We now compute how the resulting homotopy sphere changes when the framing changes. This will identify the attaching map used in the final metric construction.

\subsection{The family of marked twists}
Fix the standard core $S^3\subset S^8$ and choose $0<\rho_0<1$. We use the closed tubular neighborhood parametrized by
\[
\Psi_0(q,y)=\bigl(\sqrt{1-\rho_0^2|y|^2}\,q,\rho_0y\bigr),
\qquad(q,y)\in S^3\times D^5.
\]
The strict inequality $\rho_0<1$ makes this a smooth embedding up to the boundary. Changing $\rho_0$ by a positive isotopy does not change the attaching class. The factor $\rho_0$ also leaves the homotopy class of the normal framing unchanged.
When comparing this tube with the charts below, we restrict to smaller tubes and rescale the normal coordinate. Write $y=(z,w)\in\R\oplus\Hh$, and let $\mathsf T(q)=\operatorname{diag}(1,L_q)$. For an integer $m$ put
\begin{equation}\label{eq:framedtwists}
\begin{split}
\Psi_m(q,y)&=\Psi_0(q,\mathsf T(q)^m y),\qquad R_c(q,y)=(qc(y),y),\\
t_m&=[D^9\cup_{\Psi_mR_c\Psi_m^{-1}}D^9],
\end{split}
\end{equation}
where $c=1$ near $\partial D^5$ and $[c]=\eta_3\eta_4$. Each boundary map is extended by the identity outside the tube. A relative homotopy of $c$ gives an isotopy of the twist. A homotopy of normal framings gives an isotopy of tube embeddings. We may therefore shrink the support and change the smooth representative without changing $t_m$.

\begin{lemma}[Localization with fixed parameter collars]\label{lem:gauge-localization}
Let $P\to S^4$ be the left quaternionic Hopf bundle, let $\mathcal H$ be its smooth gauge group, and let $\mathcal H_*$ consist of gauge transformations equal to $1$ on a chosen fiber over $x_0$. Choose a closed four-ball $U\subset S^4$ with $x_0$ in the interior of its complementary ball. Gauge transformations supported in $\operatorname{int}U$ induce an isomorphism
\begin{equation}\label{eq:localgauge}
\pi_1\Map_*(S^4,S^3)\xrightarrow{\cong}\pi_1\mathcal H
\cong\pi_5S^3=\Z/2.
\end{equation}
A smooth based loop that is constant on a parameter collar can be homotoped to such a supported loop, through based loops with a common constant parameter collar. The corresponding gauge diffeomorphisms give an isotopy of the suspended attaching maps, relative to the two polar neighborhoods.
\end{lemma}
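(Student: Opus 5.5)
The plan is to identify the gauge group homotopy type by the standard evaluation fibration and then localize. Restricting a gauge transformation to the fiber over $x_0$ gives a fibration $\mathcal H_*\to\mathcal H\to S^3$, where $S^3$ acts on that fiber by conjugation. For the principal Hopf bundle $P\to S^4$ there is a further identification $\mathcal H_*\simeq\Map_*(S^4,S^3)$. To obtain it, trivialize $P$ over the complementary ball of $U$ (which contains $x_0$) and over $U$; a based gauge transformation is then a based map $S^4\to\Ad P$. After deforming so that it is trivial near the complementary ball, it is a map $(U,\partial U)\to S^3$, i.e.\ an element of $\Map_*(S^4,S^3)$.

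Concretely, I will show that the inclusion of gauge transformations supported in $\operatorname{int}U$ into $\mathcal H_*$ is a weak equivalence. The argument is a radial deformation: using a smooth family of maps of $S^4$ that expand a small neighborhood of the complementary ball onto that ball, and contracting the fiber values there along the trivialization, every based gauge transformation is homotoped to one supported in $\operatorname{int}U$. Since the complementary ball is contractible and the transformation equals $1$ over $x_0$, this contraction is canonical up to homotopy.

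Next comes the long exact sequence
\[
\pi_2S^3\to\pi_1\mathcal H_*\to\pi_1\mathcal H\to\pi_1S^3.
\]
The outer terms vanish, so $\pi_1\mathcal H_*\cong\pi_1\mathcal H$. Moreover, $\pi_1\Map_*(S^4,S^3)=\pi_5S^3=\Z/2$ by adjunction. This gives \eqref{eq:localgauge}.

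For the collar statement, I take a smooth based loop $\lambda\mapsto g_\lambda\in\mathcal H$ that equals $1$ for $\lambda$ in a collar of $\{0,1\}$. I first multiply by the loop $\lambda\mapsto \operatorname{ev}_{x_0}(g_\lambda)^{-1}$, made central-free by using a right-translation gauge over a neighborhood of the fiber. Because $\pi_1S^3=0$, this evaluation loop is null-homotopic relative to the collar, so the correction preserves the collar. Then I apply the radial deformation above uniformly in $\lambda$. Since it is the identity deformation where $g_\lambda=1$, the collar remains constant throughout.

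Finally, each gauge transformation $g$ induces a fiber-preserving diffeomorphism of $P=S^7$, and smooth dependence on parameters converts the homotopy of loops into a smooth family. After suspending (Gromoll construction), this family becomes an isotopy of the attaching maps on $S^8$. That isotopy is trivial near the two poles precisely because the parameter collar is constant there.

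The main obstacle is keeping everything smooth and relative: the radial deformation must be smooth in both the point and the parameter, and it must fix transformations that are already trivial. I would handle this by choosing one fixed smooth cutoff-driven reparametrization of $S^4$ and one fixed smooth contraction of the complementary ball, and applying both to the whole family at once.
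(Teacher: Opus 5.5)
Your proposal is correct and follows essentially the same route as the paper. Both arguments use the evaluation fibration $\mathcal H_*\to\mathcal H\to S^3$ with $\pi_2S^3=\pi_1S^3=0$, localize into $\operatorname{int}U$ via contractibility of the based maps on the complementary ball $V$, apply adjunction to get $\pi_5S^3$, and let the gauge diffeomorphisms give the isotopy relative to the polar neighborhoods. The differences are only in execution: the paper uses the restriction fibration $\mathcal H_U\to\mathcal H_*\to\Map_*((V,x_0),(S^3,1))$ with contractible base, and secures the parameter collar by a fixed reparametrization plus relative smoothing. Your explicit deformation instead fixes the identity gauge transformation, so it preserves the collar directly; your evaluation correction is best implemented as a lift of a null-homotopy of the evaluation loop relative to the collar.
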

\begin{proof}
Evaluation on the chosen fiber is a locally trivial fibration of gauge groups. Since $\pi_2S^3=\pi_1S^3=0$, its homotopy exact sequence gives $\pi_1\mathcal H_*\cong\pi_1\mathcal H$.

Put $V=\overline{S^4\setminus U}$. A trivialization on $V$ identifies restriction of based gauge transformations with the map in the sequence
\[
\mathcal H_U\longrightarrow\mathcal H_*
\longrightarrow\Map_*((V,x_0),(S^3,1)).
\]
One may first work with continuous sections of the adjoint bundle. Restriction has the homotopy lifting property because the pair $(S^4,V)$ is a cofibration; the lifting property can also be obtained by extending a homotopy over a collar of $\partial V$. The target is contractible by contracting $V$ to $x_0$, and the fiber over the identity is the section space equal to $1$ on $V$. Trivializing on $U$ identifies this fiber with $\Map_*(U/\partial U,S^3)$. Relative smoothing of compact families and collar straightening give the same homotopy groups for smooth gauge transformations with constant collars. In particular, the inclusion of transformations supported in $\operatorname{int}U$ induces the claimed isomorphism on $\pi_1$.

The homotopies may be taken relative to the base point of the parameter circle. Precompose every loop and every intermediate loop with one fixed smooth reparameterization that is constant near that base point. Relative smoothing then makes the family smooth while retaining a common constant parameter collar. Such a family gives an isotopy of the suspended gauge diffeomorphisms: every slice is a gauge diffeomorphism, and constancy on the parameter collar makes every slice the identity on fixed polar neighborhoods. Finally, adjunction gives $\pi_1\Map_*(S^4,S^3)=\pi_5S^3$.
\end{proof}

\begin{lemma}[The first three framings]\label{lem:calibrations}
The classes in \eqref{eq:framedtwists} satisfy
\[
t_0=0,\qquad t_1=K,\qquad t_2=C_\eta.
\]
\end{lemma}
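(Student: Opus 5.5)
The three identities will be proved by matching each framed tube twist $\Psi_mR_c\Psi_m^{-1}$ with one of the attaching maps already identified. The tools are the isotopy invariances recorded after \eqref{eq:framedtwists}: shrinking the support, changing the representative of $c$ relative to $\partial D^5$, and homotoping the normal framing. For the charts below we also use \cref{lem:gauge-localization}.

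\emph{The case $m=0$.} In the untwisted framing $\Psi_0$, the map $R_c$ is right multiplication $q\mapsto qc(y)$ with $y$ ranging over a disk $D^5$. We extend $c:D^5\to S^3$, which is $1$ near $\partial D^5$, to a map $\tilde c:D^6\to S^3$ by a radial null-homotopy. This gives a one-parameter family of twists $R_{c_s}$ of the same form. Each $R_{c_s}$ is a diffeomorphism of $S^8$ supported in the tube, and the family joins $R_c$ to the identity. Concretely, the tube $S^3\times D^5$ sits in a tube $S^3\times D^5\times[0,1]$ of a larger chart, and the extra normal coordinate provides the room for the contraction. Hence $\Psi_0R_c\Psi_0^{-1}$ is isotopic to the identity and $t_0=0$. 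If a cleaner route is needed, I would note that with trivial framing the twist extends over $D^9$ as $(q,y,r)\mapsto(qc(y),y,r)$ in a collar-extended tube $S^3\times D^6\subset D^9$.

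\emph{The cases $m=1,2$.} Here the strategy is to rewrite the two known models in the form \eqref{eq:framedtwists}. For $C_\eta$, the chart \eqref{eq:mixedchart} is $\psi_2(u,p,z)=(p,\sqrt{1-|p|^2-|z|^2}\,u,uzu^{-1})$. By \eqref{eq:mixedtwist}, the attaching map is $u\mapsto uc_*(p,z)$ in this chart, with $[c_*]=\eta_3\eta_4$ by \eqref{eq:cstarclass}. Comparing with $\Psi_0$, the normal coordinate $y=(p,z)\in\R^2\oplus\im\Hh$ is carried to the normal vector $(p,uzu^{-1})$. The normal framing therefore differs from the trivial one by the map $S^3\to\SO(5)$, $u\mapsto\operatorname{diag}(1,1,\Ad_u)$. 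In $\pi_3\SO(5)\cong\Z$ the class $[\Ad]$ equals $2[L]$, since $\Ad_u=L_uR_{u^{-1}}$ and right multiplication $R_{u^{-1}}$ stabilizes to $[L]$ up to the usual sign conventions. The framing is thus homotopic to $\mathsf T^2$, and \cref{prop:mixedclass} gives $t_2=C_\eta$. Two things remain to check. First, the sign convention on $\pi_3\SO(4)\to\pi_3\SO(5)$ must give $+2$ and not $0$. Second, the passage from $R_{u^{-1}}$ to $L_u$ after stabilization is where $m=2$ rather than $m=0$ comes out, so this has to be computed honestly. A mistake there would also flip the parity of $\binom m2$.

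For $K$, I would use \eqref{eq:Psi}. There $\Psi(x,y)=(Cx,Cy)$ is left multiplication on both coordinates with $C$ supported near $|x|,|y|\in[1/2,1]$. After polar coordinates on the pair $(x,y)$, this left-diagonal action is a twist along the left $S^3$-orbit of a point. Rewriting $Cx=x(x^{-1}Cx)$ converts it into right multiplication by $x^{-1}Cx$ in the $x$-orbit coordinate. The normal coordinate $y$ is then transported by $x^{-1}y$, which is one left quaternionic framing, i.e. $\mathsf T^{\pm1}$. The remaining task is to check that the new twist function has class $\eta_3\eta_4$. It is the commutator $[a(x),a(y)]$ restricted to a slice. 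Its class is the Samelson/commutator product of $\eta_3$ with itself, which is $\eta_3\eta_4$ in $\pi_5S^3$; alternatively, $t_1\neq0$ can be deduced from \cref{thm:topology}. \cref{lem:gauge-localization} is used to localize this gauge-type twist into a tube over a small ball, so the slice map becomes an honest compactly supported $c$. I expect this step to be the main obstacle: the identification of the localized commutator with a tube twist of framing exactly one, including the class of $c$. By \cref{thm:topology} this gives $t_1=K$.
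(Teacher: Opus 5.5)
\textbf{Gap in the case $t_0=0$.} Neither of your two arguments uses that the framing is trivial. Each would therefore show equally well that $\Psi_mR_c\Psi_m^{-1}$ is isotopic to the identity, or extends over $D^9$, for every $m$. That contradicts $t_1=K\neq0$.

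Concretely, a path of twists $R_{c_s}$, each supported in the tube and ending at the identity, is a homotopy from $c$ to $1$ through maps equal to $1$ near $\partial D^5$. That is a null-homotopy of $[c]=\eta_3\eta_4\neq0$ in $\pi_5S^3$, so it does not exist. No radial rescaling produces one: contracting the domain destroys the boundary condition, and shrinking the support is discontinuous at the end. Inside $S^8$ there is also no sixth normal direction available.

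Your collar version has the same defect. The map $(q,y,r)\mapsto(qc(y),y,r)$ on $S^3\times D^5\times[0,1]\subset D^9$ is still the twist on the inner face. Making it the identity there requires extending $c$ over a half six-disk with value $1$ on the curved face, which is again a null-homotopy of $[c]$.

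The room has to come from the core, not from the normal directions. With the product framing, the tube is the face $\partial D^4\times D^5$ of $D^4\times D^5\cong D^9$, and right multiplication extends linearly: $(v,y)\mapsto(vc(y),y)$ for $v\in D^4\subset\Hh$. This map restricts to $R_c$ on $S^3\times D^5$ and to the identity on $D^4\times\partial D^5$. This is the paper's argument, and it is exactly where $m=0$ is used: for $m\neq0$ the framing $\mathsf T^m$ has class $m[L_q]\neq0$ in $\pi_3\SO(5)$ and does not extend over the core disk.

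\textbf{The cases $t_1$ and $t_2$.} Your strategy here is essentially the paper's, but you leave the decisive signs open.

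For $t_2$ you must actually show $[R_{u^{-1}}]=+[L_u]$ in $\pi_3\SO(5)$; otherwise $[\Ad]=0$. The paper does this with $P=\operatorname{diag}(-1,Q)\in\SO(5)$, $Q(w)=\overline w$, which satisfies $P\operatorname{diag}(1,L_{q^{-1}})P^{-1}=\operatorname{diag}(1,R_q)$. Connectedness of $\SO(5)$ then gives $[R_q]=-[L_q]$. Hence $[R_{q^{-1}}]=[L_q]$ and $[\Ad_q]=[L_q]+[R_{q^{-1}}]=2[L_q]$.

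For $t_1$ the exponent is not ambiguous. With $w=x^{-1}y$ one has $y=xw$, so the normal framing at the core point $q=x/|x|$ is $w\mapsto qw$, which is $\mathsf T^{+1}$. This matters: a priori $\mathsf T^{-1}$ gives a different tube class, and the paper later finds $t_{-1}=K+C_\eta$. You should also state explicitly that equivariance gives $x^{-1}C(x,y)x=C(|x|,|x|\,x^{-1}y)$, independent of $q$. That is what makes $\Psi$ a tube twist of the form $R_c$, with support already compact in the normal coordinates.

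Finally, drop the Samelson-product claim: $\langle\eta_3,\eta_3\rangle$ lies in $\pi_8S^3$, not $\pi_5S^3$. The nontriviality of the slice function must come, as in your fallback and in the paper, from $K\neq0$ (\cref{thm:topology}) together with $\pi_5S^3=\Z/2$.
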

\begin{proof}
The twist with the product framing extends over $D^4\times D^5$ by $(v,y)\mapsto(vc(y),y)$. After rounding corners this is a disk extension, so $t_0=0$.

For $t_1$, use the commutator of \cref{prop:commutator}. On each latitude it is a gauge transformation of the left Hopf bundle $S^7\to S^4$, and its latitude loop is constant for $|\lambda|\ge1/3$. Apply \cref{lem:gauge-localization} to this loop. The latitude loop of the commutator cannot be null-homotopic, since that would isotope its sphere attaching map to the identity, contradicting \cref{thm:topology}. It therefore represents the unique nonzero local class in \eqref{eq:localgauge}. In a left Hopf tube the chart is
\[
(q,z,w)\longmapsto\bigl(\sqrt{1-z^2-|w|^2}\,q,z,qw\bigr).
\]
The core $w=z=0$ is the standard $S^3$. Differentiation in the normal coordinates gives $(z,w)\mapsto(z,qw)$, so its normal framing, relative to the product tube, is exactly $\mathsf T(q)$. A gauge transformation in this trivialization has the form $q\mapsto qc(z,w)$; the pair $(z,w)$ supplies the five normal parameters. By \cref{lem:gauge-localization}, the supported latitude loop represents the unique nonzero class in $\pi_5S^3$, with fixed parameter collars. Relative homotopy to the chosen representative $c$ therefore preserves the attaching isotopy class in this fixed tube. The local coordinates can be compared explicitly. On the latitude $\lambda=z$, a left Hopf section writes the two quaternionic coordinates as
\[
\bigl(\sqrt{1-z^2-|w|^2}\,q,qw\bigr).
\]
Thus $z$ is the suspension parameter and $w\in\Hh$ supplies the four local base coordinates. Shrinking the support into a smaller product neighborhood, and then applying a fixed rescaling, identifies $(z,w)$ with the normal disk used to define $R_c$. The loop homotopy from \cref{lem:gauge-localization} is constant on a common $z$-collar; compactly supported representatives and their relative homotopies can also be chosen constant on a common $w$-collar. Extending each slice by the identity therefore gives a smooth isotopy on $S^8$. The subsequent homotopy between representatives of the same element of $\pi_5S^3$ is likewise taken relative to a boundary collar of this normal disk. Consequently the class is $t_1=K$. The plumbing identification comes from \cref{thm:topology}; localization changes its representative while the displayed chart records its framing.

For $t_2$, the normal framing of \eqref{eq:mixedchart} along the core is $\operatorname{diag}(I_2,\Ad_q)$. In $\pi_3\SO(5)$ this is $2[L_q]$. To check the sign, let $Q(w)=\overline w$ and $P=\operatorname{diag}(-1,Q)\in\SO(5)$. Then
\begin{equation}\label{eq:rightframe}
P\operatorname{diag}(1,L_{q^{-1}})P^{-1}=\operatorname{diag}(1,R_q).
\end{equation}
Since $\SO(5)$ is connected, this implies $[R_q]=-[L_q]$. The identity $\Ad_q=L_qR_{q^{-1}}$ on $\Hh$ gives the asserted factor of two. The class of the tube twist in \eqref{eq:mixedtwist} is therefore $t_2$, which equals $C_\eta$ by \cref{prop:mixedclass}.
\end{proof}

The coordinate data in the calibrations, together with the right Hopf chart below, are collected in \cref{tab:framing-markings}. The normal coordinates are rescaled into smaller tubes before comparison; the core coordinate and the side of multiplication in the twist are kept fixed.
\begin{table}[htbp]
\centering\small\renewcommand{\arraystretch}{1.18}
\begin{tabularx}{\textwidth}{@{}p{24mm}p{31mm}>{\raggedright\arraybackslash}X p{21mm}@{}}
\toprule
Tube & Normal coordinates & Normal framing along the core & Twist \\
\midrule
Product & $y\in D^5$ & $I_5$ & $q\mapsto qc(y)$\\
Left Hopf & $(z,w)\in\R\oplus\Hh$ & $\operatorname{diag}(1,L_q)$ & $q\mapsto qc(z,w)$\\
Mixed & $(p,z)\in\R^2\oplus\im\Hh$ & $\operatorname{diag}(I_2,\Ad_q)$; class $2[L_q]$ & $q\mapsto qc_*(p,z)$\\
Right Hopf & $(z,x)\in\R\oplus\Hh$ & $\operatorname{diag}(1,R_q)$; class $-[L_q]$ & $q\mapsto qc(z,x)$\\
\bottomrule
\end{tabularx}
\caption{Core coordinate $q\in S^3$, normal framings, and the fixed side of multiplication. The right Hopf framing is computed in \cref{cor:rightclass}.}\label{tab:framing-markings}
\end{table}

\subsection{The relative mapping class calculation}
The three calibrations determine all the classes $t_m$ once we control the action of a framing change on the relative mapping class group. Krylov's theorem \cite[Lemma 1 and the preceding description of $FC_q^{p+1}$]{Krylov} makes this a finite-dimensional calculation over $\mathbb F_2$.

\begin{lemma}[Extension through the standard tube]\label{lem:tube-extension}
Extension by the identity from a standard tube $S^3\times D^5\subset S^8$, followed by the double-disk construction, induces a homomorphism
\[
e:\widetilde\pi_0\Diff(S^3\times D^5,\mathrm{rel}\,\partial)\longrightarrow\Theta_9.
\]
\end{lemma}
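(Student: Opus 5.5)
We define $e$ in two stages and check that each stage is well defined on isotopy classes and multiplicative. Let $\iota:S^3\times D^5\hookrightarrow S^8$ be the standard tube embedding $\Psi_0$ of \cref{sec:framing}. For $\phi\in\Diff(S^3\times D^5,\mathrm{rel}\,\partial)$, interpreted as equal to the identity on a boundary collar, the formula $\hat\phi=\iota\phi\iota^{-1}$ on the image of $\iota$ and $\hat\phi=\mathrm{id}$ elsewhere defines a smooth diffeomorphism of $S^8$. The construction is smooth because $\phi$ is the identity near $\partial(S^3\times D^5)$. If $\phi$ is only the identity on the boundary itself, we first replace it by a representative that is the identity on a collar; collar straightening shows that this replacement is canonical up to isotopy.

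The resulting map $\phi\mapsto\hat\phi$ satisfies $\widehat{\phi\psi}=\hat\phi\hat\psi$, so it is a group homomorphism $\Diff(S^3\times D^5,\mathrm{rel}\,\partial)\to\Diff(S^8)$. It is continuous, so it sends isotopies rel boundary, which we may take with collars, to isotopies on $S^8$. We may also arrange that every $\hat\phi$ fixes a point, and in fact a polar neighborhood, since the tube misses a neighborhood of some point. This gives a homomorphism $\pi_0\Diff(S^3\times D^5,\mathrm{rel}\,\partial)\to\pi_0\Diff_\partial(D^8)$, after restricting to the complement of the fixed polar disk. Here we use that the tube lies in the interior of the complementary disk $D^8$. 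The notation $\widetilde\pi_0$ presumably denotes concordance or pseudo-isotopy classes, as in Krylov. If so, the same argument applies verbatim to concordances $\Phi$ on $S^3\times D^5\times I$ that are the identity near the side boundary: they extend by the identity to concordances of $D^8$ rel boundary.

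The second stage is the classical Gromoll/Milnor homomorphism $\pi_0\Diff_\partial(D^8)\to\Theta_9$, $f\mapsto[D^9\cup_fD^9]$, where $f$ is extended by the identity to $S^8$. This map is well defined on isotopy classes, and on concordance classes by Cerf's pseudo-isotopy theorem in this dimension or directly via the standard argument. It is a homomorphism because $D^9\cup_{fg}D^9\cong(D^9\cup_fD^9)\#(D^9\cup_gD^9)$. To prove this, insert a collar $S^8\times[0,1]$ and compose the two attaching maps across it. Then use that $f$ and $g$ can be isotoped to have disjoint supports in two hemispheres, so that $fg$ is realized as a connected sum along the equator. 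Composing the two stages gives $e$.

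The main point to watch is compatibility with the paper's orientation and marking conventions. We must check that $e$ matches the classes $t_m$ defined in \eqref{eq:framedtwists}, namely $t_m=e([\Psi_0^{-1}\Psi_mR_c\Psi_m^{-1}\Psi_0])$ after conjugating into the standard tube. The convention $D^n\cup_fD^n$ with opposite boundary orientations makes the map $f\mapsto[D^9\cup_fD^9]$ covariant rather than anti-multiplicative. In any case $\Theta_9$ is abelian of exponent two, so the order of composition does not matter.
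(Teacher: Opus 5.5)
Your proposal is correct and follows essentially the same route as the paper. Both arguments use collar straightening, extension of diffeomorphisms and relative pseudo-isotopies by the identity, invariance of the double disk under pseudo-isotopy, and the correspondence between composition of attaching maps and connected sum.

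The differences are organizational. You factor explicitly through $\pi_0\Diff_\partial(D^8)$ and offer Cerf's theorem as one way to get pseudo-isotopy invariance. The paper instead proves that invariance directly: it glues two copies of $D^9\times I$ along the extended pseudo-isotopy of $S^8\times I$ and applies the $h$-cobordism theorem. That is exactly your ``standard argument,'' and it avoids needing Cerf.
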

\begin{proof}
Use collar straightening to take all relative diffeomorphisms and pseudo-isotopies to be the identity on a boundary neighborhood. A relative pseudo-isotopy extends by the identity to a diffeomorphism of $S^8\times I$ with the two extended attaching maps as its ends. Glue two copies of $D^9\times I$ along their lateral boundaries by this map and round the corners. The result is a cobordism between the two double disks. Van Kampen's theorem makes it simply connected, and the relative Mayer--Vietoris sequence shows that each boundary inclusion is a homology equivalence. Hence both inclusions are homotopy equivalences. The smooth $h$-cobordism theorem gives an oriented diffeomorphism between the two double disks. Thus extension depends only on the relative pseudo-isotopy class. Composition of attaching maps corresponds to connected sum of the associated homotopy spheres, as in the standard description of $\Theta_9$ \cite{KM}; this proves the homomorphism assertion.
\end{proof}

\begin{proposition}[Four-periodicity]\label{prop:periodicity}
For every integer $m$,
\begin{equation}\label{eq:periodicity}
t_m=(m\bmod2)K+\left(\binom m2\bmod2\right)C_\eta.
\end{equation}
In particular, $t_{m+4}=t_m$ and $t_{-1}=t_3=K+C_\eta$.
\end{proposition}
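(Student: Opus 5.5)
The plan is to realize $m\mapsto t_m$ as a function on the framings $\pi_3\SO(5)\cong\Z$, via the relative mapping class group of the tube $S^3\times D^5$. Then I show this function is a quadratic map into $\Theta_9$ and read off its coefficients from the calibrations $t_0=0$, $t_1=K$, $t_2=C_\eta$ of \cref{lem:calibrations}.

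Write $\mathsf M=\widetilde\pi_0\Diff(S^3\times D^5,\mathrm{rel}\,\partial)$. The framing change $\Phi_m(q,y)=(q,\mathsf T(q)^my)$ acts on $\mathsf M$ by conjugation, and $\Psi_m R_c\Psi_m^{-1}=\Psi_0(\Phi_mR_c\Phi_m^{-1})\Psi_0^{-1}$. Hence
\[
t_m=e\bigl(\phi^m(x)\bigr),\qquad x=[R_c],\quad \phi=\Phi_1(\cdot)\Phi_1^{-1},
\]
with $e$ the homomorphism of \cref{lem:tube-extension}. The class $x$ lies in the subgroup generated by $\pi_5S^3$-twists; that is, $x$ is the image of $\eta_3\eta_4$ under $\pi_5\SO(4)\to\mathsf M$ or its analogue. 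By Krylov's description \cite{Krylov}, the relevant part of $\mathsf M$ is a finite $\mathbb F_2$-vector space. It contains the image of $\pi_5S^3$ together with pieces built from $\pi_3\SO(5)\otimes\pi_5 S^3$-type compositions and $\Theta_9$. Its relevant part is finite, so $\phi$ restricted to the $\phi$-invariant span $W$ of $x$ is a linear map of an $\mathbb F_2$-vector space. The main computational claim is that $N=\phi-1$ is nilpotent on $W$ with $N^2x\in\ker e$ (or $N^3=0$ and $e N^2x=0$). Then $\phi^m=\sum_k\binom mk N^k$ gives
\[
t_m=e(x)+m\,e(Nx)+\binom m2 e(N^2x).
\]
Since $\Theta_9$ has exponent two, only $m\bmod2$ and $\binom m2\bmod2$ matter.

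Next I match coefficients. From $t_0=e(x)=0$, then $t_1=e(Nx)=K$, and $t_2=2e(Nx)+e(N^2x)=e(N^2x)=C_\eta$. This is formula \eqref{eq:periodicity}, provided the expansion has no cubic or higher terms. Four-periodicity then follows because $m\bmod2$ and $\binom m2\bmod2$ depend only on $m\bmod4$. The value $t_3=K+C_\eta$ is immediate, and $t_{-1}$ uses $\binom{-1}{2}=1$.

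The main obstacle is justifying that the expansion truncates. I need $eN^kx=0$ for $k\ge3$, and I must also confirm that $\phi^m$ acts through $m$ in this polynomial way, i.e.\ that $m\mapsto\Phi_m$ is a homomorphism $\Z\to\widetilde\pi_0\Diff(S^3\times D^5)$, which holds since $\mathsf T(q)^m\mathsf T(q)^n=\mathsf T(q)^{m+n}$. For the truncation, I would use Krylov's filtration of $\mathsf M$ by the image of $\pi_5 SO(5)$-twists, then $\pi_8$-type terms, then $\Theta_9$. Framing change $\phi$ raises filtration: it adds composites with the framing class $[L_q]\in\pi_3\SO(5)$. Three successive raises would land in groups like $\pi_{11}$ or in degrees exceeding the tube's dimension, hence zero. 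If this filtration argument proves too delicate, the fallback is to identify $t_m$ directly with a Milnor--Munkres-type pairing that is quadratic in the framing. That is, view $\Psi_m$ as the pairing of $m[L_q]$ with $\eta_3\eta_4$, whose bilinear defect is the symmetric term giving $\binom m2$. I would verify that degree $\le2$ follows because $\pi_3\SO(5)$ enters at most twice in any nonvanishing composite landing in $\pi_9$.
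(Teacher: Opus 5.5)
Your architecture is the paper's: $t_m=e(\phi^m[R_c])$ with $\phi$ the conjugation action of the framing change on $\mathsf M=\widetilde\pi_0\Diff(S^3\times D^5,\mathrm{rel}\,\partial)$, a truncated binomial expansion, and coefficients read off from \cref{lem:calibrations}. However, the step you flag as the main obstacle, truncation, is a genuine gap, and the filtration heuristic you offer does not close it.

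By Krylov, $\mathsf M\cong\Theta_9\oplus\pi_5\SO(4)$ is a fixed five-dimensional $\mathbb F_2$-vector space. The filtration you describe by ``$\pi_5\SO(5)$-twists, then $\pi_8$-type terms'' is not this structure. More importantly, $\phi$ is an automorphism of $\mathsf M$, not an operator shifting any grading, so counting homotopy degrees (``three raises land in $\pi_{11}$'') cannot force $N=\phi-1$ to be nilpotent. Nilpotence is a real constraint here. On the two-dimensional quotient $\mathsf M/L$ by the disk-supported subgroup $L\cong\Theta_9$, the induced map could a priori be an element of order three in $\mathrm{GL}_2(\mathbb F_2)\cong S_3$. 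For such an element $\bar N$ is invertible, $t_m$ is not quadratic in $m$, and $t_0,t_1,t_2$ do not determine $t_3$. Your Milnor--Munkres fallback (``$\pi_3\SO(5)$ enters at most twice'') is likewise only asserted. Also, as written your main computational claim $N^2x\in\ker e$ contradicts $e(N^2x)=C_\eta$; you mean $N^3x\in\ker e$.

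The paper supplies exactly the two facts you are missing.
\begin{itemize}
\item First, $NL=0$, because conjugation transports a disk-supported class by an orientation-preserving embedding, which preserves its sphere class.
\item Second, there is a nonzero $\phi$-invariant functional
\[
e_0([f])=[y\mapsto\pr_1f(1,y)]\in\pi_5S^3 .
\]
It is well defined on pseudo-isotopy classes. It is additive, via $h_{fg}=(h_f\circ g)h_g$ on $M/\partial M\simeq S^5\vee S^8$ together with the Hurewicz argument. It vanishes on $L$, is nonzero on $[R_c]$, and satisfies $e_0\phi=e_0$ because $\mathsf T(1)=I$ and $\pr_1\Phi_1=\pr_1$.
\end{itemize}
Hence $\phi$ preserves the filtration $L\subset\ker e_0\subset\mathsf M$, whose graded pieces have dimensions $3,1,1$. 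It acts as the identity on each piece: on $L$ by the first fact, and automatically on the one-dimensional $\mathbb F_2$-pieces. Therefore $N\mathsf M\subset\ker e_0$, $N\ker e_0\subset L$ and $NL=0$, so $N^3=0$ and $\phi^4=I$. This is the filtration your argument needs. It justifies the truncation for $m\ge0$, and it also justifies your use of $\binom{-1}{2}$ for negative $m$, which likewise requires nilpotence.
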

\begin{proof}
Let $M=S^3\times D^5$ and let $\mathcal P=\widetilde\pi_0\Diff(M,\mathrm{rel}\,\partial)$, where the tilde denotes relative pseudo-isotopy classes. The cited theorem, with $(p,q)=(3,5)$, gives
\[
\mathcal P\cong\Theta_9\oplus FC_5^4,
\qquad FC_5^4\cong\pi_5\SO(4)=(\Z/2)^2;
\]
the second identification applies because $5<2\cdot3$. Hence $\mathcal P$ is a five-dimensional $\mathbb F_2$ vector space. Its disk-supported subgroup $L\cong\Theta_9$ has dimension three and $Q=\mathcal P/L$ has dimension two. We use the intrinsic quotient $Q$, so the subsequent calculation is independent of a choice of splitting.

Define
\[
e_0([f])=[y\mapsto\pr_1f(1,y)]\in\pi_5S^3=\mathbb F_2.
\]
A relative pseudo-isotopy, restricted to $\{1\}\times D^5\times I$ and projected to the first factor, gives a relative homotopy, so $e_0$ is well defined. It is also a homomorphism. Indeed, put $h_f(q,y)=\pr_1f(q,y)q^{-1}$. This descends to
$X=M/\partial M\simeq S^5\vee S^8$, and
$h_{fg}=(h_f\circ g)h_g$.
Every relative-boundary diffeomorphism fixes $H_3(M)$, and by oriented duality also $H_5(M,\partial M)$. Since $X$ is four-connected, the Hurewicz isomorphism gives $\bar g\circ j\simeq j$ for $j:S^5\to X$ induced by $y\mapsto(1,y)$. Restriction of the displayed product to $j$ proves additivity. A supporting disk can be moved off this slice, so $e_0|_L=0$, whereas $e_0([R_c])=[c]\ne0$.

Let $F(q,y)=(q,\mathsf T(q)y)$ and $\Phi([f])=[FfF^{-1}]$. Although $F$ need not fix the boundary, conjugation preserves the relative group. It fixes $L$ pointwise, since an orientation-preserving transport of a local disk preserves its sphere class. Also $F(1,y)=(1,y)$ and $\pr_1F=\pr_1$, whence $e_0\Phi=e_0$. Choose a basis of $Q$ in which the nonzero functional induced by $e_0$ is the first coordinate. Since $e_0\Phi=e_0$, the matrix of $\bar\Phi$ has the form $\left(\begin{smallmatrix}1&0\\b&1\end{smallmatrix}\right)$ with $b\in\mathbb F_2$. Consequently $(\bar\Phi-I)^2=0$. To display what this implies on the full relative group, choose any vector-space complement to $L$. In the resulting basis,
\begin{equation}\label{eq:framing-blocks}
\Phi=\begin{pmatrix}I_3&U\\0&J_b\end{pmatrix},\qquad
J_b=\begin{pmatrix}1&0\\b&1\end{pmatrix},\qquad
N_0^2=\begin{pmatrix}0&U(J_b-I_2)\\0&0\end{pmatrix},
\end{equation}
where $U$ is an arbitrary $3\times2$ matrix over $\mathbb F_2$ and $N_0=\Phi-I$. This auxiliary splitting is used only to display the calculation. Since $(J_b-I_2)^2=0$, it gives the intrinsic relations
\[
N_0^2\mathcal P\subset L,\qquad N_0L=0,\qquad N_0^3=0,\qquad\Phi^4=I.
\]
By \cref{lem:tube-extension}, extension through the standard tube gives a well-defined homomorphism $e:\mathcal P\to\Theta_9$. With $r_0=[R_c]$, one has $t_m=e(\Phi^m r_0)$. Lemma~\ref{lem:calibrations} gives
\[
e(r_0)=0,\qquad e(N_0r_0)=K,\qquad e(N_0^2r_0)=C_\eta.
\]
For example, $\Phi^2=I+N_0^2$ because the coefficient field has characteristic two, so the third equality follows from $t_2=C_\eta$. More generally, the truncated binomial expansion of $(I+N_0)^m$ proves \eqref{eq:periodicity} for nonnegative $m$. The identity $\Phi^4=I$ extends it to every integer: both $m\bmod2$ and $\binom m2\bmod2$ have period four, since $\binom{m+4}{2}-\binom m2=4m+6$.
\end{proof}

\subsection{The actual right Hopf marking}
On $S^8\subset\R\oplus\Hh^2$, write
\begin{equation}\label{eq:rightHopfchart}
v_1=\sqrt{\frac{1-z^2}{1+|x|^2}}\,q,\qquad
v_2=\sqrt{\frac{1-z^2}{1+|x|^2}}\,xq,\qquad q\in S^3.
\end{equation}
Choose a smooth representative $c(z,x)$ of $\eta_3\eta_4$ with $c-1$ supported in $|z|,|x|<5/8$, and define
\begin{equation}\label{eq:TR}
T_R(q,z,x)=(qc(z,x),z,x),
\end{equation}
extended by the identity. Such a representative is obtained from \eqref{eq:cstar} by an oriented linear identification $\R^2\oplus\im\Hh\cong\R\oplus\Hh$ and a rescaling of its compact support. The construction uses the smooth cutoff \eqref{eq:cutoff} throughout.

\begin{corollary}\label{cor:rightclass}
The marked double disk $D^9\cup_{T_R}D^9$ represents $K+C_\eta$.
\end{corollary}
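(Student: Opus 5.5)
The plan is to identify the marked twist $T_R$ with the framed tube twist $\Psi_mR_c\Psi_m^{-1}$ for $m=-1$, and then read off the class from \cref{prop:periodicity}, which gives $t_{-1}=K+C_\eta$.

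\textbf{Step 1: locate the tube.} In the chart \eqref{eq:rightHopfchart}, the set $x=0$, $z=0$ is the standard core $S^3=\{(0,q,0)\}\subset S^8$. The coordinates $(z,x)\in\R\oplus\Hh$ near $0$ parametrize a tubular neighborhood of this core. Since $c-1$ is supported in $|z|,|x|<5/8$, the map $T_R$ is supported in a compact tube around the core. Shrinking the support by a relative homotopy of $c$ changes the twist only by an isotopy, so we may assume the support lies in a small product neighborhood.

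\textbf{Step 2: compute the normal framing.} Differentiate the chart at $z=0$, $x=0$. The first-order normal displacement is
\[
(z,x)\mapsto\bigl(z,\;xq\bigr)\in\R\oplus\Hh,
\]
while the radial factor contributes only second-order terms. Relative to the product tube $\Psi_0$, the normal framing is therefore $\operatorname{diag}(1,R_q)$. By \eqref{eq:rightframe} and connectedness of $\SO(5)$, this is homotopic to $\operatorname{diag}(1,L_{q^{-1}})=\mathsf T(q)^{-1}$, with $[R_q]=-[L_q]$ in $\pi_3\SO(5)$.

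One subtlety is that \eqref{eq:rightframe} conjugates by the constant matrix $P$, which is an orientation-reversing change of the normal coordinates. I would handle this by composing with an orientation-preserving constant identification. Concretely, use $P$ together with the corresponding conjugation on $D^5$, and keep track of how it transforms $c$: one gets $c\circ P$, whose class is still the nonzero element $\eta_3\eta_4$ because $\pi_5S^3=\Z/2$. A homotopy of framings then gives an isotopy of tube embeddings, as remarked after \eqref{eq:framedtwists}.

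\textbf{Step 3: match the twist.} In these coordinates the twist is $q\mapsto qc(z,x)$, which is right multiplication on the core coordinate. This is exactly $R_c$ written in the tube $\Psi_{-1}$, up to the rescaling of normal coordinates and the smooth reparametrization of the radial factor, both isotopic to the identity. The representative $c$ has $[c]=\eta_3\eta_4$ by its construction from \eqref{eq:cstar} via an oriented linear identification. Hence
\[
D^9\cup_{T_R}D^9\cong t_{-1},
\]
and \cref{prop:periodicity} gives $t_{-1}=t_3=K+C_\eta$.

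\textbf{Main obstacle.} The delicate point is Step 2: making sure the chart framing is isotopic to $\Psi_{-1}$ as a marked tube, with the \emph{same side} of multiplication in the twist. Conjugating by $P$ to turn $R_q$ into $L_{q^{-1}}$ also alters the core parametrization, through $q\mapsto q^{-1}$ if one uses the quaternionic conjugation. That alteration would convert right twists into left twists.

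I would first try to avoid conjugating the core at all. Instead, produce a direct homotopy in $\Map(S^3,\SO(5))$ between $\operatorname{diag}(1,R_q)$ and $\operatorname{diag}(1,L_{q^{-1}})$, acting on normal coordinates only, with $q$ fixed. Such a homotopy exists because both maps represent $-[L_q]$ in $\pi_3\SO(5)$, and free and based classes agree since $\SO(5)$ is connected with $\pi_1$ acting trivially on $\pi_3$.

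A homotopy of normal framings with fixed core gives an isotopy of tube embeddings $\Psi$. The core coordinate and the twist $q\mapsto qc$ are then preserved, as \cref{tab:framing-markings} requires, and the only change to $c$ is an orientation-preserving linear reparametrization of $D^5$.
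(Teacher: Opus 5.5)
Your proposal is correct and follows the paper's route. The paper likewise reads off the framing $\operatorname{diag}(1,R_q)$ from the chart after the $q$-independent, orientation-preserving change of normal variable $w=\sqrt{(1-z^2)/(1+|x|^2)}\,x$. It then uses \eqref{eq:rightframe} to homotope this framing to $\mathsf T(q)^{-1}$ while keeping the right twist fixed, and concludes via $t_{-1}=K+C_\eta$. The worry in your last paragraph is unnecessary: $P=\operatorname{diag}(-1,Q)$ has determinant $(-1)(-1)=+1$, so it lies in $\SO(5)$ and is not orientation-reversing. Moreover, \eqref{eq:rightframe} holds pointwise in $q$ with $P$ acting only on normal coordinates, so conjugating by a path from $P$ to $I$ in $\SO(5)$ already gives exactly the fixed-core framing homotopy you want.
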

\begin{proof}
The chart \eqref{eq:rightHopfchart} has normal framing $\operatorname{diag}(1,R_q)$. Its normal variable is related to the product tube by
$w=\sqrt{(1-z^2)/(1+|x|^2)}\,x$.
This is an orientation-preserving change of base coordinates on the support and is independent of $q$. By \eqref{eq:rightframe} the normal framing has class $-[L_q]$. The right multiplication in \eqref{eq:TR} is kept fixed during the framing homotopy. Thus its sphere class is $t_{-1}$, and \cref{prop:periodicity} applies.
\end{proof}

We have now identified all three attaching maps needed for the existence part of the main theorem. The remaining sections construct metrics that respect these maps.

\section{The compatible-disk construction}\label{sec:bundle}
We turn to the geometric part of the proof. The basic idea is to put invariant metrics on two product spaces $D^9\times S^3$ and then take a free quotient. Each quotient is a disk. Choosing the transition function and the boundary metrics together preserves the required attaching map. The following two nine-dimensional constructions are proved below directly from their source metrics. They are also special cases of \cite[Proposition 5.1]{DHZ}; that reference records the broader framework, rather than an existence theorem invoked in our proof.

\begin{theorem}[Compatible disks for the two nine-dimensional models]\label{thm:CD}
Let $(\rho,\beta)$ be the Kervaire model of \eqref{eq:representation} and \eqref{eq:explicitbeta}, or let $(\rho_*,\beta_*)$ be the mixed model of \eqref{eq:mixedrep} and \eqref{eq:betastar}. The two disks defining the corresponding marked double admit smooth metrics with positive sectional curvature. Their boundary metrics agree under the prescribed map $J_\beta$ or $J_{\beta_*}$, and the sum of their outward second fundamental forms is positive definite. The double therefore admits a smooth metric of positive sectional curvature.
\end{theorem}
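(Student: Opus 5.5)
We follow the He--Liu--Yau / Deng--Hu--Zhang scheme, keeping the representation norm explicit so that one argument covers both $(\rho,\beta)$ and $(\rho_*,\beta_*)$. Write $V$ for the representation space and $D(V)$ for its unit disk. On $D(V)\times S^3$ we use the free action $q\cdot(v,g)=(\rho(q)v,gq^{-1})$. The quotient $D(V)\times_{S^3}S^3$ is canonically $D(V)$ via $[v,g]\mapsto\rho(g)v$.

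\textbf{The two source metrics.} On the northern disk we start from a product-type metric $g_N+\varrho_N\,g_{S^3}$. Here $g_N$ is a rotationally symmetric metric $dr^2+F_N(r)^2g_{S^8}$ with $F_N$ strictly concave, and $\varrho_N$ is a small bi-invariant fiber scale. The southern source metric is the same, except that the $S^3$ factor is twisted by the equivariant map. Concretely, the identification is
\[
[v,g]\mapsto\rho(\beta(v)^{-1}g)v,
\]
or equivalently we pull back along $(v,g)\mapsto(v,\beta(v)g)$. This pullback is still $S^3$-invariant, by the equivariance \eqref{eq:betaeq}. The induced boundary map on $S(V)$ is then exactly $J_\beta$ (respectively $J_{\beta_*}$).

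\textbf{Curvature of the quotients.} By O'Neill's formula, sectional curvature does not decrease under the Riemannian submersion to the quotient disk. It is strictly positive on horizontal planes whose horizontal lift has positive curvature or a nonzero $A$-tensor. The lifted metrics are not of nonnegative curvature in general, since the twisted factor introduces connection terms. So we write the lifted metric as a connection metric with potential $\mathsf A=\beta^{-1}d\beta$, curvature $\Omega$, and norms $M_0=\|\Omega\|$, $M_1=\|D\Omega\|$. Both norms are bounded on the support $|t|<1/3$ of $\beta-1$ from \cref{lem:explicitbeta}, and on the corresponding compact support for $\beta_*$. The standard estimate for connection metrics with fiber scale $\varrho$ gives
\[
\sec\ \ge\ \sec_{\rm base}-C\varrho M_0^2-C\sqrt\varrho\,M_1
\]
on mixed planes, with the vertical planes uniformly positive. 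The bounds $\|\rho\|\le1$ and $L_{\rho_*}\le2$ from \eqref{eq:mixedactionbound} control the action-field contributions. Choosing $\varrho$ small relative to the concavity constant of $F$ then makes every plane in the quotient positive. This is where the explicit lower bound in terms of $M_0,M_1$ comes from.

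\textbf{Boundary matching, convexity, and gluing.} Near $r=1$ both source metrics restrict to the same invariant metric on $S(V)\times S^3$ up to the twist, so the quotient boundary metrics agree under $J_\beta$. The outward second fundamental forms have positive parts from $F'$ on the base directions. These must dominate the $O(\varrho)$ second fundamental form contributions coming from the fiber. Arranging this, with the radii matched so that $F_N'(1)+F_S'(1)>0$, is the step I expect to be the main obstacle. Matching forces the boundary values of the fiber-direction metrics to coincide, while strict concavity forces $F'$ to be small near the seam. I would first try taking $F$ nearly a round cap with seam latitude $\tau_0<\pi/2$. Then each $F'(1)$ is bounded below by $\cos\tau_0>0$, and the fiber contributions to $\II$ are $O(\sqrt\varrho)$. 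Once positive-definite $\II_N+\II_S$ and equal boundary metrics are in hand, the local convolution lemma of \cref{app:localgluing} (equivalently \cite{RW}) smooths the glued $C^0$ metric on $D^9\cup_{J}D^9$ while preserving a positive lower sectional-curvature bound.
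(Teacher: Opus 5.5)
Your architecture (two free $S^3$-quotient disks, a boundary isometry under $J_\beta$, a convex seam, then \cref{lem:localgluing}) is the paper's. However, the two analytic steps that make it close are missing, and the estimates you put in their place do not hold.

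\textbf{The southern disk is not shown to be positively curved.} First, $\mathsf A=\beta^{-1}d\beta$ is pure gauge, so $\Omega=0$, and its radial extension is singular at the center. A smooth equivariant extension of $\beta$ over $D(V)$ would extend $J_\beta$ to a diffeomorphism of $D^9$ and make the double standard. The twist must instead be carried by a smooth connection that is pure gauge near the seam and trivial near the center, such as $\mathsf A_N=\chi_S\gamma^{-1}d\gamma$. Its curvature lives on a radial transition region of the base cap, and that is what $M_0,M_1$ measure.

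More seriously, your bound $\sec\ge\sec_{\mathrm{base}}-C\varrho M_0^2-C\sqrt\varrho M_1$ on mixed planes is false. In \cref{prop:fine} the base curvature multiplies only the horizontal area $h^2$. The horizontal--vertical area has coefficient $\lambda_{\min}\mathsf N-\tfrac12r^2M_0^2$, where $\mathsf N=-r^{-1}\Hess r$. With a constant fiber scale, $\mathsf N=0$, and nothing absorbs the cross term $-2rM_1hm$ coming from $D\Omega$. Restricting to quotient-horizontal planes does not help. The mixed area $m$ of such a plane can be comparable to $(F/r)h$, so these losses become order-one multiples of $h^2$ that shrinking $\varrho$ does not reduce. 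Concavity of $F$ does not help either, since it enters only the $h^2$ coefficient.

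The missing idea is a strictly log-concave fiber radius on the southern cap: $r^2=\varepsilon e^{\varepsilon\varphi}$ with $-\Hess\varphi\ge\Lambda h_B$ and $\Lambda>M_0^2+2M_1^2/\kappa$. After rescaling $m\mapsto\sqrt\varepsilon\,m$ and $v\mapsto v/\sqrt\varepsilon$, the quadratic form becomes uniformly positive definite (\cref{cor:smallfiber}).

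\textbf{The seam is not shown to be convex.} That concavity is exactly what makes the seam hard. Since $\int_{\partial}\partial_n\varphi=\int\Delta\varphi<0$, the southern fiber slope along the outward normal is negative; in the symmetric choice it is $-q_0$ with $q_0=\varepsilon AF_0/2$. On a quotient-horizontal lift $X+U$ of a boundary vector one can have $|U|=F_0|X|/r_a$ with $r_a^2\approx\varepsilon$. The fiber term $-q_0|U|^2$ is then about $-\tfrac12AF_0^3|X|^2$. This is of order $A$, not $O(\sqrt\varrho)$, and the order-one base slopes coming from $F'$ cannot dominate it.

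The paper cancels this term exactly. It gives the northern disk an increasing fiber radius with $r_N'(\ell)/r_a=q_0$, so that $\II_N+J_\beta^*\II_S=(\mu_N-\mu_S)|X|^2>0$. The cost appears in the northern curvature. The losses $2L^2FF'r_N'/r_N^3$ and $L^2F^2r_N''/r_N^3$, with $d/r_N^3\le4AF_0$, must be absorbed by the strongly curved profile $s=F+F^3/(3\delta^2)$. Here $\delta$ is chosen small in terms of $A$ (with $L=2$ in the mixed model) before $\varepsilon$ is fixed.

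Without the log-concave southern fiber and the slope-matched northern fiber, your argument establishes neither positivity of the southern disk nor convexity of the seam.
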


The proof occupies Sections~\ref{sec:bundle}--\ref{sec:gluing}. We first give the construction for the Kervaire model, including explicit parameters, and then treat the mixed model in \cref{sec:mixedmetric}. The method and the principal connection estimate follow \cite{HLY,DHZ}; the quotient description is also used in \cite{GG}. The distinction between our two applications is the norm of the infinitesimal representation. It is at most one in the first model and at most two in the second. This norm enters the northern horizontal-plane estimate and is retained explicitly in the mixed case.

\subsection{Order of the choices}
The attaching maps and all support cutoffs are fixed before any radius parameter. The construction uses the following order; a later choice never changes the data that determine an earlier constant.
\begin{center}
\small\renewcommand{\arraystretch}{1.15}
\begin{tabularx}{\textwidth}{@{}p{27mm}>{\raggedright\arraybackslash}X@{}}
\toprule
Model & Order of choices \\
\midrule
Kervaire & $(\rho,\beta),\omega,h_B$; then $M_0,M_1$; then $A_K,\delta,\varepsilon_K$ by \eqref{eq:parameters}; then the smoothing width.\\
Mixed & $(\rho_*,\beta_*),\omega_*,h_B$; then $M_{0,*},M_{1,*},A_*$; then $\delta_*,\ell_*$; then $\varepsilon_*$ by \eqref{eq:mixedepsilon}; then the smoothing width.\\
Right Hopf & $c,\lambda,\mathcal U,h$; then $C$; then $A_I,B_N,\delta,\ell$; then $\varepsilon_I$ and $a=A_I\varepsilon_I^2$; finally the two smoothing widths.\\
\bottomrule
\end{tabularx}
\end{center}
For the right Hopf model the bounds defining $C$ are uniform on $0\le a\le1/50$, as established in \cref{prop:cylinderpositive}. For the Kervaire model the displayed curvature lower bound depends on the fixed connection norms and on the stated metric normalization; it is not a numerical evaluation of those norms.

\subsection{Two commuting actions}
The following bundle and quotient description applies to either pair in \cref{thm:CD}; we omit stars from the notation. Write $S^9=S(\R\oplus V)$ in polar coordinates $(\tau,z)$, where $z\in S(V)=S^8$ and $0\le\tau\le\pi$. Put $\gamma=\beta^{-1}$ and use the transition
\begin{equation}\label{eq:transition}
u_S=\gamma(z)u_N
\end{equation}
on the polar overlap. This defines a principal right $S^3$-bundle, with right action $u\mapsto uh$. Its commuting left star action is
\begin{equation}\label{eq:star}
q\star(\tau,z,u)=(\tau,\rho(q)z,qu).
\end{equation}
Equivariance gives $\gamma(\rho(q)z)qu_N=q\gamma(z)u_N$, so the action respects \eqref{eq:transition}. It is free because $qu=u$ forces $q=1$. Thus the quotient remains smooth even where the representation has fixed points. In extending disk coordinates $\zeta$, \eqref{eq:star} becomes $(\zeta,u)\mapsto(\rho(q)\zeta,qu)$ and is smooth at each pole.

Every star orbit in a product chart has a unique representative with fiber coordinate one. Hence its quotient coordinate is
\begin{equation}\label{eq:quotient-coordinate}
(\zeta,u)\longmapsto\rho(u^{-1})\zeta.
\end{equation}
Each quotient is a standard smooth nine-disk. At a common boundary point, set $y_N=\rho(u_N^{-1})z$ and $y_S=\rho(u_S^{-1})z$. From \eqref{eq:transition} and equivariance,
\begin{equation}\label{eq:marking}
y_S=\rho(\beta(y_N))y_N=J_\beta(y_N).
\end{equation}
Consequently, matching the metrics in the common source coordinates gives exactly the desired quotient identification.

\subsection{Dimensions and infinitesimal fields}
For \eqref{eq:representation}, the infinitesimal map is
\begin{equation}\label{eq:Kmap}
\mathcal V_z:\im\Hh\longrightarrow T_zS^8,
\qquad \mathcal V_{(t,u,v)}\xi=(0,\xi u,\xi v).
\end{equation}
Thus
\begin{equation}\label{eq:Kbound}
\mathcal V_z^*\mathcal V_z=(1-t^2)I_3,\qquad \|\mathcal V_z\|_\op\le1.
\end{equation}
The source disks have dimension twelve, while their star quotients have dimension nine. We will use two horizontal distributions. The principal projection has horizontal space $\ker\omega$, whereas the disk quotient uses the orthogonal complement of the star orbits. Curvature of the source is computed with the first splitting. O'Neill's formula for the quotient is applied to the second. On the northern disk it will be enough to prove positivity on these quotient-horizontal planes.

\section{Curvature of a principal connection metric}\label{sec:connection}
The southern disk will be obtained from a connection metric with a small three-sphere fiber. Small fibers control the horizontal curvature error, but mixed planes also require concavity of the radius function. We prove the needed estimate from the frame brackets and the Koszul formula in our normalization. This is the estimate developed in \cite[Equation (3.16)]{HLY} and \cite[Lemma 4.1]{DHZ}; the full component calculation is included here. The estimate reduces positivity on all two-planes to a quadratic form in three area components.

\subsection{Conventions and the estimate}
For the rest of the paper, use
\[
R(A,B)C=\nabla_A\nabla_BC-\nabla_B\nabla_AC-\nabla_{[A,B]}C,
\quad
\mathcal R(A\wedge B,C\wedge D)=-G(R(A,B)C,D).
\]
In particular $\mathcal R(A\wedge B,A\wedge B)=G(R(A,B)B,A)$. For a boundary with outward unit normal $n$, set $\II(X,Y)=G(\nabla_Xn,Y)$.

Let $\pi:P\to(B,g_B)$ be a principal right $S^3$-bundle, with fixed smooth connection $\omega$ and curvature $\Omega$. Normalize the inner product $Q$ on $\im\Hh$ by $[e_a,e_b]=2\varepsilon_{abc}e_c$, so the group has curvature one. For $r>0$, put
\begin{equation}\label{eq:connectionmetric}
G=\pi^*g_B+r^2Q(\omega,\omega),\qquad
\theta=d\log r,\qquad \mathsf N=-r^{-1}\Hess_Br.
\end{equation}
Set $M_0=\sup|\Omega|$ and $M_1=\sup|D\Omega|$, where
\[
|\Omega|^2=\sum_{i<j,a}(\Omega_{ij}^a)^2,
\qquad
|D\Omega|^2=\sum_{k,i<j,a}((D_k\Omega)_{ij}^a)^2.
\]
Their fully ordered component arrays therefore have norms $\sqrt2|\Omega|$ and $\sqrt2|D\Omega|$.

For a decomposable bivector in normalized horizontal and vertical frames, write
\begin{equation}\label{eq:decomp}
\xi=(X+U)\wedge(Y+V)=\xi^{\mathrm h}+\zeta+\nu,
\quad
\xi^{\mathrm h}=X\wedge Y,\quad
\zeta=X\otimes V-Y\otimes U,\quad
\nu=U\wedge V,
\end{equation}
and put $h=|\xi^{\mathrm h}|$, $m=|\zeta|$, $v=|\nu|$. Then $|\xi|^2=h^2+m^2+v^2$.

\begin{proposition}[Fine all-plane estimate]\label{prop:fine}
If $\sec_{g_B}\ge\kappa>0$, then
\begin{align}\label{eq:fine}
\mathcal R_G(\xi,\xi)\ge{}&
\left(\kappa-\tfrac34r^2M_0^2\right)h^2
+\left(\lambda_{\min}\mathsf N-\tfrac12r^2M_0^2\right)m^2
+(r^{-2}-|\theta|^2)v^2\notag\\
&-2r(M_1+2|\theta|M_0)hm
-(3M_0+r^2M_0^2)hv
-2r|\theta|M_0mv.
\end{align}
The constants do not increase with the base dimension.
\end{proposition}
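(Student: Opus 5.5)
Using the conventions of \eqref{eq:connectionmetric}, I will compute the full curvature tensor of $G$ in an adapted orthonormal frame. Take a $g_B$-orthonormal frame $e_i$ on $B$ with horizontal lifts $X_i$, and vertical fields $U_a=r^{-1}e_a^*$, where $e_a^*$ are the fundamental fields of a $Q$-orthonormal basis. The brackets are $[X_i,X_j]=\widetilde{[e_i,e_j]}-\Omega^a_{ij}e_a^*$, $[X_i,e_a^*]=0$ and $[e_a^*,e_b^*]=2\varepsilon_{abc}e_c^*$. Since the fiber metric is bi-invariant, these hold with $r$ a function on $B$. Rescaling gives $[X_i,U_a]=-X_i(\log r)U_a$. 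The Koszul formula then gives the Levi-Civita connection. It has the base connection on horizontal pairs, plus $-\tfrac12\Omega(X,Y)$ in the vertical direction. Mixed terms come from the O'Neill $A$-tensor, $\langle\nabla_XU,Y\rangle=\tfrac r2Q(\Omega(X,Y),\cdot)$. There are also warping terms $\theta(X)U$ and $-r\,\theta^\sharp$-type terms, and the totally geodesic, bi-invariant round fiber of radius $r$ contributes curvature $r^{-2}$ (normalized by $[e_a,e_b]=2\varepsilon_{abc}e_c$).

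I will then list the six independent components of $\mathcal R_G(\xi,\xi)$ under the decomposition \eqref{eq:decomp}. \emph{(hh)} The horizontal block is the base curvature minus $\tfrac34r^2|\Omega(X,Y)|^2$, by O'Neill's formula with a warped factor. This gives $(\kappa-\tfrac34r^2M_0^2)h^2$, using $\sec_{g_B}\ge\kappa$ in its curvature-operator form only through $h^2$. \emph{(mm)} The mixed block equals $-r^{-1}\Hess r(X,X)|V|^2$ plus the positive $A$-tensor square $\tfrac{r^2}4|\Omega(X,\cdot)^{*}V|^2$, minus cross contributions $\tfrac r2$-type from the off-diagonal $X\otimes V$, $Y\otimes U$ products. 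For the latter I expect $-\tfrac12r^2M_0^2m^2$ after Cauchy--Schwarz. It yields $\lambda_{\min}\mathsf N\,m^2$ since $\mathsf N$ enters diagonally in $\zeta$. \emph{(vv)} The vertical block is $r^{-2}-|\theta|^2$, the Gauss equation for a fiber with umbilic second fundamental form $\theta$. \emph{(hm)} The horizontal--mixed block is $\tfrac r2\langle(D_X\Omega)(Y,\cdot)\rangle$ plus $\theta\cdot\Omega$ terms from differentiating $r$. This gives the coefficient $2r(M_1+2|\theta|M_0)$ after bounding $|\langle\mathcal R(\xi^{\mathrm h}),\zeta\rangle|$ by the ordered-array norms $\sqrt2|D\Omega|$ and $\sqrt2|\Omega|$. \emph{(hv)} The horizontal--vertical block comes from $\nabla_{X}\nabla_Y$ producing $\Omega(X,Y)$ paired with $[U,V]$ (coefficient $3$, the classical $-3\langle\Omega,[U,V]\rangle$-type term in this normalization), plus an $r^2\Omega\cdot\Omega$ term. \emph{(mv)} The mixed--vertical block is $r\theta\Omega$.

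The key technical point is that each bound must depend only on norms, not on the base dimension. Each block is a bilinear pairing of area components, and it is bounded using operator norms of $\Omega:\Lambda^2\to\im\Hh$ by the Hilbert--Schmidt norm. The constants come from the ordered-array factor $\sqrt2$ together with the $\tfrac12$ in the $A$-tensor. The symmetrization $\mathcal R(\xi,\xi)$ doubles the off-diagonal pieces, giving $2\,hm$ etc.

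The main obstacle is bookkeeping the mixed block and the (hv) coefficient $3M_0+r^2M_0^2$ exactly. Both require care with the decomposable structure of $\xi$, since $\zeta$ is not an arbitrary element of $H\otimes V$. I would first compute with $r$ constant, recovering the standard connection-metric formulas as a check, and then add the $\theta$ and $\mathsf N$ corrections from the conformal variation of the fiber.
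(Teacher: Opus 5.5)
Your outline has a genuine gap in the mixed--mixed block. The component $\mathcal R(X_i\wedge E_a,X_j\wedge E_b)=G(R(X_i,E_a)E_b,X_j)$ contains $\mathsf N_{ij}\delta_{ab}$ and the quadratic term $\tfrac{r^2}{4}\sum_k\Omega^b_{ik}\Omega^a_{jk}$. It also contains $-\tfrac14\langle[e_a,e_b],\Omega_{ij}\rangle$, which is linear in $\Omega$ and has no factor of $r$. This term comes from the vertical derivative $E_a(\Omega^b_{ij})=-r^{-1}c_{ac}^{\ b}\Omega^c_{ij}$ (adjoint equivariance of the curvature) together with the bracket part of $\nabla_{E_a}E_b$.

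Your (mm) analysis sees only $\tfrac r2$-type contributions, so it claims the block is at least $(\lambda_{\min}\mathsf N-\tfrac12r^2M_0^2)m^2$. That claim is false. Take $r$ constant, so $\mathsf N=0$. Choose $[U,V]$ parallel to $\Omega(X,Y)\neq0$ and let $r$ be small: the block is then close to a fixed negative number, while your bound is $O(r^2)$. Nor can the term be absorbed into $m^2$ by Cauchy--Schwarz. That costs order $M_0m^2$, which the stated $m^2$ coefficient does not allow. The small-fiber corollary could not absorb it either, since there $\mathsf N$ is only of order $\varepsilon$.

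The missing idea is the Pl\"ucker relation for decomposable $\xi$:
$\zeta_{ia}\zeta_{jb}-\zeta_{ib}\zeta_{ja}=\xi^{\mathrm h}_{ij}\nu_{ab}$.
The linear term is antisymmetric in $(a,b)$, so this relation turns the mixed--mixed linear contribution into exactly $-\tfrac12J$, where $J=\langle\Omega(X,Y),[U,V]\rangle$. That is a horizontal--vertical quantity. The horizontal--vertical cross block contributes only $-J$, which is bounded by $2M_0hv$ via $|[U,V]|=2v$. It does not supply the full coefficient $3$ you attribute to it.

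The total linear contribution $-\tfrac32J$ is what produces the $3M_0hv$ in the statement. Your final constants agree with the statement only because two misattributions cancel, and the (mm) bound as you state it cannot be proved. The decomposability you flag as needing \emph{care} has a precise job: it moves this order-one term out of the $m^2$ coefficient and into the $hv$ coefficient.
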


\subsection{Frame derivatives and curvature blocks}
Choose a base-orthonormal frame normal at the point of calculation, and let $X_i$ be its horizontal lifts. Put $E_a=r^{-1}e_a^\#$ for the fundamental vector fields of the principal right action. At the point,
\begin{equation}\label{eq:framebrackets}
[X_i,X_j]^V=-r\Omega_{ij}^aE_a,
\quad [X_i,E_a]=-\theta_iE_a,
\quad [E_a,E_b]=r^{-1}c_{ab}^{\ c}E_c.
\end{equation}
The curvature components have a nonzero vertical derivative:
\begin{equation}\label{eq:Omega-vertical}
E_a(\Omega_{ij}^b)=-r^{-1}c_{ac}^{\ b}\Omega_{ij}^c.
\end{equation}
It follows from the adjoint-equivariance of the curvature form. This derivative contributes to the linear curvature terms below.

Koszul's formula gives
\begin{align}\label{eq:Koszul}
\nabla_{X_i}X_j&=(\nabla_i^BX_j)^H-\tfrac r2\Omega_{ij}^aE_a,
&\nabla_{X_i}E_a&=\tfrac r2\Omega_{ij}^aX_j,\notag\\
\nabla_{E_a}X_i&=\tfrac r2\Omega_{ij}^aX_j+\theta_iE_a,
&\nabla_{E_a}E_b&=\tfrac1{2r}c_{ab}^{\ c}E_c-\delta_{ab}\theta_iX_i.
\end{align}
Differentiating before evaluating at the normal point yields
\begin{align}
G(R(X_i,E_a)E_b,X_j)
&=\mathsf N_{ij}\delta_{ab}
+\tfrac{r^2}{4}\sum_k\Omega_{ik}^b\Omega_{jk}^a
-\tfrac14\langle[e_a,e_b],\Omega_{ij}\rangle,
\label{eq:mixed1}\\
G(R(X_i,X_j)E_a,E_b)
&=\tfrac{r^2}{4}\sum_k
(\Omega_{ik}^a\Omega_{jk}^b-\Omega_{jk}^a\Omega_{ik}^b)
+\tfrac12\langle[e_a,e_b],\Omega_{ij}\rangle,
\label{eq:mixed2}\\
G(R(X_i,E_a)E_b,E_c)
&=\tfrac r2\sum_k\theta_k(\delta_{ab}\Omega_{ik}^c-\delta_{ac}\Omega_{ik}^b),
\label{eq:mixed3}\\
G(R(X_i,X_j)X_k,E_a)
&=-\tfrac r2((D_i\Omega)_{jk}^a-(D_j\Omega)_{ik}^a)\notag\\
&\quad-\tfrac r2(\theta_i\Omega_{jk}^a-\theta_j\Omega_{ik}^a)
+r\theta_k\Omega_{ij}^a.
\label{eq:mixed4}
\end{align}
For clarity, the linear coefficient in \eqref{eq:mixed1} can be checked directly. The three terms in the curvature definition have horizontal components
\begin{align*}
G(\nabla_{X_i}\nabla_{E_a}E_b,X_j)
 &=\tfrac14c_{ab}^{\ c}\Omega_{ij}^c-\delta_{ab}\partial_i\theta_j,\\
G(\nabla_{E_a}\nabla_{X_i}E_b,X_j)
 &=-\tfrac12c_{ac}^{\ b}\Omega_{ij}^c
 +\tfrac{r^2}{4}\sum_k\Omega_{ik}^b\Omega_{kj}^a,\\
G(\nabla_{[X_i,E_a]}E_b,X_j)&=\delta_{ab}\theta_i\theta_j.
\end{align*}
Since $c_{ac}^{\ b}=-c_{ab}^{\ c}$, subtraction gives $-\frac14\langle[e_a,e_b],\Omega_{ij}\rangle$. If the first block is denoted $M_{ijab}$, Bianchi gives the second block as $M_{jiab}-M_{ijab}$. In \eqref{eq:mixed3}, the derivative of $c_{ab}^{\ c}/(2r)$ cancels its bracket counterpart. The last term in \eqref{eq:mixed4} comes from $-\nabla_{[X_i,X_j]}X_k$.

The pure horizontal numerator is
\[
\mathcal R_G(X\wedge Y,X\wedge Y)
=\mathcal R_B(X\wedge Y,X\wedge Y)-\tfrac34r^2|\Omega(X,Y)|^2
\]
by O'Neill's formula \cite{ONeill}. The vertical curvature has constant-curvature form with coefficient $r^{-2}-|\theta|^2$, from the Gauss equation for the round fibers.

\subsection{The unsuppressed linear term and decomposability}
The terms linear in $\Omega$ require special attention because they have no small factor $r$. Decomposability relates their mixed components to the horizontal and vertical areas. The components of \eqref{eq:decomp} satisfy the identity
\begin{equation}\label{eq:Pluecker}
\zeta_{ia}\zeta_{jb}-\zeta_{ib}\zeta_{ja}=\xi^{\mathrm h}_{ij}\nu_{ab}.
\end{equation}
This follows by expanding the four products. Put $J=\langle\Omega(X,Y),[U,V]\rangle$, identifying normalized vertical coefficient vectors with $\im\Hh$. Antisymmetry gives
\[
\sum_{i,j,a,b,c}c_{ab}^{\ c}\Omega_{ij}^c\zeta_{ia}\zeta_{jb}
=\tfrac12\sum_{i,j,a,b,c}c_{ab}^{\ c}\Omega_{ij}^c\xi^{\mathrm h}_{ij}\nu_{ab}=2J.
\]
The mixed--mixed contribution of \eqref{eq:mixed1} is therefore $-J/2$. In the horizontal--vertical cross block, the sign in $\mathcal R$ and the cross factor two give $-J$ from \eqref{eq:mixed2}. The exact sum is
\begin{equation}\label{eq:linearJ}
-\frac32\langle\Omega(X,Y),[U,V]\rangle.
\end{equation}
It need not vanish or be nonnegative. Since $|[U,V]|=2|U\wedge V|$,
\begin{equation}\label{eq:Jbound}
\left|\tfrac32\langle\Omega(X,Y),[U,V]\rangle\right|\le3M_0hv.
\end{equation}
The resulting loss is therefore controlled by the product $hv$, which can be absorbed by the horizontal and vertical curvature terms.

\subsection{The remaining norm factors}
For the quadratic part of \eqref{eq:mixed1}, the tensor
$T_{ijab}=\sum_k\Omega_{ik}^b\Omega_{jk}^a$ is an index rearrangement of a Gram matrix. Its full array norm is at most $2|\Omega|^2$. Its contraction therefore has absolute value at most $r^2M_0^2m^2/2$. The $\mathsf N$-term contributes at least $\lambda_{\min}(\mathsf N)m^2$.

The quadratic part of \eqref{eq:mixed2} has full ordered norm at most $r^2|\Omega|^2$. Restricting its two skew index pairs to $i<j$ and $a<b$ divides the norm by two; the quadratic-form cross factor is two. Its contribution is at most $r^2M_0^2hv$.

The full norm of \eqref{eq:mixed4} is bounded by
$\sqrt2rM_1+2\sqrt2r|\theta|M_0$. Restricting the first skew pair divides this by $\sqrt2$, then the cross factor two gives $2r(M_1+2|\theta|M_0)hm$.

Finally put $w_i^a=\sum_k\theta_k\Omega_{ik}^a$. Then
\[
\sum_{i,a}(w_i^a)^2\le2|\theta|^2|\Omega|^2,
\qquad
\sum_{a,b,c}(\delta_{ab}w_i^c-\delta_{ac}w_i^b)^2=4|w_i|^2.
\]
The factor four uses the vertical dimension three, not the base dimension. Including $r/2$, restricting to $b<c$, and including the cross factor gives $2r|\theta|M_0mv$. Together with the two pure blocks and \eqref{eq:Jbound}, this proves \cref{prop:fine}. The estimates remain valid when one or more of $h,m,v$ vanish.

\subsection{Small fibers with a fixed concavity margin}
\begin{corollary}\label{cor:smallfiber}
Suppose $B$ is compact, possibly with boundary, and a fixed function $\varphi$ satisfies
\begin{equation}\label{eq:concavity}
-\Hess\varphi\ge\Lambda g_B,
\qquad
\Lambda>M_0^2+\frac{2M_1^2}{\kappa}.
\end{equation}
For all sufficiently small $\varepsilon>0$, the metric with
$r^2=\varepsilon e^{\varepsilon\varphi}$ has positive sectional curvature.
\end{corollary}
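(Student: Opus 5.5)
The proof will substitute $r^2=\varepsilon e^{\varepsilon\varphi}$ into \cref{prop:fine} and check that the right-hand side of \eqref{eq:fine} is a positive definite quadratic form in $(h,m,v)$ whenever $\varepsilon$ is small. First compute the connection quantities. Since $\log r=\tfrac12\log\varepsilon+\tfrac12\varepsilon\varphi$, we have $\theta=\tfrac12\varepsilon\,d\varphi$, so $|\theta|\le C_1\varepsilon$ with $C_1=\tfrac12\sup|d\varphi|$; this is finite because $B$ is compact. Next,
\[
\mathsf N=-r^{-1}\Hess r=-\tfrac12\varepsilon\Hess\varphi-\tfrac14\varepsilon^2\,d\varphi\otimes d\varphi .
\]
The concavity hypothesis therefore gives $\lambda_{\min}\mathsf N\ge\tfrac12\varepsilon\Lambda-C_1^2\varepsilon^2$. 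Also $r^2=\varepsilon(1+O(\varepsilon))$ uniformly on $B$.

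Inserting these into \eqref{eq:fine}, the diagonal coefficients become:
\begin{itemize}
\item $a_h=\kappa-O(\varepsilon)$;
\item $a_m=\varepsilon\bigl(\tfrac12\Lambda-\tfrac12M_0^2\bigr)-O(\varepsilon^2)$;
\item $a_v=\varepsilon^{-1}(1-O(\varepsilon))$.
\end{itemize}
The cross coefficients, written as $-2b\,xy$, become:
\begin{itemize}
\item $b_{hm}=r(M_1+2|\theta|M_0)=\sqrt\varepsilon\,M_1+O(\varepsilon^{3/2})$;
\item $b_{hv}=\tfrac12(3M_0+r^2M_0^2)=O(1)$;
\item $b_{mv}=r|\theta|M_0=O(\varepsilon^{3/2})$.
\end{itemize}

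Positivity of a symmetric $3\times3$ form follows if its leading principal minors are positive. A cleaner route is a Schur complement: first absorb the $v$-terms using the huge vertical coefficient. Writing the form as $a_v v^2-2v(b_{hv}h+b_{mv}m)+Q_2(h,m)$, it is bounded below by
\[
Q_2(h,m)-a_v^{-1}(b_{hv}h+b_{mv}m)^2 .
\]
Here $a_v^{-1}=O(\varepsilon)$, so this correction changes $a_h$ by $O(\varepsilon)$, changes $b_{hm}$ by $O(\varepsilon^{5/2})$, and changes $a_m$ by $O(\varepsilon^4)$. All of these are negligible at the relevant orders. The reduced $2\times2$ form has determinant
\[
a_ha_m-b_{hm}^2=\varepsilon\Bigl[\kappa\cdot\tfrac12(\Lambda-M_0^2)-M_1^2\Bigr]+O(\varepsilon^2).
\]
The bracket is positive exactly when $\Lambda>M_0^2+2M_1^2/\kappa$, which is \eqref{eq:concavity}; the trace is clearly positive. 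Hence for small $\varepsilon$ the form is bounded below by $c\,\varepsilon(h^2+m^2+v^2)$ with $c>0$. Since $|\xi|^2=h^2+m^2+v^2>0$ for a nonzero decomposable bivector, every sectional curvature is positive.

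The main point needing care is the mixed-coefficient bookkeeping. The $hm$ cross term and the mixed diagonal term are both of order $\varepsilon$ after squaring, so they compete at the same order, and the hypothesis on $\Lambda$ is precisely what makes the leading determinant positive. I would also verify uniformity: all $O(\cdot)$ constants depend only on $\sup|\varphi|$, $\sup|d\varphi|$, $M_0$, $M_1$, $\kappa$, $\Lambda$, all finite by compactness. The boundary causes no difficulty because the estimate is pointwise.
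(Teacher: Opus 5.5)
Your proposal is correct and follows the paper's argument. You substitute $\theta=\tfrac\varepsilon2 d\varphi$ and $\mathsf N=-\tfrac\varepsilon2\Hess\varphi-\tfrac{\varepsilon^2}4 d\varphi\otimes d\varphi$ into \eqref{eq:fine} and reduce positivity to $\kappa(\Lambda-M_0^2)/2-M_1^2>0$. The only difference is bookkeeping: you eliminate $v$ by a Schur complement and expand the $2\times2$ determinant in $\varepsilon$, whereas the paper rescales to $M=\sqrt\varepsilon\,m$, $V_*=v/\sqrt\varepsilon$ so that the coefficient matrix converges uniformly to a fixed positive definite limit, and then concludes via $H^2+M^2+V_*^2\ge\varepsilon|\xi|^2$.
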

\begin{proof}
Let $D_0=\sup|d\varphi|$ and $\mathcal{E}=e^{\varepsilon\varphi}$. Then
\[
\theta=\tfrac\varepsilon2d\varphi,
\qquad
\mathsf N=-\tfrac\varepsilon2\Hess\varphi
-\tfrac{\varepsilon^2}{4}d\varphi\otimes d\varphi.
\]
In \eqref{eq:fine}, use $H=h$, $M=\sqrt\varepsilon\,m$ and $V_*=v/\sqrt\varepsilon$. A lower-bound coefficient matrix has entries
\begin{align*}
Q_{11}&=\kappa-\tfrac34\varepsilon \mathcal{E} M_0^2,
&Q_{12}&=-\sqrt{\mathcal{E}}(M_1+\varepsilon D_0M_0),\\
Q_{22}&=\tfrac\Lambda2-\tfrac\varepsilon4D_0^2-\tfrac12\mathcal{E} M_0^2,
&Q_{13}&=-\tfrac{\sqrt\varepsilon}{2}(3M_0+\varepsilon \mathcal{E} M_0^2),\\
Q_{33}&=\mathcal{E}^{-1}-\tfrac{\varepsilon^3}{4}D_0^2,
&Q_{23}&=-\tfrac{\varepsilon^{3/2}}2\sqrt{\mathcal{E}} D_0M_0.
\end{align*}
The nonnegative variables $H,M,V_*$ allow negative off-diagonal entries to bound the possible losses. Uniformly on the compact base, the matrix tends to
\begin{equation}\label{eq:Qlimit-general}
\begin{pmatrix}
\kappa&-M_1&0\\
-M_1&(\Lambda-M_0^2)/2&0\\
0&0&1
\end{pmatrix}.
\end{equation}
Its upper-left determinant is $\kappa(\Lambda-M_0^2)/2-M_1^2>0$. Thus it is positive definite, and the matrices are uniformly positive for small $\varepsilon$. Since
$H^2+M^2+V_*^2\ge\varepsilon|\xi|^2$ for $0<\varepsilon\le1$, the assertion follows.
\end{proof}
The concavity hypothesis explains our use of a cap with boundary. It supplies the positive term on mixed planes; for comparison, a flat connection with constant radius has zero curvature on horizontal--vertical product planes.

\section{Positive metrics on the two quotient disks}\label{sec:disks}
We first choose the two source metrics for the Kervaire model. On the southern side, a round cap provides the concave function required by \cref{cor:smallfiber}. On the northern side, we choose a different radial profile so that the boundary radii match. This source metric need only be positive on the planes horizontal for the star quotient. The profiles and parameter strategy follow \cite[Sections 5 and 8]{DHZ}.

\subsection{Southern connection and fixed data}
Fix
\[
\tau_0=2\pi/3,\qquad F_0=\sin\tau_0=\sqrt3/2,
\qquad \gamma=\beta^{-1}.
\]
The southern base is the round cap $\tau_0\le\tau\le\pi$ with metric
\[
h_B=d\tau^2+\sin^2\tau\,h_{S^8}.
\]
Choose
\[
\chi_S(\tau)=\mathfrak h\!\left(\frac{3(\tau-\tau_0)}{\pi-\tau_0}-1\right).
\]
It is zero on a boundary collar and one near the south pole. Define the connection potentials in the common north gauge and the extending south gauge by
\begin{equation}\label{eq:potentials}
\mathsf A_N=\chi_S\gamma^{-1}d\gamma,
\qquad
\mathsf A_S=(\chi_S-1)d\gamma\,\gamma^{-1}.
\end{equation}
They obey
$\mathsf A_N=\gamma^{-1}\mathsf A_S\gamma+\gamma^{-1}d\gamma$, so they define one connection
$\omega=u^{-1}\mathsf Au+u^{-1}du$. Near the south pole $\mathsf A_S=0$; near the boundary $\mathsf A_N=0$. In particular, the extending gauge is smooth at the center, while the north gauge is convenient at the seam. In the common north gauge, writing $\vartheta_0=\gamma^{-1}d\gamma$ gives
\[
\Omega_N=d\chi_S\wedge\vartheta_0
+\chi_S(\chi_S-1)\vartheta_0\wedge\vartheta_0.
\]
Equivariance of $\gamma$ makes the connection star-invariant. Its smoothness on the compact cap gives finite $M_0=\sup|\Omega|$ and $M_1=\sup|D\Omega|$, computed with $h_B$. These constants are fixed before the fiber parameter is chosen.

Set
\begin{equation}\label{eq:parameters}
\begin{gathered}
B_K=1+M_0+M_1,\qquad
\Lambda=M_0^2+2M_1^2+2,\qquad
A_K=2\Lambda,\\
\lambda_0=(M_1^2+2)^{-1},\qquad
\delta=\frac1{16B_K},\qquad
\varepsilon_K=\frac1{2^{16}B_K^{12}}.
\end{gathered}
\end{equation}
The connection is fixed first. Thus $M_0$ and $M_1$ are fixed finite numbers, and the following choices of radii depend only on these data.

\subsection{The complete southern source}
Define
\begin{equation}\label{eq:GS}
G_S=h_B+r_S^2Q(\omega,\omega),\qquad
r_S(\tau)^2=\varepsilon_K e^{-\varepsilon_K A_K\cos\tau}.
\end{equation}
For $\varphi=-A_K\cos\tau$,
\[
-\Hess_{h_B}\varphi=-A_K\cos\tau\,h_B\ge\Lambda h_B.
\]
This satisfies \eqref{eq:concavity} with $\kappa=1$. More precisely, \cref{app:parameters} verifies for \eqref{eq:parameters} that
\begin{equation}\label{eq:southlower}
\sec(G_S)\ge\lambda_0\varepsilon_K/2>0.
\end{equation}
The star action is free and isometric. O'Neill's formula yields a metric $g_S$ on the entire quotient disk with the same lower bound. At the pole, the connection is flat in its extending gauge and $r_S(\pi-\rho)$ is an even smooth function of the polar distance $\rho$. This proves smoothness of the quotient metric at the pole.

The boundary data are
\begin{equation}\label{eq:southdata}
\varrho_\partial=r_a^2=\varepsilon_K e^{\varepsilon_K A_K/2},\qquad
q_0=\frac{r_S'(\tau_0)}{r_a}=\frac{\varepsilon_K A_K F_0}{2},\qquad
\mu_S=\cot\tau_0=-1/\sqrt3.
\end{equation}
In the common gauge, a normalized angular--fiber vector $X+U$ has southern outward second fundamental form
\begin{equation}\label{eq:BS}
B_S(X+U,X+U)=-\mu_S|X|^2-q_0|U|^2.
\end{equation}

\subsection{The northern radial profiles}
Let $\chi_N(v)=\mathfrak h(4v-1)$. Then $\chi_N=0$ for $v\le1/4$ and $\chi_N=1$ for $v\ge1/2$; a direct bound gives $M_\chi:=\sup|\chi_N'|\le32$. Define $F$ implicitly by
\begin{equation}\label{eq:Fimplicit}
s=F+\frac{F^3}{3\delta^2},\qquad
\ell=F_0+\frac{F_0^3}{3\delta^2},
\end{equation}
so $F(\ell)=F_0$. Put $d=r_aq_0$ and
\begin{equation}\label{eq:rN}
r_N(s)=r_a-d\int_s^\ell\chi_N(v/\delta)\,dv.
\end{equation}
On $D^9\times S^3$ take the flat-connection metric
\begin{equation}\label{eq:GN}
G_N=ds^2+F(s)^2h_{S^8}+r_N(s)^2h_{S^3}.
\end{equation}
A radial diffeomorphism preserving $z$ identifies this disk with the original north disk and preserves its boundary marking.

The explicit parameters satisfy
\begin{equation}\label{eq:deltaconditions}
\delta\le F_0/2,\qquad
\delta^2\le\frac1{16A_K F_0^2},\qquad
\delta^3\le\frac1{8A_K F_0(1+M_\chi)},
\end{equation}
and $q_0\ell<1/2$, $d<1$; see \cref{app:parameters}. Consequently
\begin{equation}\label{eq:rcontrol}
\frac{r_a}{2}\le r_N\le r_a,\quad
r_N'=d\chi_N(s/\delta)\ge0,\quad
r_N''=\frac d\delta\chi_N'(s/\delta)\ge0,\quad
\frac d{r_N^3}\le4A_K F_0.
\end{equation}
Near zero, $F$ is an odd smooth function with
$F(s)=s-s^3/(3\delta^2)+O(s^5)$, while $r_N$ is constant. Hence \eqref{eq:GN} extends smoothly over the center. At its endpoint,
\begin{equation}\label{eq:northdata}
F(\ell)=F_0,\quad r_N(\ell)=r_a,\quad
r_N'(\ell)/r_a=q_0,\quad
\mu_N=F'(\ell)/F_0>0.
\end{equation}

\subsection{Curvature on quotient-horizontal planes}
At fiber coordinate $u=1$, a star generator has normalized angular and vertical components $(F\mathcal V_z\xi,r_N\xi)$. Thus a star-horizontal vector has the form
\begin{equation}\label{eq:horizontalgraph}
\lambda\partial_s+X+U,\qquad U=-\frac F{r_N}\mathcal V_z^*X.
\end{equation}
Principal right translation transports this formula to every fiber coordinate. For two such vectors, \eqref{eq:Kbound} gives
\begin{align}
|\lambda V-\mu U|^2
&\le\frac{F^2}{r_N^2}|\lambda Y-\mu X|^2,
\label{eq:graph1}\\
|X\otimes V-Y\otimes U|^2
&\le\frac{2F^2}{r_N^2}|X\wedge Y|^2.
\label{eq:graph2}
\end{align}
For the second inequality, an area-preserving change of the pair $X,Y$ makes them orthogonal without changing either alternating expression. The resulting two tensor terms are orthogonal and each is controlled by the operator norm. If $X,Y$ are dependent, both sides vanish.

The full doubly warped sectional numerator is
\begin{align}\label{eq:warpedcurvature}
\mathcal K_N={}&-\frac{F''}{F}|\lambda Y-\mu X|^2
-\frac{r_N''}{r_N}|\lambda V-\mu U|^2\notag\\
&+\frac{1-F'^2}{F^2}|X\wedge Y|^2
+\frac{1-r_N'^2}{r_N^2}|U\wedge V|^2
-\frac{F'r_N'}{Fr_N}|X\otimes V-Y\otimes U|^2.
\end{align}
This is the complete five-block formula, as in \cite[Section 5.3]{DHZ}. Using \eqref{eq:graph1} and \eqref{eq:graph2}, it suffices to prove positivity of
\begin{equation}\label{eq:PradPang}
P_{\mathrm{rad}}=-\frac{F''}{F}-\frac{F^2r_N''}{r_N^3},\qquad
P_{\mathrm{ang}}=\frac{1-F'^2}{F^2}-\frac{2FF'r_N'}{r_N^3}.
\end{equation}
The remaining vertical coefficient is positive because $r_N'<1$.

Put $x=F/\delta$. Direct differentiation of \eqref{eq:Fimplicit} gives
\begin{equation}\label{eq:Fidentities}
-\frac{F''}{F}=\frac2{\delta^2(1+x^2)^3},\qquad
\frac{1-F'^2}{F^2}=\frac{2+x^2}{\delta^2(1+x^2)^2}.
\end{equation}
Moreover,
\[
\frac{(1-F'^2)/F^2}{FF'}
=\frac{2+x^2}{\delta^3x(1+x^2)}
\ge\frac1{\delta^2F_0}.
\]
Together with \eqref{eq:deltaconditions} and \eqref{eq:rcontrol}, this makes the angular loss at most half the positive angular term. On the support of $r_N''$, one has $s\le\delta/2$ and $F\le s$, so
\[
-\frac{F''}{F}\ge\frac{128}{125\delta^2},\qquad
\frac{F^2r_N''}{r_N^3}\le A_K F_0M_\chi\delta\le\frac1{8\delta^2}.
\]
Outside that support the radial loss is zero. Thus both coefficients in \eqref{eq:PradPang} are positive.

Projection of the horizontal graph to the radial--angular base tangent space is injective. For an independent pair its radial and angular base areas cannot both vanish. It follows that \eqref{eq:warpedcurvature} is positive on every star-horizontal two-plane. At the center the star orbit is the fiber, and the horizontal space is the base tangent space, whose sectional curvature tends to $2/\delta^2$. O'Neill's formula now gives a positively curved metric on the complete northern quotient disk.

For a quantitative bound, set
\[
T=F_0^2/\delta^2,\qquad
 a_*=[\delta^2(1+T)^3]^{-1},\qquad
H_*=1+4F_0^2/\varrho_\partial.
\]
The estimates above give $P_{\mathrm{rad}}\ge a_*$ and
$P_{\mathrm{ang}}\ge(2+T)/(2\delta^2(1+T)^2)\ge a_*$. The horizontal graph has squared operator norm at most $H_*$. Passing to second exterior powers therefore gives
\begin{equation}\label{eq:northlower}
\sec(g_N)\ge H_*^{-2}a_*
=\frac{\delta^4}{(\delta^2+3/4)^3}
\left(\frac{\varrho_\partial}{\varrho_\partial+3}\right)^2>0.
\end{equation}

\section{Boundary matching and the two positive metrics}\label{sec:gluing}
The source metrics constructed above give positive metrics on two quotient disks. To obtain a metric on $K$, we must compare their boundary values in the coordinates fixed in \cref{sec:bundle}. We first record the quotient formula and then check the second fundamental forms.

\subsection{The quotient metric}
The quotient metric can be written conveniently in terms of its inverse. In a local section $u=1$, write a source tangent vector as $(v,\xi)$ and the source norm as
\[
h(v,v)+r^2|\xi+\mathsf Av|^2.
\]
The quotient differential of \eqref{eq:quotient-coordinate} is
$P(v,\xi)=v-\mathcal V\xi$. With $\zeta=\xi+\mathsf Av$, this becomes
$P(v,\xi)=(I+\mathcal V\mathsf A)v-\mathcal V\zeta$. Consequently the quotient cometric is
\begin{equation}\label{eq:cometric}
g^{-1}=(I+\mathcal V\mathsf A)h^{-1}(I+\mathcal V\mathsf A)^*+r^{-2}\mathcal V\mathcal V^*.
\end{equation}
This is the quotient identity in \cite[Equation (6.2)]{DHZ}; the adjoints here mean the natural dual/transpose maps in fixed local bases.

On the northern disk $\mathsf A=0$. If $\mathcal V_a$ are the three fields associated with a $Q$-orthonormal basis, then
\[
g_N^{-1}=\partial_s\otimes\partial_s+F^{-2}h_{S^8}^{-1}
+r_N^{-2}\sum_{a=1}^3\mathcal V_a\otimes \mathcal V_a.
\]
Writing $\kappa_a=h_{S^8}(\mathcal V_a,\cdot)$ and using \eqref{eq:Kbound}, the covariant form is
\begin{equation}\label{eq:northcovariant}
g_N=ds^2+F^2h_{S^8}
-\frac{F^4}{r_N^2+F^2(1-t^2)}
\sum_{a=1}^3\kappa_a\otimes\kappa_a.
\end{equation}
At the center these polar formulas are interpreted using the smooth quotient construction already established. The southern quotient metric is given by \eqref{eq:cometric} with its extending-gauge potential \eqref{eq:potentials}, round cap metric, and radius \eqref{eq:GS}.

\subsection{Equality under the prescribed attaching map}
In the common north boundary gauge, both source boundary metrics are exactly
\begin{equation}\label{eq:sourceboundary}
F_0^2h_{S^8}+\varrho_\partial h_{S^3}.
\end{equation}
Their star actions agree. Thus the quotient boundaries are isometric under the common-gauge identification, whose expression in the two extending disk markings is \eqref{eq:marking}. Hence
\begin{equation}\label{eq:boundarymetric}
h_N=J_\beta^*h_S=:h_\partial.
\end{equation}
For a quotient boundary vector $Y$, choose its common horizontal lift $X+U$. The source outward normals $\partial_s$ and $-\partial_\tau$ are star-horizontal. Their second fundamental forms restrict to the corresponding quotient forms. By \eqref{eq:BS} and \eqref{eq:northdata},
\[
\II_N(Y,Y)=\mu_N|X|^2+q_0|U|^2,\qquad
(J_\beta^*\II_S)(Y,Y)=-\mu_S|X|^2-q_0|U|^2.
\]
The fiber terms cancel exactly:
\begin{equation}\label{eq:shapesum}
(\II_N+J_\beta^*\II_S)(Y,Y)=(\mu_N-\mu_S)|X|^2>0
\quad(Y\ne0).
\end{equation}
Strictness follows from the injective horizontal graph and $\mu_N>0>\mu_S$. Quantitatively,
\begin{equation}\label{eq:shapequant}
\II_N+J_\beta^*\II_S
\ge\frac{\varrho_\partial}{\varrho_\partial+F_0^2}(\mu_N-\mu_S)h_\partial>0.
\end{equation}
The isometry in \eqref{eq:boundarymetric} uses $J_\beta$, so the glued manifold is the Kervaire sphere identified in \cref{thm:topology}. The data used in this comparison are
\begin{center}
\small\renewcommand{\arraystretch}{1.15}
\begin{tabular}{@{}lcc@{}}
\toprule
Boundary datum & Northern disk & Southern disk, common gauge\\
\midrule
Base radius & $F_0$ & $F_0$\\
Auxiliary radius & $r_a$ & $r_a$\\
Outward normal & $\partial_s$ & $-\partial_\tau$\\
Base logarithmic derivative along the normal & $\mu_N$ & $-\mu_S$\\
Fiber logarithmic derivative along the normal & $q_0$ & $-q_0$\\
\bottomrule
\end{tabular}
\end{center}
The logarithmic derivatives are the coefficients of the second fundamental form on normalized source components $X$ and $U$. Both sides use the same quotient-horizontal lift.

\subsection{A local smoothing lemma}\label{sec:localsmoothing}
We record the smoothing statement used at all the seams in this paper. Its conclusion is the positive-sectional-curvature case of Reiser--Wraith \cite[Theorem A(i) and Corollary B]{RW}. We give a direct proof by convolution in a common normal coordinate in \cref{app:localgluing}, so that our metric constructions do not require that theorem as an input. The point to check is the quadratic error introduced by averaging the first normal derivative. The nonnegative derivative jump controls this error even when it has a kernel.

\begin{lemma}[Smoothing with a strict sectional-curvature threshold]\label{lem:localgluing}
Let $(M_-,g_-)$ and $(M_+,g_+)$ be compact smooth Riemannian manifolds of the same dimension, with distinguished boundary components identified by a smooth isometry $\phi$. Suppose that $\sec(g_\pm)>\kappa$ for a fixed real number $\kappa$, and that
\[
\II_-+\phi^*\II_+\ge0
\]
on the identified boundary, using outward normals. For every neighborhood of the seam in $M_-\cup_\phi M_+$, there is a smooth metric $g$ with $\sec(g)>\kappa$ which equals the original metrics outside that neighborhood. Other boundary components are unchanged.
\end{lemma}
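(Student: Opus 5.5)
We work in a common normal collar. Near the seam we use Fermi coordinates $(t,x)$, $t\in(-t_0,t_0)$, with $t<0$ in $M_-$ and $t>0$ in $M_+$, where $t$ is signed distance to the seam and $x$ is a boundary coordinate transported by $\phi$. In these coordinates
\[
g_\pm=dt^2+h_\pm(t),\qquad h_-(0)=h_+(0)=h_\partial .
\]
Gluing gives a metric $g_0=dt^2+h(t)$, where $h$ is continuous and piecewise smooth. Its one-sided derivatives at $t=0$ are $\partial_th(0^-)=-2\II_-$ and $\partial_th(0^+)=2\phi^*\II_+$, since the outward normal of $M_-$ is $\partial_t$ and that of $M_+$ is $-\partial_t$ (up to the paper's sign convention for $\II$). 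The jump $[\partial_th]=2(\II_-+\phi^*\II_+)$ is therefore a nonnegative symmetric form. We will mollify only in the $t$ variable: set $h_\epsilon=h*\psi_\epsilon$, with $\psi_\epsilon$ an even mollifier supported in $|t|<\epsilon$. Then we interpolate back to $h$ with a cutoff supported in $|t|<2\epsilon$, so the metric is unchanged outside any prescribed neighborhood once $\epsilon$ is small.

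Next we compute the curvature of $dt^2+h_\epsilon(t)$ by the Gauss--Codazzi--Riccati formulas. Tangential curvature is given by Gauss, $R^{h_\epsilon}+$ quadratic terms in $\partial_th_\epsilon$. Mixed $t$--tangential components come from Codazzi and involve $x$-derivatives of $\partial_th_\epsilon$. Radial curvature is given by
\[
\mathcal R(\partial_t\wedge X,\partial_t\wedge X)=-\tfrac12\partial_t^2h_\epsilon(X,X)+\tfrac14|\partial_th_\epsilon X|^2_{h_\epsilon}.
\]
Every term except $\partial_t^2h_\epsilon$ stays uniformly bounded and converges, away from $t=0$, to the corresponding term of $g_\pm$. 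Near $t=0$ it is within $o(1)$ of a convex combination of the one-sided values, because $h_\epsilon\to h$ in $C^0$ and $\partial_th_\epsilon$ is an average of the one-sided derivatives. The second derivative splits as
\[
\partial_t^2h_\epsilon=(\partial_t^2h)_{\mathrm{reg}}*\psi_\epsilon+\psi_\epsilon(t)\,[\partial_th].
\]
The singular part contributes $-\tfrac12\psi_\epsilon(t)[\partial_th](X,X)\le0$ to the $R(\partial_t,X,X,\partial_t)$ term under the paper's sign convention, $\mathcal R(A\wedge B,A\wedge B)=G(R(A,B)B,A)$. This is the step that needs care: in the curvature term $\mathcal R(\partial_t\wedge X)$ the jump enters with the favourable sign, $+\tfrac12\psi_\epsilon[\partial_th](X,X)\ge0$. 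We check this against the convexity case, a doubled round cap with convex boundary.

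The main obstacle is that the jump only enters the radial block. For a general plane $\sigma=(a\partial_t+X)\wedge Y$ with $X,Y$ tangential, the sectional numerator is
\[
a^2\mathcal R(\partial_t\wedge Y,\partial_t\wedge Y)+2a\,\mathcal R(\partial_t\wedge Y,X\wedge Y)+\mathcal R(X\wedge Y,X\wedge Y).
\]
The singular term $a^2\psi_\epsilon[\partial_th](Y,Y)$ is large but only semidefinite, and it may have a kernel. The cross term also carries $\epsilon$-independent, bounded Codazzi errors. The errors introduced by averaging are of two kinds.

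First, there is the quadratic term $\tfrac14|\partial_th_\epsilon Y|^2$ compared with the averages of $\tfrac14|\partial_th_\pm Y|^2$. By convexity of the square, the averaged value is at most the average of the squares. The defect is therefore of order $\psi$-weight times $|[\partial_th]Y|^2$, and we absorb it by $C\,[\partial_th](Y,Y)$ using $[\partial_th]\ge0$ via $|[\partial_th]Y|^2\le\|[\partial_th]\|\,[\partial_th](Y,Y)$. The contribution is bounded by $\epsilon$-weight times $\psi_\epsilon$, which is dominated by $\psi_\epsilon[\partial_th](Y,Y)$ for small $\epsilon$. This is exactly where the kernel of the jump is harmless.

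Second, the Codazzi cross term at $t\approx0$ involves $D_x[\partial_th]$. Because $[\partial_th]$ is a nonnegative smooth form, the standard Glaeser-type bound $|D[\partial_th](Y,\cdot)|^2\le C\,[\partial_th](Y,Y)$ lets Cauchy--Schwarz split $2a\cdot O(\epsilon\psi_\epsilon)\sqrt{[\partial_th](Y,Y)}$ into a small multiple of $a^2\psi_\epsilon[\partial_th](Y,Y)$ plus $o(1)$. The remaining bounded terms lie within $o(1)$ of convex combinations of $\sec(g_\pm)$ quadratic forms. We handle them by compactness: the space of unit planes over the collar is compact, strictness $\sec(g_\pm)>\kappa$ gives a uniform margin $\kappa+2\eta$, and taking $\epsilon$ small yields $\sec>\kappa+\eta$ in $|t|<\epsilon$.

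Finally, in the interpolation region $\epsilon\le|t|\le2\epsilon$ the metric is smooth, and $h_\epsilon\to h_\pm$ in $C^2$ there up to the cutoff derivatives. Because each one-sided $h$ is smooth up to $t=0$, we arrange the cutoff so that this region lies where the mollified and original metrics differ by $O(\epsilon)$ in $C^2$. Concretely, we mollify the smooth extensions of $h_\pm$ across $t=0$ and blend them, which keeps the curvature within $\eta$. Other boundary components are untouched because the modification is local to the collar.
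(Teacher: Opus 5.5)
Your architecture matches the paper's. You use a common normal collar and convolve in $t$ only. The concentrated term $\tfrac12\psi_\epsilon D$, with $D=2(\II_-+\phi^*\II_+)\ge0$, appears in the normal block. There you absorb the Jensen defect $-\tfrac14\lambda(1-\lambda)|DY|^2$ via $|DY|^2\le\|D\|\,D(Y,Y)$ and $\lambda(1-\lambda)\le\epsilon\psi_\epsilon(t)$, where $\lambda$ is the kernel mass sampling $M_-$.

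The genuine gap is in the tangential block. You claim that near $t=0$ every term other than $\partial_t^2h_\epsilon$ is within $o(1)$ of a convex combination of one-sided values, and that the jump enters only the radial block. But the Gauss term $-\tfrac14\Delta_S(X,Y)$, with $\Delta_S(X,Y)=S(X,X)S(Y,Y)-S(X,Y)^2$ and $S=\partial_th$, is quadratic in $S$. Write $A=h'(0^-)$ and $B=h'(0^+)$, so $D=A-B$ and $\partial_th_\epsilon=\lambda A+(1-\lambda)B+O(\epsilon)$. The exact identity $\lambda\Delta_A+(1-\lambda)\Delta_B-\Delta_{\lambda A+(1-\lambda)B}=\lambda(1-\lambda)\Delta_D$ then shows that the smoothed tangential curvature exceeds the averaged one by $\tfrac14\lambda(1-\lambda)\Delta_D+O(\epsilon)$. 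This is of order one across the crossing strip. It has the right sign only because $D\ge0$ gives $\Delta_D\ge0$ on every two-plane; an indefinite jump could break this block. So the hypothesis $\II_-+\phi^*\II_+\ge0$ is needed in the tangential block as well, and this variance identity is the missing step in your lower bound there.

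Several other steps also need correction.

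\emph{Cross block.} It has no $\epsilon$-independent error. The Codazzi expression is linear in $S$ and its first tangential derivatives, and tangential differentiation commutes with the $t$-average. Every Christoffel symbol involved is $\Gamma(h_0)+O(\epsilon)$. Hence the smoothed cross block equals the averaged one up to $O(\epsilon)$. The Glaeser-type bound you invoke, $|(\nabla D)(Y,\cdot)|^2\le C\,D(Y,Y)$, is false for semidefinite families: take $D(x)=\bigl(\begin{smallmatrix}x^2&x\\x&1\end{smallmatrix}\bigr)$ at $x=0$ with $Y=e_1$. That step should be removed rather than repaired.

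\emph{Localization.} It fails as written. A cutoff passing from $1$ to $0$ on $\epsilon\le|t|\le2\epsilon$ has second derivative of order $\epsilon^{-2}$. On that region $h_\epsilon-h=\tfrac12\epsilon^2\bigl(\int v^2\psi\bigr)h''+O(\epsilon^3)$, so the cutoff term changes the normal curvature by an amount of order one and of either sign. Instead, fix the cutoff width before $\epsilon$, inside the prescribed neighbourhood. On its transition region the convolution then samples one smooth side and converges in $C^2$.

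\emph{Signs.} With $\II(X,Y)=G(\nabla_Xn,Y)$ one has $h'(0^-)=2\II_-$ and $h'(0^+)=-2\phi^*\II_+$. So the jump $h'(0^+)-h'(0^-)=-D$ enters $\partial_t^2h_\epsilon$ as $-\psi_\epsilon D$ and contributes $+\tfrac12\psi_\epsilon D(X,X)$; your text asserts both signs.

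\emph{Kernel.} The mass bound $\lambda(1-\lambda)\le\epsilon\psi_\epsilon(t)$ that you use implicitly requires a kernel that is nonincreasing on $[0,1]$.
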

The proof is given in \cref{app:localgluing}. It applies to semidefinite sums as well as definite sums; this distinction is used at the right seam in \cref{sec:newcaps}.

\subsection{Smoothing and a quantitative bound}
\begin{theorem}\label{thm:positive}
The Kervaire model of \cref{thm:topology}, with the disk metrics constructed above, admits a smooth metric $g_K$ satisfying
\begin{equation}\label{eq:mainbound}
\sec(g_K)>2^{-53}(1+M_0+M_1)^{-28}>0.
\end{equation}
Here $M_0$ and $M_1$ are the norms of the curvature and its first covariant derivative for the fixed southern connection in \cref{sec:disks}.
\end{theorem}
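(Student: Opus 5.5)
The plan is to assemble the pieces already in place: the two quotient disk metrics $g_N$ and $g_S$, their boundary comparison, and \cref{lem:localgluing}. The work is in turning the three explicit lower bounds into one bound in terms of $B_K=1+M_0+M_1$.

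First, the southern quotient metric satisfies $\sec(g_S)\ge\lambda_0\varepsilon_K/2$ by \eqref{eq:southlower} and O'Neill's formula. Here $\lambda_0=(M_1^2+2)^{-1}\ge 2^{-1}B_K^{-2}$ and $\varepsilon_K=2^{-16}B_K^{-12}$. This gives $\sec(g_S)\ge 2^{-18}B_K^{-14}$.

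Second, I would bound the northern estimate \eqref{eq:northlower} from below. Write
\[
\frac{\delta^4}{(\delta^2+3/4)^3}\left(\frac{\varrho_\partial}{\varrho_\partial+3}\right)^2 .
\]
Since $\delta=1/(16B_K)\le1/16$, the first factor is at least $\delta^4$, up to the constant $(3/4+2^{-8})^{-3}\ge1$. Hence it is at least $2^{-16}B_K^{-4}$. For the second factor, $\varrho_\partial=\varepsilon_K e^{\varepsilon_KA_K/2}\ge\varepsilon_K$ and $\varrho_\partial\le1$. So $\varrho_\partial/(\varrho_\partial+3)\ge\varepsilon_K/4$, and its square is at least $2^{-4}\cdot2^{-32}B_K^{-24}$. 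Multiplying, $\sec(g_N)\ge 2^{-52}B_K^{-28}$. The exponent $28=4+24$ shows this is the origin of the stated power. Because $2^{-52}B_K^{-28}\le2^{-18}B_K^{-14}$, the common threshold is $\kappa:=2^{-53}B_K^{-28}$, and both disks satisfy $\sec>\kappa$ strictly.

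Third, the gluing. By \eqref{eq:boundarymetric}, $J_\beta$ is an isometry of the boundaries. By \eqref{eq:shapesum}, the sum of the outward second fundamental forms is positive definite, in particular $\ge0$. \Cref{lem:localgluing} with threshold $\kappa$ then produces a smooth metric $g_K$ on $D_N^9\cup_{J_\beta}D_S^9$ with $\sec(g_K)>\kappa$, agreeing with $g_N$ and $g_S$ away from the seam. By \cref{thm:topology} and \eqref{eq:marking}, this manifold is $\Kerv$.

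The main obstacle is the second step. I must make sure the inequalities assumed from \eqref{eq:deltaconditions} and \cref{app:parameters} actually hold for the chosen parameters, so that \eqref{eq:northlower} is valid with these $\delta,\varrho_\partial$. I must also confirm that $\varrho_\partial\le1$, which follows from $\varepsilon_KA_K\le1$ since $A_K\le 2B_K^2+4$. The remaining risk is the bookkeeping of powers of two; I would verify each factor separately and keep slack so that the final strict inequality holds with the constant $2^{-53}$.
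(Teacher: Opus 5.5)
Your proposal is correct and follows the paper's proof. Both arguments bound the southern and northern disk curvatures below by explicit powers of $B_K$, take the common strict threshold $\varepsilon_K^2\delta^4/32=2^{-53}B_K^{-28}$, and apply \cref{lem:localgluing} with the boundary data \eqref{eq:boundarymetric} and \eqref{eq:shapesum}. One small correction: $A_K=2M_0^2+4M_1^2+4$ need not satisfy $A_K\le2B_K^2+4$ (take $M_0=0$ and $M_1$ large), but the bound $A_K\le4B_K^2$ used in \cref{app:parameters} still gives $\varepsilon_KA_K<1$, hence $\varrho_\partial\le1$.
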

\begin{proof}
We apply \cref{lem:localgluing}, proved in \cref{app:localgluing}, with a strict lower curvature threshold shared by the two disks. The parameters of the disk metrics are fixed before choosing the smoothing width in that lemma.

Put
\[
\kappa_*=
\min\left\{\frac{\lambda_0\varepsilon_K}{2},\,
\frac{\delta^4}{(\delta^2+3/4)^3}
\left(\frac{\varrho_\partial}{\varrho_\partial+3}\right)^2\right\}>0.
\]
By \eqref{eq:parameters}, $\varrho_\partial\ge\varepsilon_K$, $\varepsilon_K<1/6$, and $\delta^2+3/4<1$. Thus \eqref{eq:northlower} gives
\[
\sec(g_N)>\frac{\varepsilon_K^2\delta^4}{16}.
\]
Also $\varepsilon_K<1/A_K$ implies $\varepsilon_K(M_1^2+2)<1/2$, hence
$\lambda_0\varepsilon_K/2>\varepsilon_K^2>\varepsilon_K^2\delta^4/16$.
Both disks therefore have a strict common lower threshold $\varepsilon_K^2\delta^4/32$. The boundary conditions \eqref{eq:boundarymetric} and \eqref{eq:shapequant}, together with \cref{lem:localgluing}, yield a smooth metric on $\Kerv$ satisfying
\[
\sec(g_K)>\frac{\varepsilon_K^2\delta^4}{32}
=\frac1{2^{53}(1+M_0+M_1)^{28}}.
\]
This proves the theorem.
\end{proof}
This proves \cref{thm:kervaire-existence} and gives the strictly positive metric underlying our answer to the Kervaire nonnegative-curvature question in dimension nine. The lower bound depends only on the chosen smooth connection and makes the small-parameter argument quantitative.

\begin{remark}[Meaning of the quantitative estimate]\label{rem:quantitative}
The base cap has curvature one, the inner product $Q$ is normalized in \cref{sec:connection}, and the connection and cutoffs are fixed in \cref{sec:disks}. Thus $M_0$ and $M_1$ in \eqref{eq:mainbound} are definite finite numbers for this construction. The estimate is explicit in these fixed geometric data and depends on this normalization. If certified bounds $\widehat M_i\ge M_i$ are available, one may use $\widehat M_0,\widehat M_1$ throughout \eqref{eq:parameters} and the subsequent estimates. The same construction then gives
\[
\sec(g_K)>2^{-53}(1+\widehat M_0+\widehat M_1)^{-28}.
\]
This explains how a numerical bound may be extracted without changing the proof.
\end{remark}

\subsection{The mixed model: the action-norm dependence}\label{sec:mixedmetric}
We now carry out the disk construction for $(\rho_*,\beta_*)$. This is the second case of \cref{thm:CD}. The principal connection estimate is unchanged; the northern horizontal-plane estimate must retain the norm of the representation. We give the choices and boundary comparison explicitly, following the compatible-disk method of \cite{DHZ}.

\begin{proposition}[Compatible metrics for the mixed model]\label{prop:mixedmetric}
The two disks marked by $J_{\beta_*}$ admit smooth metrics with positive sectional curvature, equal boundary metrics under $J_{\beta_*}$, and positive-definite sum of outward second fundamental forms.
\end{proposition}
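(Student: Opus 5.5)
We follow the Kervaire construction of Sections~\ref{sec:disks}--\ref{sec:gluing} word for word. The only changes are that the representation is $\rho_*$ on $V_*=\R^2\oplus\Hh\oplus\im\Hh$, that the transition is $\gamma_*=\beta_*^{-1}$, and that the infinitesimal map $\mathcal V_z$ now satisfies $\|\mathcal V_z\|_\op\le L_{\rho_*}\le2$ by \eqref{eq:mixedactionbound} instead of \eqref{eq:Kbound}.

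\textbf{Southern disk.} The bundle and star action of \cref{sec:bundle} apply verbatim, since equivariance of $\beta_*$ is all that is used. Freeness of the star action comes from the fiber factor, so the fixed points of $\rho_*$ at $h=0$ cause no trouble. On the round cap $\tau_0\le\tau\le\pi$ we take the potentials \eqref{eq:potentials} with $\gamma$ replaced by $\gamma_*$. This gives a star-invariant connection $\omega_*$, with fixed norms $M_{0,*}$ and $M_{1,*}$. We then choose $A_*=2(M_{0,*}^2+2M_{1,*}^2+2)$ and the radius $r_S^2=\varepsilon_*e^{-\varepsilon_*A_*\cos\tau}$. \Cref{cor:smallfiber} with $\kappa=1$ gives $\sec(G_S)>0$ for small $\varepsilon_*$. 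O'Neill's formula and the flat extending gauge at the pole then give a smooth positive quotient metric. Its boundary data are $\varrho_\partial$, $q_0$ and $\mu_S=\cot\tau_0<0$, as in \eqref{eq:southdata}.

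\textbf{Northern disk.} This is the step I expect to be the main obstacle, because here the action norm enters. We use the same implicit profile $F$ from \eqref{eq:Fimplicit}, now with a parameter $\delta_*$ and endpoint $\ell_*$. We also use $r_N$ from \eqref{eq:rN}, so that \eqref{eq:northdata} holds with the matched $r_a$ and $q_0$. The horizontal graph \eqref{eq:horizontalgraph} now satisfies \eqref{eq:graph1} and \eqref{eq:graph2} with an extra factor $L^2\le4$ on the right. Consequently the loss terms in \eqref{eq:PradPang} acquire a factor $4$:
\[
P_{\mathrm{rad}}=-\frac{F''}{F}-\frac{4F^2r_N''}{r_N^3},\qquad
P_{\mathrm{ang}}=\frac{1-F'^2}{F^2}-\frac{8FF'r_N'}{r_N^3}.
\]
To absorb this factor, we strengthen \eqref{eq:deltaconditions} by constants, for example by replacing $A_K$ with $4A_*$ in all three conditions. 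We then check, as in \cref{app:parameters}, that $q_0\ell_*<1/2$ and $d<1$ once $\varepsilon_*$ is small. The point is that $d/r_N^3\lesssim A_*F_0$ is independent of $\varepsilon_*$, since $d=r_a^2\varepsilon_*A_*F_0/2$ while $r_N\ge r_a/2$. The ordering of choices in the table is exactly what makes this work:
\begin{itemize}
\item $\delta_*$ depends only on $A_*$ and $F_0$;
\item $\ell_*$ depends only on $\delta_*$;
\item $\varepsilon_*$ is chosen last, by \eqref{eq:mixedepsilon}.
\end{itemize}
With these choices the identities \eqref{eq:Fidentities} again make each loss at most half of the corresponding gain. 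Injectivity of the projection of the graph to the base then gives positivity on every star-horizontal plane. At the center, the horizontal space is the base tangent space, so positivity holds there too. O'Neill's formula finishes the northern disk.

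\textbf{Boundary matching.} In the common north gauge, both source boundary metrics equal $F_0^2h_{S^8}+\varrho_\partial h_{S^3}$, and the star actions agree. By \eqref{eq:marking}, this yields $h_N=J_{\beta_*}^*h_S$. The normals $\partial_s$ and $-\partial_\tau$ are star-horizontal, so the fiber terms $\pm q_0|U|^2$ cancel exactly as in \eqref{eq:shapesum}. This leaves
\[
(\mu_N-\mu_S)|X|^2 .
\]
This quantity is positive definite on quotient boundary vectors, again because the horizontal graph is injective. The bound of the graph by $L$ only changes the constant in \eqref{eq:shapequant}. These are the three asserted properties.
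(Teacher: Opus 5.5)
Your proposal is correct and follows the paper's proof essentially step for step. It uses the same star-invariant southern connection with $A_*=2\Lambda_*$ and \cref{cor:smallfiber}, the same northern profiles with the factor $L^2=4$ absorbed into the two $\delta_*$ constraints before $\varepsilon_*$ is chosen, and the same cancellation of the fiber terms $\pm q_0|U|^2$ at the seam. One slip: $d=r_aq_0=r_a\varepsilon_*A_*F_0/2$, not $r_a^2\varepsilon_*A_*F_0/2$. The bound you actually need is $d/r_N^3\le 8q_0/r_a^2\le 4A_*F_0$, and this is indeed independent of $\varepsilon_*$.
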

\begin{proof}
Use the bundle and star action of \cref{sec:bundle} with $\rho,\beta,\gamma$ replaced by $\rho_*,\beta_*,\gamma_*:=\beta_*^{-1}$. The action is free on the source, and its quotient charts have transition $J_{\beta_*}$ by \eqref{eq:marking}. If $\mathcal V_{*,z}$ is its infinitesimal action on the unit sphere, then \eqref{eq:mixedactionbound} gives
\[
\|\mathcal V_{*,z}\|_{\op}\le L,\qquad L=2.
\]

\emph{Fixed connection and southern radius.}
Keep $\tau_0=2\pi/3$, $F_0=\sqrt3/2$, the round cap $h_B$, and the cutoff $\chi_S$ of \cref{sec:disks}. Define
\[
\mathsf A_{N,*}=\chi_S\gamma_*^{-1}d\gamma_*,\qquad
\mathsf A_{S,*}=(\chi_S-1)d\gamma_*\,\gamma_*^{-1}.
\]
These are potentials of one smooth star-invariant connection $\omega_*$. The extending south potential vanishes near the pole and the common north potential vanishes near the seam. Let $M_{0,*},M_{1,*}$ be its finite curvature and covariant-derivative norms with respect to $h_B$, using the normalization in \cref{sec:connection}, and set
\[
\Lambda_*=M_{0,*}^2+2M_{1,*}^2+2,\qquad A_*=2\Lambda_*.
\]
For a small parameter $\varepsilon_*>0$, put
\begin{equation}\label{eq:mixedsouth}
G_{S,*}=h_B+r_{S,*}^2Q(\omega_*,\omega_*),\qquad
r_{S,*}^2=\varepsilon_*e^{-\varepsilon_*A_*\cos\tau}.
\end{equation}
The function $\varphi_*=-A_*\cos\tau$ satisfies
$-\Hess\varphi_*\ge\Lambda_*h_B$ on the cap. By \cref{cor:smallfiber}, there is $\varepsilon_{S,*}>0$ such that $G_{S,*}$ has positive sectional curvature whenever $0<\varepsilon_*<\varepsilon_{S,*}$. Its free isometric quotient has the same positivity. At the pole the connection is flat in its extending gauge and $r_{S,*}$ is an even smooth function of the distance to the pole, so both source and quotient extend smoothly.

\emph{Northern parameters.}
Choose $\delta_*>0$, independently of $\varepsilon_*$, so that
\begin{equation}\label{eq:mixeddelta}
\delta_*\le F_0/2,\qquad
\delta_*^2\le\frac1{16L^2A_*F_0^2},\qquad
\delta_*^3\le\frac1{8L^2A_*F_0(1+M_\chi)}.
\end{equation}
Define $F_*(s)$ and $\ell_*$ by
\[
s=F_*+\frac{F_*^3}{3\delta_*^2},\qquad
\ell_*=F_0+\frac{F_0^3}{3\delta_*^2}.
\]
Here $F_*$ denotes this radial profile only in the present proof. The southern boundary radius and logarithmic slope are
\[
r_{a,*}^2=\varepsilon_*e^{\varepsilon_*A_*/2},\qquad
q_*=\frac{\varepsilon_*A_*F_0}{2},\qquad d_*=r_{a,*}q_*.
\]
After fixing $\delta_*$ and $\ell_*$, choose
\begin{equation}\label{eq:mixedepsilon}
0<\varepsilon_*<\min\left\{
\varepsilon_{S,*},\frac1{A_*},\frac1{A_*F_0\ell_*},
\left(\frac4{3A_*^2F_0^2}\right)^{1/3}\right\}.
\end{equation}
All entries are fixed positive numbers. These choices imply $q_*\ell_*<1/2$ and
$d_*^2<3\varepsilon_*^3A_*^2F_0^2/4<1$.
With the nondecreasing cutoff $\chi_N$ of \cref{sec:disks}, set
\begin{equation}\label{eq:mixednorth}
\begin{split}
r_{N,*}(s)&=r_{a,*}-d_*\int_s^{\ell_*}\chi_N(v/\delta_*)\,dv,\\
G_{N,*}&=ds^2+F_*^2h_{S^8}+r_{N,*}^2h_{S^3}.
\end{split}
\end{equation}
Then
\[
\frac{r_{a,*}}2<r_{N,*}\le r_{a,*},\qquad
0\le r_{N,*}'\le d_*<1,\qquad
\frac{d_*}{r_{N,*}^3}\le\frac{8q_*}{r_{a,*}^2}\le4A_*F_0.
\]

\emph{Northern curvature and smoothness.}
The quotient-horizontal graph is
$U=-(F_*/r_{N,*})\mathcal V_{*,z}^*X$.
The proof of \eqref{eq:graph1}--\eqref{eq:graph2}, now with operator norm bounded by $L$, gives
\begin{equation}\label{eq:mixedgraphs}
\begin{split}
|\lambda V-\mu U|^2&\le\frac{L^2F_*^2}{r_{N,*}^2}|\lambda Y-\mu X|^2,\\
|X\otimes V-Y\otimes U|^2&\le\frac{2L^2F_*^2}{r_{N,*}^2}|X\wedge Y|^2.
\end{split}
\end{equation}
Insert these bounds into \eqref{eq:warpedcurvature}. The two coefficients to control are
\[
P_{\mathrm{rad},*}=-\frac{F_*''}{F_*}-\frac{L^2F_*^2r_{N,*}''}{r_{N,*}^3},\qquad
P_{\mathrm{ang},*}=\frac{1-F_*'^2}{F_*^2}-\frac{2L^2F_*F_*'r_{N,*}'}{r_{N,*}^3}.
\]
The identities \eqref{eq:Fidentities} hold with $F,\delta$ replaced by $F_*,\delta_*$. In particular, for $s>0$,
\[
\frac{(1-F_*'^2)/F_*^2}{F_*F_*'}\ge\frac1{\delta_*^2F_0}.
\]
The ratio of the angular loss to the positive angular term is therefore at most
$8L^2A_*F_0^2\delta_*^2\le1/2$.
On the support of $r_{N,*}''$, one has $s\le\delta_*/2$ and $F_*\le s$, so
\[
-\frac{F_*''}{F_*}\ge\frac{128}{125\delta_*^2},\qquad
\frac{L^2F_*^2r_{N,*}''}{r_{N,*}^3}
\le L^2A_*F_0M_\chi\delta_*\le\frac1{8\delta_*^2}.
\]
Off this support the radial loss vanishes. Hence both coefficients are strictly positive; the remaining vertical coefficient is positive because $r_{N,*}'<1$. The projection of the horizontal graph to the disk tangent space is injective. Thus a nonzero horizontal two-plane has a nonzero radial or angular base area, and its curvature numerator is positive. O'Neill's formula gives positive sectional curvature on the punctured quotient disk.

Near its center, $F_*(s)=s-s^3/(3\delta_*^2)+O(s^5)$ is odd and smooth, and $r_{N,*}$ is constant. The source is therefore smooth. At the center the star orbit is the auxiliary fiber, and the quotient-horizontal planes have curvature $2/\delta_*^2$ in the disk factor. This proves positivity on the entire smooth quotient, including fixed points of the representation.

\emph{Boundary marking and second fundamental forms.}
At $s=\ell_*$, the northern values are
$F_*=F_0$, $r_{N,*}=r_{a,*}$, and $r_{N,*}'/r_{a,*}=q_*$.
In the common north gauge both source boundaries consequently have metric
$F_0^2h_{S^8}+r_{a,*}^2h_{S^3}$ and the same star action. Their quotient metrics agree under $J_{\beta_*}$. For the common horizontal lift $X+U$ of a boundary vector, the two outward forms sum to
\[
(\mu_{N,*}-\mu_S)|X|^2,\qquad
\mu_{N,*}=F_*'(\ell_*)/F_0>0,\quad\mu_S=-1/\sqrt3.
\]
The fiber terms $q_*|U|^2$ and $-q_*|U|^2$ cancel. The graph bound gives the quantitative inequality
\begin{equation}\label{eq:mixedshape}
\II_{N,*}+J_{\beta_*}^*\II_{S,*}
\ge\frac{r_{a,*}^2}{r_{a,*}^2+L^2F_0^2}
(\mu_{N,*}-\mu_S)h_{\partial,*}>0.
\end{equation}
For comparison with the Kervaire boundary table, the mixed data are
\begin{center}
\small\renewcommand{\arraystretch}{1.15}
\begin{tabular}{@{}lcc@{}}
\toprule
Boundary datum & Northern disk & Southern disk, common gauge\\
\midrule
Base and auxiliary radii & $(F_0,r_{a,*})$ & $(F_0,r_{a,*})$\\
Outward normal & $\partial_s$ & $-\partial_\tau$\\
Base logarithmic derivative along the normal & $\mu_{N,*}$ & $-\mu_S$\\
Fiber logarithmic derivative along the normal & $q_*$ & $-q_*$\\
\bottomrule
\end{tabular}
\end{center}
Here the boundary identification in the extending disk coordinates is $J_{\beta_*}$. This proves all the asserted boundary and curvature properties.
\end{proof}

\begin{corollary}\label{cor:mixedpositive}
The class $C_\eta$ admits a smooth Riemannian metric of positive sectional curvature.
\end{corollary}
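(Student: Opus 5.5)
The corollary follows by combining \cref{prop:mixedmetric} with \cref{lem:localgluing} and \cref{prop:mixedclass}, in the same way that \cref{thm:positive} was deduced for the Kervaire model. The plan is as follows.

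First, I record the output of \cref{prop:mixedmetric}. It gives two compact smooth disks $(D_N^9,g_{N,*})$ and $(D_S^9,g_{S,*})$ with $\sec>0$. Their boundary metrics are identified by the smooth isometry $J_{\beta_*}$, and the outward second fundamental forms satisfy \eqref{eq:mixedshape}, so their sum is positive definite. Since each disk is compact and its sectional curvature is a continuous positive function on the compact Grassmann bundle of two-planes, each metric has a positive minimum. I then fix a common strict threshold $\kappa$, with $0<\kappa$ and $\sec(g_{N,*})>\kappa$, $\sec(g_{S,*})>\kappa$; for instance, $\kappa$ can be half the smaller of the two minima. All disk parameters, including $\delta_*$, $\ell_*$ and $\varepsilon_*$ chosen via \eqref{eq:mixedepsilon}, are fixed before this step, following the order of choices in \cref{sec:bundle}.

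Second, I apply \cref{lem:localgluing} with $M_-=D_N^9$, $M_+=D_S^9$ and $\phi=J_{\beta_*}$. The hypothesis $\II_-+\phi^*\II_+\ge0$ holds, in fact strictly, by \eqref{eq:mixedshape}. The lemma then produces a smooth metric $g$ on $D^9\cup_{J_{\beta_*}}D^9$ with $\sec(g)>\kappa>0$, agreeing with the disk metrics away from a chosen collar of the seam. One point needs checking: the gluing in the lemma must be the marked gluing, meaning the smooth structure on the union is the one given by the attaching diffeomorphism $J_{\beta_*}$ in the extending disk coordinates. This is exactly what \eqref{eq:marking} guarantees for the quotient charts, so the resulting smooth manifold is the marked double.

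Third, I invoke \cref{prop:mixedclass}: the marked double $D^9\cup_{J_{\beta_*}}D^9$ represents $C_\eta=\eta E$ in $\Theta_9$. Positive sectional curvature is invariant under diffeomorphism and under orientation reversal (and in any case $C_\eta$ has order two). Hence every representative of $C_\eta$ carries a metric with $\sec>0$, i.e.\ $C_\eta\in\Pcal_9$.

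I expect no real obstacle, since the analytic content lies in \cref{prop:mixedmetric} and \cref{lem:localgluing}. The only step needing care is the compatibility of orientation and marking conventions: the two disks carry the opposite boundary orientations fixed in the Notation paragraph, and $J_{\beta_*}$ is orientation preserving by the same argument as in \cref{lem:explicitbeta}. This makes the glued oriented manifold exactly the one identified in \cref{prop:mixedclass}.
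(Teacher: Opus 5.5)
Your proposal is correct and follows the paper's argument: apply \cref{lem:localgluing} to the two compatible disks of \cref{prop:mixedmetric} with gluing map $J_{\beta_*}$, then use \cref{prop:mixedclass} to identify the glued manifold as $C_\eta$. The only difference is cosmetic: you take a positive threshold $\kappa$ (half the smaller curvature minimum), while the paper takes $\kappa=0$; both choices satisfy the lemma's hypotheses and both give $\sec>0$.
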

\begin{proof}
Apply \cref{lem:localgluing} with $\kappa=0$ to the two metrics of \cref{prop:mixedmetric}. The gluing map is $J_{\beta_*}$, so \cref{prop:mixedclass} identifies the smooth class as $C_\eta$. Together with the Kervaire construction, this also completes the proof of \cref{thm:CD}.
\end{proof}

\section{Positive metrics for marked right Hopf twists}\label{sec:bilateral}
It remains to construct a positive metric on the class $K+C_\eta$. By \cref{cor:rightclass}, this amounts to using the right Hopf attaching map $T_R$. The geometric construction depends on the smooth map $c$ and its compact support, but not on its homotopy class. We state it in this form.

\begin{theorem}[Marked right Hopf twist construction]\label{thm:lastpositive}
Let $c:\R\oplus\Hh\to S^3$ be a fixed smooth map such that $c-1$ has compact support in $\{|z|<5/8,\ |x|<5/8\}$. In the right Hopf chart \eqref{eq:rightHopfchart}, put
\[
T_c(q,z,x)=(qc(z,x),z,x),
\]
and extend $T_c$ by the identity to $S^8$. There is a smooth positively curved cylinder $[0,1]\times S^8$ whose left collar is an ordinary Hopf quotient metric and whose right collar is the $T_c$ pullback of such a metric. The cylinder admits two positively curved disk fillings with matching boundary metrics and nonnegative sums of outward second fundamental forms. After smoothing, these pieces give a metric of positive sectional curvature on $D^9\cup_{T_c}D^9$. For the representative $[c]=\eta_3\eta_4$ chosen in \cref{sec:framing}, this sphere represents $K+C_\eta$.
\end{theorem}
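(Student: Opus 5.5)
The plan is to build the metric on $D^9\cup_{T_c}D^9$ in three pieces: two caps and a cylinder whose two ends are exchanged by $T_c$. As in \cref{sec:bundle}, we work on source manifolds carrying a free isometric $S^3$ action and pass to quotients by O'Neill's formula.

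\emph{Source geometry.} Over $S^8\subset\R\oplus\Hh^2$ we use the right Hopf chart \eqref{eq:rightHopfchart}. Here $q$ is the fiber coordinate of the right Hopf fibration $S^7\to S^4$, suspended by $z$, and $T_c$ acts on it by right multiplication. The twist $q\mapsto qc(z,x)$ is then a gauge transformation, exactly as in the left-Hopf case of \cref{lem:calibrations}. The Hopf quotient metric, however, comes from the action by left multiplication on $(v_1,v_2)$. So the twist and the quotient action sit on opposite sides of the fiber.

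We therefore take the source $[0,1]\times S^8\times S^3$ with auxiliary fiber coordinate $u$ and star action $q\star(\cdot,v,u)=(\cdot,qv,qu)$. Its quotient at each time is $S^8$ with a Hopf-type metric. On the source we put a connection metric, with base a long thin cylinder over a round $S^8$ cap region and a connection potential $\mathsf A(t)$ interpolating as follows:
\begin{itemize}[nosep]
\item $\mathsf A=0$ at $t=0$;
\item at $t=1$, $\mathsf A$ is the pure gauge of the map $(z,x)\mapsto c(z,x)$ acting on the right.
\end{itemize}
The identity $T_c^*(\text{quotient at }0)=\text{quotient at }1$ then follows from the marking argument of \eqref{eq:marking}. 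The interpolation uses a cutoff $\lambda(t)$, giving $\mathsf A(t)=\lambda(t)c^{-1}dc$. This is a connection with curvature $d\lambda\wedge c^{-1}dc+\lambda(\lambda-1)(c^{-1}dc)^2$, supported on the fixed set $\mathcal U$.

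\emph{Curvature on the cylinder.} This is the main obstacle. Right gauge terms interacting with the left star action produce, in the mixed horizontal--vertical diagonal block of \eqref{eq:mixed1}, an order-one term that does not carry the small factor $r$. First I would apply \cref{prop:fine} with $r^2=a\,e^{a\varphi}$, $a=A_I\varepsilon_I^2$, with $\varphi$ concave in the cylinder direction and in the cap latitude, as in \cref{cor:smallfiber}. If this fails because of the offending linear term, I would add a closed four-form to the metric: a modification $h\mapsto h+\varepsilon_I\,\sigma$ built from $\langle\Omega,[\omega,\omega]\rangle$. Its effect is to cancel the mixed-diagonal contribution and move it to the cross block with $\nu$, where it is bounded by $C\,mv$. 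By Cauchy--Schwarz this is then dominated by the vertical term $r^{-2}v^2$, which is large, together with the concavity margin $\Lambda m^2$.

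All the constants $C$ are fixed from $c,\lambda,\mathcal U,h$ before the parameters are chosen, and they are uniform for $0\le a\le1/50$. The parameters are chosen in the order of the table in \cref{sec:bundle}. The result is that the matrix of \eqref{eq:Qlimit-general} type is positive definite, and this gives \cref{prop:cylinderpositive}.

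\emph{Caps and gluing.} At the left end we fill with a northern doubly warped disk, as in \eqref{eq:GN}, with profile parameter $B_N$. Its boundary metric, radius and slope are matched to the cylinder's flat-connection end. At the right end we fill with a copy of the same disk, pulled back by $T_c$; this is legitimate since $T_c$ is an isometry between the relevant quotients.

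We then compare second fundamental forms as in \eqref{eq:shapesum}. The fiber terms cancel and the base terms give a nonnegative sum. At the right seam the cylinder end is straight in the base direction, so the sum is only semidefinite. Finally, \cref{lem:localgluing} applied twice, with a common strict threshold, smooths both seams.

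The resulting manifold is $D^9\cup_{T_c}D^9$, because the left cap uses the identity marking and the right cap uses the $T_c$ marking. For $[c]=\eta_3\eta_4$, \cref{cor:rightclass} identifies its class as $K+C_\eta$.
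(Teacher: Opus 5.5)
\textbf{Gap 1: the cylinder and its collar identity.} You have the sides reversed. In the chart \eqref{eq:rightHopfchart}, $x=v_2v_1^{-1}$. So $(z,x)$ is invariant under right multiplication $v\mapsto vh^{-1}$, while left multiplication sends $(q,x)\mapsto(pq,pxp^{-1})$. With your star action $(v,u)\mapsto(pv,pu)$, the potential $\lambda c^{-1}dc$ is therefore, for a general $c$ as in the statement, neither invariant nor $\Ad$-equivariant. Your source metric does not descend to $[0,1]\times S^8$.

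Nor can the collar identity come from \eqref{eq:marking}. That mechanism only yields markings $J_\beta(y)=\rho(\beta(y))y$ with $\beta$ equivariant, and $T_c$ has this form for neither action. For the left action you would need $v_1cv_1^{-1}=v_2cv_2^{-1}$. For the right action $\beta=c^{-1}$ is invariant, not equivariant. Likewise $T_c$ is not a gauge transformation of the right Hopf bundle, since $T_c(vh)=vhc\neq vch=T_c(v)h$. This is why the third class needs a cylinder rather than compatible disks.

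The paper quotients by the right action $\rho_R$, whose orbits are the $q$-fibres. The twist is then on the same side as the quotient action, and it is the Hopf connection $\mathsf a$ that acts on the fibre from the other side. The paper writes the bilateral metric \eqref{eq:bilateralmetric} directly on $[0,1]\times S^8$ and reads off both collars from the explicit identity \eqref{eq:TRpullback}.

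A source picture does exist, but it is not yours. With the $\rho_R$ quotient, $T_c$ lifts to the equivariant map $(v,u)\mapsto(v,u\,c(z,x))$. One checks that the quotient of $dt^2+F^2g_{S^8}+r^2|u^{-1}du+\lambda\,dc\,c^{-1}|^2$ is \eqref{eq:bilateralmetric}, with $u^{-2}=r^{-2}+[F^2(1-z^2)]^{-1}$. Its connection acts by right multiplication on $u$, not by an equivariant left gauge as in \cref{sec:bundle}. With that source, your plan of estimating upstairs via \cref{cor:smallfiber} and then O'Neill can be made to work.

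Your four-form step is also misdescribed. Thorpe's form \eqref{eq:chosenfourform} is added to the curvature operator, not to the metric. The displaced order-one term $S$ lands in the $\Lambda^2H$--$\Lambda^2V$ block, so it is an $H_\xi V_\xi$ term absorbed by the vertical curvature of order $u^{-2}$, not an $mv$ term.

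\textbf{Gap 2: the caps.} A base that is ``straight'' at an end is incompatible with the estimates you invoke. On a product $[0,1]\times S^8$, planes containing $\partial_t$ have zero base curvature. Also no $\varphi$ satisfies $-\Hess\varphi\ge\Lambda g_B$: restrict to the totally geodesic slice $\{t\}\times S^8$ at a minimum of $\varphi(t,\cdot)$. So neither \cref{prop:fine} nor \cref{cor:smallfiber} applies.

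Once the base is warped, as in the paper's $h=dt^2+F^2h_5$ with $F=\cos(3/20+t)$, the two ends have different base radii. The right end also has outward base slope $-\tan(23/20)<0$. A copy of the left disk cannot match this: its base slope $\mu_N=F_N'(\ell)/F_N(\ell)$ is small for the $\delta$-profiles \eqref{eq:Fimplicit} and \eqref{eq:newF}, so it satisfies neither the radius condition nor the sign condition.

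The paper closes the right end with the round cap $F_S(\tau)=\cos(23/20+\tau)$ and constant fibre radius \eqref{eq:rightcap}. The base slopes then cancel exactly, and only the fibre term $\frac{a}{2(1+3a/4)}|U|^2\ge0$ from the concave radius survives \eqref{eq:rightshapesum}. That, not straightness, is why the right seam is only semidefinite. With these pieces in place, the two applications of \cref{lem:localgluing} at disjoint seams and the identification via \cref{cor:rightclass} go through as you describe.
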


The proof occupies this section and the next two. We write $T_R$ for the twist associated with the fixed map $c$ throughout the construction, and use its specified homotopy class only when identifying the final smooth type. All connection bounds may depend on $c$. We begin with the curvature calculation needed for the interpolation.

The interpolation naturally introduces two connection terms, acting on the round $S^3$ fiber by left and right multiplication. Both are isometries, and together they lie in $\SO(4)$. We call this a bilateral connection. Constant-radius formulas in this setting appear in \cite[Section 3.2]{BCG}. Here the radius must vary over the base in order to obtain a positive mixed-curvature term. We first compute its effect on the curvature tensor. We then use the four-form method of Thorpe, in the formulation of \cite{BM}, to put the estimate into a form suitable for the cylinder.

\subsection{Normal-gauge data}
Let $(B,h)$ be a Riemannian base, let $\sigma$ be the unit round metric on $S^3$, and let $u>0$ be a function on $B$. In a local trivialization write the horizontal lifts as $X_i=\partial_i-K_i$, where each $K_i$ is a fiber Killing field. The total metric is $h+u^2\sigma$ relative to this splitting. At a chosen base point use normal base coordinates and radial connection gauge, and at a chosen fiber point use normal fiber coordinates. Then, along the entire fiber over that base point,
\begin{equation}\label{eq:normalgauge}
K_i=0,\quad L_{ij}^a:=\partial_jK_i^a=-L_{ji}^a,\quad
M_{ijk}^a:=\partial_j\partial_kK_i^a,\quad S_{ijab}:=\partial_aL_{ij}^b.
\end{equation}
The curvature field satisfies $L_{ij}=-\Omega_{ij}^{\#}/2$. Since it is Killing,
\begin{equation}\label{eq:Sskew}
S_{ijab}=-S_{jiab}=-S_{ijba}.
\end{equation}
Write $u_i=\partial_i u$ and $u_{ij}=(\Hess_hu)_{ij}$. The orthonormal vertical frame at the point is $E_a=u^{-1}\partial_a$.

The coordinate metric and its two-jets are displayed in \cref{app:bilateral-derivation}. They give a direct computation of the following blocks, independent of a submersion curvature formula.

\subsection{The six curvature blocks}
We retain the curvature-operator convention of \cref{sec:connection} and explicitly set
\begin{equation}\label{eq:fourindex-convention}
R_{IJKL}:=\mathcal R(e_I\wedge e_J,e_K\wedge e_L)
=-g(R(e_I,e_J)e_K,e_L).
\end{equation}
Thus $R_{ijij}$ is positive on a unit positively curved coordinate two-plane. In particular, the $HHVV$ entry in this notation is the negative of $g(R(X_i,X_j)E_a,E_b)$ used in \eqref{eq:mixed2}.

\begin{proposition}\label{prop:bilateralblocks}
In the orthonormal frame $(X_i,E_a)$ the curvature components are
\begin{subequations}\label{eq:bilateralblocks}
\begin{align}
R_{ijkl}&=R^h_{ijkl}+u^2\sum_a
(-2L_{ij}^aL_{kl}^a-L_{ik}^aL_{jl}^a+L_{il}^aL_{jk}^a),\label{eq:biHHHH}\\
R_{ijka}&=u_jL_{ik}^a-u_iL_{jk}^a+2u_kL_{ij}^a
 +\frac u2(M_{ijk}^a-M_{jik}^a),\label{eq:biHHHV}\\
R_{ijab}&=2S_{ijab}+u^2\sum_k(L_{ik}^bL_{jk}^a-L_{ik}^aL_{jk}^b),\label{eq:biHHVV}\\
R_{iajb}&=-\frac{u_{ij}}u\delta_{ab}+S_{ijab}+u^2\sum_kL_{ik}^bL_{jk}^a,\label{eq:biHVHV}\\
R_{iabc}&=\sum_k u_k(L_{ik}^c\delta_{ab}-L_{ik}^b\delta_{ac}),\label{eq:biHVVV}\\
R_{abcd}&=\frac{1-|du|_h^2}{u^2}
(\delta_{ac}\delta_{bd}-\delta_{ad}\delta_{bc}).\label{eq:biVVVV}
\end{align}
\end{subequations}
Here the horizontal and vertical index ranges are independent; in the application they are six and three.
\end{proposition}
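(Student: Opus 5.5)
We prove \cref{prop:bilateralblocks} by a direct coordinate computation of the Levi-Civita connection of $g=h+u^2\sigma$ in the normal gauge \eqref{eq:normalgauge}. There are two steps: first compute the Christoffel symbols up to first order at the base point, then evaluate the curvature in the orthonormal frame $(X_i,E_a)$. Everything is evaluated along the fiber over the chosen base point, with normal fiber coordinates at the chosen fiber point.

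\emph{Step 1: the metric and its jets.} In coordinates $(x^i,y^a)$, the horizontal lifts $X_i=\partial_i-K_i^a\partial_a$ have dual coframe $dx^i$, $\vartheta^a=dy^a+K_i^a\,dx^i$. Hence
\[
g_{ij}=h_{ij}+u^2\sigma_{ab}K_i^aK_j^b,\qquad g_{ia}=u^2\sigma_{ab}K_i^b,\qquad g_{ab}=u^2\sigma_{ab}.
\]
At the point $K_i=0$, so $g$ is block diagonal. The first derivatives are $\partial_jg_{ia}=u^2L_{ij}^a$ and $\partial_ku^2$, and $\partial_cg_{ab}=0$ at the fiber point. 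The second derivatives involve $L$, $M$, $S$, $u_i$ and $u_{ij}$, together with the round curvature of $\sigma$. The quadratic terms $u^2LL$ in $\partial\partial g_{ij}$ come from the product $K_iK_j$.

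\emph{Step 2: the curvature blocks.} I would not use the coordinate curvature formula directly. Instead I would use the bracket-and-Koszul route of \cref{sec:connection}, which is cleaner in the frame $(X_i,E_a)$. The frame brackets are
\[
[X_i,X_j]=-(\partial_iK_j-\partial_jK_i)+\dots=2L_{ij}^a\partial_a+\dots,\qquad
[X_i,E_a]=-\tfrac{u_i}{u}E_a+u^{-1}[\partial_a,K_i],
\]
and $[E_a,E_b]$ is given by the round frame. One must keep the first-order vertical variation $S_{ijab}=\partial_aL_{ij}^b$, because the field $K_i$ is a general fiber Killing field. This field is not just the left-invariant one, so the term $S$ replaces the structure-constant terms $\tfrac12\langle[e_a,e_b],\Omega_{ij}\rangle$ of \eqref{eq:mixed1}--\eqref{eq:mixed2}.

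The Koszul formula then gives
\[
\nabla_{X_i}X_j=\nabla^h_iX_j+u L_{ij}^aE_a,\qquad
\nabla_{X_i}E_a=-uL_{ij}^aX_j+\dots,\qquad
\nabla_{E_a}E_b=\tfrac{1}{u}\Gamma^\sigma-\delta_{ab}\tfrac{u_i}{u}X_i,
\]
with the sign convention fixed by $L=-\Omega^\#/2$. The blocks follow by differentiating once more at the point:
\begin{itemize}
\item $HHHH$ is O'Neill's formula, written in the $L$ normalization. The factor $L=-\Omega/2$ converts $-\tfrac34|\Omega|^2$ into the displayed combination with coefficients $-2,-1,1$.
\item $HVVV$ and $VVVV$ are the Gauss and Codazzi equations for the warped round fiber, as in \cref{sec:connection}.
\item $HVHV$ uses the Hessian of $u$ and the vertical derivative $S$, exactly as in the check below \eqref{eq:mixed1}.
\item $HHVV$ follows from $HVHV$ by the first Bianchi identity, giving $R_{ijab}=R_{iajb}-R_{jaib}$. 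This produces the coefficient $2S$ via \eqref{eq:Sskew}.
\item $HHHV$ requires the second derivatives $M$ of $K$, which play the role of $D\Omega$, together with the $u_k$ terms coming from $-\nabla_{[X_i,X_j]}X_k$ and from differentiating $u$.
\end{itemize}

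The main obstacle is sign and normalization bookkeeping. There are three sources: the convention \eqref{eq:fourindex-convention}, which is the negative of \eqref{eq:mixed2}; the relation $L=-\Omega^\#/2$; and the fact that $S$ and $M$ are not identically the left-invariant quantities. I would control these in two ways. First, I would check consistency with \cref{prop:fine} in the special case of a principal connection, where $S_{ijab}=-\tfrac12\cdot\tfrac12\langle[e_a,e_b],\Omega_{ij}\rangle$ up to sign. Second, I would check the symmetries $R_{iajb}=R_{jbia}$ and the Bianchi identity on the displayed formulas. Once these checks pass, each block can be read off directly from the two-jets recorded in \cref{app:bilateral-derivation}.
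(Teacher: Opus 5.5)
Your outline is correct, but it takes a different route from the paper. The paper's proof (\cref{app:bilateral-derivation}) avoids frame brackets and submersion identities altogether. It substitutes the coordinate two-jets of $g$ (for instance $g_{ia,jb}=u^2S_{ijba}$ and $g_{ab,ij}=2(u_iu_j+uu_{ij})\delta_{ab}$) into the coordinate formula \eqref{eq:coordinateR}. It then separates second-derivative and Christoffel-product contributions block by block and divides by $u$ once per vertical index. In particular it computes $HHVV$ directly: the coefficient $2S$ appears because, after \eqref{eq:Sskew}, the four second-derivative terms (all of mixed metric components) enter with the same sign.

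You instead transplant the frame calculation of \cref{sec:connection}:
\begin{itemize}
\item the full (not merely sectional) O'Neill $A$-tensor formula for $HHHH$;
\item the $T$-tensor Gauss--Codazzi equations for $VVVV$ and $HVVV$;
\item one further differentiation of the Koszul coefficients for $HVHV$ and $HHHV$;
\item the first Bianchi identity $R_{ijab}=R_{iajb}-R_{jaib}$ for $HHVV$.
\end{itemize}
Carried out, this does reproduce the statement. For $HVHV$, the three terms $g(\nabla_{X_i}\nabla_{E_a}E_b,X_j)$, $g(\nabla_{E_a}\nabla_{X_i}E_b,X_j)$ and $g(\nabla_{[X_i,E_a]}E_b,X_j)$ evaluate at the point to $-(u_{ij}/u-u_iu_j/u^2)\delta_{ab}$, $-S_{ijab}-u^2\sum_kL_{ik}^bL_{jk}^a$ and $u_iu_j\delta_{ab}/u^2$. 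For $HHHV$, the term $-\nabla_{[X_i,X_j]}X_k$ yields $2u_kL_{ij}^a$, and differentiating the factor $u/2$ in $g(\nabla_{X_j}X_k,E_a)$ yields $u_jL_{ik}^a-u_iL_{jk}^a$.

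Your route is more conceptual: it reduces the genuinely new computation to two blocks and explains the factor $2S$ structurally. It has two costs.
\begin{itemize}
\item Each borrowed submersion identity must be re-translated into the convention \eqref{eq:fourindex-convention}.
\item Because $(X_i,E_a)$ is orthonormal only at the point, the Koszul coefficients are needed to first order nearby. For example, $g(\nabla_{E_a}E_b,X_j)=-u^{-1}u_j\sigma_{ab}$ holds off the point only because $K_j$ is $\sigma$-Killing; otherwise a term $\tfrac12(\mathcal L_{K_j}\sigma)_{ab}$ appears. So \eqref{eq:Sskew} is already needed in $HVHV$, not only in $HHVV$.
\end{itemize}
The paper's mechanical jet substitution sidesteps both issues, and it is exactly what the formal script in \cref{app:checks} verifies. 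Your closing sentence, about reading the blocks off the two-jets, describes that method rather than the one you propose.
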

\begin{proof}
Insert the coordinate metric two-jets into \eqref{eq:coordinateR} and normalize each vertical index by $u^{-1}$. The complete derivation, with the second-derivative and Christoffel contributions separated for $HHVV$ and $HVHV$, is in \cref{app:bilateral-derivation}.
\end{proof}

\subsection{Removing the linear term from the mixed block}
The main difficulty in \eqref{eq:biHVHV} is the term $S_{ijab}$. It remains of order one when the fiber radius becomes small, whereas the Hessian margin in our construction will be small. We deal with it by modifying the curvature operator with a four-form. This changes the operator on general bivectors while preserving its value on every tangent two-plane.

A four-form on the total space defines a symmetric operator on its tangent bivectors by evaluation on pairs of bivectors. Let $H$ and $V$ denote the horizontal and vertical tangent spaces. The elementary identity
$\omega(v\wedge w,v\wedge w)=0$ is the basis of Thorpe's modified-curvature-operator method \cite[Sections 1--2]{BM}. It implies that positive definiteness of $\mathcal R+\omega$ is sufficient for positive sectional curvature. We use it on
\[
\Lambda^2(H\oplus V)=\Lambda^2H\oplus(H\wedge V)\oplus\Lambda^2V.
\]
Define $\omega$ to have only two-horizontal, two-vertical components, with
\begin{equation}\label{eq:chosenfourform}
\omega(X_i,X_j,E_a,E_b)=S_{ijab}.
\end{equation}
Equation~\eqref{eq:Sskew} makes this an alternating four-form. Since
$\omega(X_i,E_a,X_j,E_b)=-S_{ijab}$, the mixed diagonal block becomes
\begin{equation}\label{eq:modifiedHV}
(\mathcal R+\omega)|_{H\wedge V}=-u^{-1}\Hess_hu\otimes I_V+u^2\mathcal B,
\end{equation}
where the coefficients of $\mathcal B$ are quadratic in $L$. The $\Lambda^2H$--$\Lambda^2V$ block is $3S+u^2\mathcal C$. The order-one term now occurs in an off-diagonal block coupling horizontal and vertical areas. The vertical curvature is of order $u^{-2}$, and will absorb this coupling when $u$ is small.

On a fixed compact base set the connection and its derivatives are bounded, and the round fiber is compact. Radial gauges are obtained by parallel transport; their jets are controlled by the fixed connection and base metric. Hence $L,S,M$ in \eqref{eq:normalgauge} have uniform bounds over all base and fiber points. These bounds do not depend on the fiber-radius parameters to be chosen below. We will therefore be able to choose one radius parameter uniformly on the compact part of the cylinder where the interpolation takes place.

\section{A positive cylinder for the right Hopf twist}\label{sec:cylinder}
We now construct the interpolation. The base metric will have curvature operator bounded below by one. The fiber radius is chosen to be small and slightly concave; this gives the positive term on mixed planes. The attaching map $T_R$ and its representative $c$ are fixed throughout.

\subsection{The metric and its global extension}
We first fix the normalization of the ordinary quotient in the right Hopf chart \eqref{eq:rightHopfchart}. Write
\[
\phi=dq\,q^{-1},\qquad
\mathsf a=\frac{\im(\overline x\,dx)}{1+|x|^2},\qquad
h_5=\frac{dz^2}{1-z^2}
+\frac{(1-z^2)|dx|^2}{(1+|x|^2)^2}.
\]
Direct differentiation of \eqref{eq:rightHopfchart}, followed by completion of the square in $dq\,q^{-1}$, gives
\begin{equation}\label{eq:roundHopfsplit}
g_{S^8}=h_5+(1-z^2)|\phi+\mathsf a|^2.
\end{equation}
For example, if $\alpha=\overline x\,dx/(1+|x|^2)$, the real part of $\alpha$ is removed by differentiating the normalization $(1+|x|^2)^{-1/2}$. The cross term in the remaining quaternionic differential is $2\langle dq\,q^{-1},\im\alpha\rangle$. The residual $x$ term after completing the square is $|dx|^2/(1+|x|^2)^2$, which proves the displayed normalization.

Let $F,r>0$ depend only on the cylinder parameter $t$, and put $d=F^2(1-z^2)$. For the free diagonal quotient with the right action used in \cref{sec:newcaps}, identify the two fiber Lie algebras by the orthogonal adjoint map at $q$. A horizontal lift of a fixed fiber displacement of size $w\in\im\Hh$ then minimizes
\[
d|v|^2+r^2|\xi|^2\quad\text{subject to }v+\xi=w.
\]
Its minimum is $dr^2|w|^2/(d+r^2)$. Base directions horizontal for $\phi+\mathsf a$ are unchanged. Thus the ordinary quotient is
\begin{equation}\label{eq:ordinaryquotientderived}
dt^2+F^2h_5+u^2|\phi+\mathsf a|^2,
\qquad u^2=\frac{dr^2}{d+r^2},\quad u^{-2}=r^{-2}+d^{-1}.
\end{equation}
The fiber metric here is the unit round metric, with the quaternionic normalization fixed in \cref{sec:connection}. This derivation fixes both the action convention and the coefficient used below.

Put
\[
b=dc\,c^{-1},\qquad
\lambda(t)=\mathfrak h\bigl((t-1/10)/(4/5)\bigr).
\]
For $0\le t\le1$ and positive parameters $a,\varepsilon_I$, define
\begin{equation}\label{eq:cylinderprofiles}
\begin{split}
F(t)&=\cos(3/20+t),\qquad f_a(t)=1+a(t-t^2/4),\qquad r=\varepsilon_I f_a,\\
u^{-2}&=r^{-2}+[F^2(1-z^2)]^{-1},\qquad h=dt^2+F^2h_5,
\end{split}
\end{equation}
and
\begin{equation}\label{eq:bilateralmetric}
g=h+u^2|\phi+\mathsf a+\lambda\Ad_q b|^2.
\end{equation}
Here $a>0$ measures the concavity of the radius profile. The horizontal equation is
$dq=-\mathsf a q-q\lambda b$.
Both terms are Killing fields of the round fiber, so \cref{prop:bilateralblocks} applies with structure group $\SO(4)$.

Under a fiber coordinate change $q_j=kq_i$, the ordinary potential satisfies
$\mathsf a_j=\Ad_k\mathsf a_i-dk\,k^{-1}$.
The fiber form in \eqref{eq:bilateralmetric} transforms by $\Ad_k$, and its norm is unchanged. The additional potential $b$ is supported in the fixed free chart. Extending it by zero therefore gives a global connection there. Away from this support, \eqref{eq:bilateralmetric} is the ordinary right Hopf Cheeger quotient of
\begin{equation}\label{eq:ordinarysource}
dt^2+F^2g_{S^8}+r^2g_{S^3}.
\end{equation}
That free quotient defines the smooth metric at the poles $z=\pm1$ and in the other Hopf chart. It agrees with \eqref{eq:bilateralmetric} on the overlaps. Thus the quotient construction supplies the smooth extension across the singular Hopf coordinates, as in \cref{sec:gluing} and \cite{GG}.

Finally,
\begin{equation}\label{eq:TRpullback}
T_R^*(dq\,q^{-1}+\mathsf a)=\phi+\mathsf a+\Ad_qb.
\end{equation}
Since $\lambda$ is constant near each endpoint, the cylinder has the ordinary metric on its left collar and its $T_R$ pullback on its right collar. This identity holds for the whole collar metric, and hence for the boundary metric and its normal derivative.

\subsection{The compact-region estimate}
\begin{proposition}\label{prop:cylinderpositive}
For the fixed maps $c$ and $\lambda$, there are constants $A_{I,0}<\infty$ and $\varepsilon_{I,0}(A_I)>0$ such that \eqref{eq:bilateralmetric} has positive sectional curvature whenever
\[
A_I\ge A_{I,0},\qquad a=A_I\varepsilon_I^2,\qquad0<\varepsilon_I\le\varepsilon_{I,0}(A_I).
\]
The estimate holds on the entire smooth cylinder $[0,1]\times S^8$.
\end{proposition}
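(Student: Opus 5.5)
The plan is to prove positivity by splitting the cylinder into two regions and treating each with a different tool. On the complement of $\supp b\cup\supp\lambda'$, and near the poles $z=\pm1$, the metric \eqref{eq:bilateralmetric} is the ordinary Cheeger quotient of the source \eqref{eq:ordinarysource}. On the remaining compact set $\mathcal U$ (the fixed free chart meeting the support of $\lambda\Ad_q b$), I use the blocks of \cref{prop:bilateralblocks} together with the four-form \eqref{eq:chosenfourform}.

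\emph{Ordinary region.} Since $F(t)=\cos(3/20+t)$, the factor $dt^2+F^2g_{S^8}$ is a band of the unit round $S^{9}$. The radius $r=\varepsilon_If_a$ depends only on $t$, with $-r''/r=a/(2f_a)>0$ and $|r'|\le a\varepsilon_I$. The source is a warped product $S^9$-band $\times_r S^3$, and the five-block doubly warped formula (as in \eqref{eq:warpedcurvature}) applies. It shows nonnegative curvature, strictly positive on every plane except flat planes spanned by a non-$t$ base direction and a fiber direction. Those mixed planes pick up the positive term $-r''/r$ only through their $dt$ component. I therefore argue on quotient-horizontal planes, as in \cref{sec:disks}: the horizontal graph of the diagonal $S^3$ action couples each fiber direction to an $S^8$ Hopf direction with a coefficient that is not small. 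O'Neill's $A$-tensor term, or the $1/F^2$ curvature of the base sphere, then makes every horizontal plane strictly positive. The lower bound is uniform in $\varepsilon_I$ after rescaling by the fiber size. As a cross-check, the same conclusion also follows from the bilateral computation below with $b=0$.

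\emph{Bilateral region.} On $\mathcal U$ I bound $\mathcal R+\omega$ as a quadratic form in $(\xi^{\mathrm h},\zeta,\nu)\in\Lambda^2H\oplus(H\wedge V)\oplus\Lambda^2V$. The constant $C$ bounds $L,S,M$ on $\mathcal U$, uniformly for $0\le a\le1/50$; it depends only on $c,\lambda$ and the ordinary potential. The expansion $u^{-2}=r^{-2}+d^{-1}$ gives $u=r(1-r^2/(2d)+O(r^4))$. Hence
\[
-u^{-1}\Hess_hu\ge\tfrac{a}{2f_a}\,dt^2-C\varepsilon_I^2\,h,
\qquad |du|\le C(a\varepsilon_I+\varepsilon_I^3).
\]
I control the blocks one at a time.
\begin{itemize}
\item $\Lambda^2H$: by \eqref{eq:biHHHH}, this block is at least $(1-Cu^2)$, since the base $h=dt^2+F^2h_5$ has curvature operator at least one.
\item $H\wedge V$: by \eqref{eq:modifiedHV}, this block is at least
\[
\Bigl(\tfrac{A_I}{2f_a}-C\Bigr)\varepsilon_I^2\;\text{in the $dt$ directions},
\]
and the same estimate must hold in all horizontal directions. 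This requires care: $u$ depends on $z$ only through the $\varepsilon_I^2$ correction, so concavity in the $h_5$ directions must come from that $d^{-1}$ term. I would check its sign from $\Hess(d^{-1})$, and failing that add a small concave $z$-dependence to $f_a$.
\item $\Lambda^2H$--$H\wedge V$: the coupling \eqref{eq:biHHHV} is $O(\varepsilon_I)$.
\item $\Lambda^2H$--$\Lambda^2V$: the coupling $3S+O(u^2)$ is $O(1)$.
\item $H\wedge V$--$\Lambda^2V$: the coupling \eqref{eq:biHVVV} is $O(\varepsilon_I^2)$.
\item $\Lambda^2V$: by \eqref{eq:biVVVV}, this block is at least $u^{-2}(1-C\varepsilon_I^2)\ge c\,\varepsilon_I^{-2}$.
\end{itemize}

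Rescaling $m\mapsto\varepsilon_I m$ and $v\mapsto v/\varepsilon_I$, exactly as in \cref{cor:smallfiber}, turns these bounds into a $3\times3$ matrix. Its entries are $1$, $A_I/2-C$ and $c$ on the diagonal, $O(1)$ in the $hm$ slot, $O(\varepsilon_I)$ in the $hv$ slot, and $O(\varepsilon_I)$ in the $mv$ slot. Choosing $A_{I,0}$ with $A_{I,0}/2-C>2C^2+1$ makes the limit matrix positive definite. Then $\varepsilon_{I,0}(A_I)$ is chosen so that $a=A_I\varepsilon_I^2\le1/50$ and the perturbation is absorbed. Positivity of $\mathcal R+\omega$ then gives $\sec>0$ by Thorpe's identity $\omega(\xi,\xi)=0$ on decomposable $\xi$.

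The main obstacle is the mixed block in every direction. The concavity margin is only of order $A_I\varepsilon_I^2$. It competes with the $u^2\mathcal B$ term, of order $\varepsilon_I^2$, and with the squared $HHHV$ coupling, also of order $\varepsilon_I^2$; both are beaten only by taking $A_I$ large. The margin must also be present in the non-$t$ horizontal directions, where $\Hess u$ comes solely from the $d$-correction. I would verify this first in explicit coordinates, and if needed replace $r(t)$ by a radius with a small uniformly concave base dependence, keeping the endpoint collars unchanged.
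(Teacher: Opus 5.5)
Your overall architecture is the paper's:
\begin{itemize}
\item the four-form \eqref{eq:chosenfourform};
\item a three-block quadratic form in $(H_\xi,M_\xi,V_\xi)$ with the same rescaling;
\item a constant $C$ fixed from the connection jets before $A_I$, and $\varepsilon_I$ chosen last;
\item O'Neill's formula on the ordinary quotient away from $\supp(dc)$.
\end{itemize}
The gap is at the step you yourself flag. You never establish a positive mixed diagonal in the five $h_5$ directions, and you believe it is missing only because of a miscomputation. The Hessian in \eqref{eq:modifiedHV} is taken with respect to the warped base metric $h=dt^2+F^2h_5$, not a product metric. For a function of $t$ alone,
\[
\Hess_hr=r''\,dt^2+r'FF'\,h_5 .
\]
Here $r'=\varepsilon_Ia(1-t/2)>0$ and $F'=-\sin(3/20+t)<0$. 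So $-r^{-1}\Hess_hr$ has eigenvalue $a/(2f_a)$ on $\partial_t$ and $a(1-t/2)\tan(3/20+t)/f_a$ on every $h_5$ direction. Both exceed $a/20$ for $0<a\le1/50$ and $t\in[0,1]$. Write $u=\varepsilon_Iv$, with $v$ smooth in $\varepsilon_I^2$ on the compact region. This gives $-u^{-1}\Hess_hu\ge(a/20-C_0\varepsilon_I^2)h$ in all horizontal directions, for a constant $C_0$ independent of $a$ and $\varepsilon_I$; this is exactly the margin the paper uses. Your bound $\frac a{2f_a}dt^2-C\varepsilon_I^2h$ discards this angular term. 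With it restored, your matrix argument goes through with $A_I/20$ in place of $A_I/2$, which only changes $A_{I,0}$.

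Neither of your proposed fallbacks would close the gap.
\begin{itemize}
\item The $d^{-1}$ correction enters $-u^{-1}\Hess_hu$ only at order $\varepsilon_I^2$, with no factor $A_I$. It therefore cannot dominate the $u^2\mathcal B$ term or the squared $HHHV$ coupling, which are of the same order. Moreover, for unit $Y$ tangent to the $S^4$ factor of $h_5$ one computes $\Hess_h\bigl(F^{-2}(1-z^2)^{-1}\bigr)(Y,Y)\le0$, so its leading contribution there has the wrong sign.
\item Making $r$ depend on the base point changes the metric \eqref{eq:bilateralmetric}, which is the object of the proposition.
\end{itemize}
The same missed sign appears in your treatment of the ordinary region. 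Planes spanned by an $S^8$ direction and a fiber direction are not flat in \eqref{eq:ordinarysource}: their curvature is $-F'r'/(Fr)=a(1-t/2)\tan(3/20+t)/f_a>0$. All five doubly warped coefficients are therefore positive once $r'<1$, and O'Neill's formula applies directly, without your horizontal-graph argument.
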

\begin{proof}
Choose a compact set $\mathcal U$ in the free base chart containing $\supp(dc)$ in its interior. Write $z=\sin\sigma$. Then
\[
h_5=d\sigma^2+\cos^2\sigma\,g_{S^4(1/2)}.
\]
Its curvature operator has eigenvalue $1$ on radial planes and $1+3/\cos^2\sigma$ on angular planes, with no cross blocks. Warping by $F$ in \eqref{eq:cylinderprofiles} gives $\mathcal R_h\ge I$. Also $-r^{-1}\Hess_hr$ has eigenvalues
\[
\frac{a}{2f_a},\qquad
\frac{a(1-t/2)\tan(3/20+t)}{f_a}\quad\hbox{with multiplicity five}.
\]
For $0<a\le1/50$ they are greater than $a/20$.

On $[0,1]\times\mathcal U$, the function $d=F^2(1-z^2)$ has a positive lower bound and
\[
v:=u/\varepsilon_I=f_a(1+\varepsilon_I^2f_a^2/d)^{-1/2}.
\]
This is a smooth positive function of $\tau=\varepsilon_I^2\in[0,1]$, $a\in[0,1/50]$, and the base point. Its derivatives on this compact set give a constant $C_0$ independent of $a,\varepsilon_I$ such that
\begin{equation}\label{eq:radiusbounds}
u\le2\varepsilon_I,\qquad |d\log u|\le C_0,\qquad
-u^{-1}\Hess_hu\ge(a/20-C_0\varepsilon_I^2)h.
\end{equation}
For the last inequality, the $\tau$ derivative of $-v^{-1}\Hess_hv$ is uniformly bounded, and its value at $\tau=0$ is $-f_a^{-1}\Hess_hf_a$. The normalization by $\varepsilon_I$ makes $v$ a smooth positive function even at $\tau=0$.

We make the constant dependence explicit before choosing $A_I$. On the fixed compact region $[0,1]\times\mathcal U$, let $K_L,K_S,K_M$ be uniform Euclidean array bounds for the normal-gauge tensors $L,S,M$ in \eqref{eq:normalgauge}, using orthonormal base and unit-fiber frames. Such bounds are fixed by $h$, the connection determined by $c,\lambda$, and a finite collection of bundle charts. They are independent of $a$ and $\varepsilon_I$, since neither the horizontal connection nor the base metric depends on these radius parameters.

Take $C_0$ as in \eqref{eq:radiusbounds}, uniformly for $0\le a\le1/50$ and $0\le\varepsilon_I^2\le1$. In the fixed dimensions six and three, each contraction in \eqref{eq:bilateralblocks} is bounded in operator norm by a numerical constant times the corresponding array norms. Hence a sufficiently large numerical constant $c_{6,3}$ makes
\begin{equation}\label{eq:cylinderconstant}
C=c_{6,3}(1+C_0)^2(1+K_L+K_S+K_M)^2\ge1
\end{equation}
a common bound for the coefficients below. Indeed, the horizontal and mixed quadratic errors are bounded by a multiple of $u^2K_L^2$; the three cross blocks are bounded by multiples of $u(K_M+C_0K_L)$, $K_S+u^2K_L^2$, and $uC_0K_L$, respectively. The vertical loss is $|d\log u|^2\le C_0^2$. This also explains why $C$ is fixed before $A_I$ and remains uniform when $a=A_I\varepsilon_I^2$ is imposed.

For a bivector $\xi$, denote the norms of its three components by $H_\xi,M_\xi,V_\xi$. Equations~\eqref{eq:bilateralblocks}--\eqref{eq:radiusbounds} now give
\begin{equation}\label{eq:threeblockbound}
\begin{split}
\langle(\mathcal R+\omega)\xi,\xi\rangle\ge{}&
(1-C\varepsilon_I^2)H_\xi^2+(a/20-C\varepsilon_I^2)M_\xi^2
+(1/(4\varepsilon_I^2)-C)V_\xi^2\\
&-2C\varepsilon_I H_\xi M_\xi-2CH_\xi V_\xi-2C\varepsilon_I M_\xi V_\xi.
\end{split}
\end{equation}
We now choose $a$ so that the mixed diagonal term dominates the losses from the cross terms. The first two cross terms are bounded respectively by
$H_\xi^2/4+4C^2\varepsilon_I^2M_\xi^2$ and $H_\xi^2/4+4C^2V_\xi^2$; the third is at most $\varepsilon_I^2M_\xi^2+C^2V_\xi^2$. Choose
\begin{equation}\label{eq:cylinderparameters}
\begin{gathered}
A_I\ge40(C+4C^2+1),\qquad a=A_I\varepsilon_I^2,\\
\varepsilon_I^2\le\min\left\{\frac1{50A_I},\frac1{4C},\frac1{8(C+5C^2)}\right\}.
\end{gathered}
\end{equation}
Then
\begin{equation}\label{eq:strictthreeblock}
\langle(\mathcal R+\omega)\xi,\xi\rangle
\ge\frac14H_\xi^2+\frac a{40}M_\xi^2+\frac1{8\varepsilon_I^2}V_\xi^2>0
\quad(\xi\ne0).
\end{equation}
The four-form vanishes on decomposable bivectors, giving strict sectional positivity throughout this compact region.

Outside $\mathcal U$, the source \eqref{eq:ordinarysource} has the five curvature-operator coefficients
\[
1,\quad1,\quad\frac{a}{2f_a},\quad\frac{1-r'^2}{r^2},\quad-\frac{F'r'}{Fr}.
\]
They are all positive after decreasing $\varepsilon_I$ so that $r'<1$. O'Neill's formula \cite{ONeill} gives positive curvature of the ordinary quotient, including neighborhoods of the collapsed Hopf orbits. Together with \eqref{eq:strictthreeblock}, this covers the full cylinder.
\end{proof}

\section{Disk fillings and the proof of the main theorem}\label{sec:newcaps}
The cylinder has positive curvature and the required boundary marking. We must now fill its two ends by disks. The right end can be filled by a quotient of a round cap with constant auxiliary radius. The left end requires a different profile: its boundary fiber slope must be slightly larger than that of the cylinder, so that the outward second fundamental forms have positive sum.

We use the horizontal-plane method of \cite[Section 3]{HLY} and \cite[Section 5.3]{DHZ}. The profile $F_N'=(1+F_N^2/\delta^2)^{-3/2}$ will give enough angular curvature throughout the left disk while allowing the prescribed boundary slope. We verify the curvature, smoothness at the center, and boundary comparisons in turn.

\subsection{A horizontal-plane criterion}
On $D^9\times S^3$ consider
\begin{equation}\label{eq:newcapsource}
G=ds^2+F_N(s)^2g_{S^8}+r_N(s)^2g_{S^3}.
\end{equation}
Let $\rho_R(h)(z,v_1,v_2)=(z,v_1h^{-1},v_2h^{-1})$ and let
$h\cdot(p,v)=(\rho_R(h)p,hv)$.
This is a free isometric action, with quotient coordinate $[p,v]\mapsto\rho_R(v)^{-1}p$. Thus the quotient is the standard smooth disk, with its usual boundary marking. A unit infinitesimal generator on the unit $S^8$ has norm at most one; we use the harmless bound $L=2$.

\begin{lemma}\label{lem:capcriterion}
Away from the center, the quotient of \eqref{eq:newcapsource} has positive sectional curvature if
\begin{equation}\label{eq:capcriterion}
\begin{gathered}
\frac{1-F_N'^2}{F_N^2}>
\frac{2L^2F_NF_N'r_N'}{r_N^3},\qquad
-\frac{F_N''}{F_N}>\frac{L^2F_N^2(r_N'')_+}{r_N^3},\\
0\le r_N'<1,\qquad F_N'\ge0.
\end{gathered}
\end{equation}
\end{lemma}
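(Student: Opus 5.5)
The plan is to repeat the northern horizontal-plane argument of \cref{sec:disks} and the mixed case of \cref{prop:mixedmetric}. The only change is that we assume the stated inequalities rather than derive them from explicit profiles. By O'Neill's formula, the free isometric action $h\cdot(p,v)$ gives quotient sectional curvature at least the source curvature on horizontal planes. So it suffices to show that the doubly warped source \eqref{eq:newcapsource} has positive sectional numerator on every two-plane horizontal for the action. We work at a point with $s>0$.

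\emph{Step 1: horizontal graph.} At the fiber point $v=1$, the unit generator $\xi\in\im\Hh$ has normalized angular component $F_N\mathcal V_p\xi$ and vertical component $r_N\xi$, where now $\mathcal V_p\xi=(0,-v_1\xi,-v_2\xi)$ and $\|\mathcal V_p\|_\op\le1\le L$. Orthogonality to these generators forces a horizontal vector to have the form $\lambda\partial_s+X+U$ with
\[
U=-\frac{F_N}{r_N}\mathcal V_p^*X.
\]
Left translation in the fiber is an isometry commuting with the construction, so this description holds at every fiber point. For a pair of horizontal vectors, the argument for \eqref{eq:graph1}--\eqref{eq:graph2} gives \eqref{eq:mixedgraphs} with $F_*,r_{N,*}$ replaced by $F_N,r_N$. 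For \eqref{eq:graph2}, one first makes $X,Y$ orthogonal by an area-preserving change of the pair.

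\emph{Step 2: the five-block formula.} Insert these bounds into the warped curvature numerator \eqref{eq:warpedcurvature} with $F=F_N$, $r_N=r_N$.
\begin{itemize}
\item The radial-vertical term $-(r_N''/r_N)|\lambda V-\mu U|^2$ is at least $-(r_N'')_+L^2F_N^2r_N^{-3}|\lambda Y-\mu X|^2$. Added to $-(F_N''/F_N)|\lambda Y-\mu X|^2$, this gives a positive multiple of the radial base area, by the second inequality in \eqref{eq:capcriterion}.
\item The angular-vertical cross term has coefficient $-F_N'r_N'/(F_Nr_N)\le0$, since $F_N'\ge0$ and $r_N'\ge0$. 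Its magnitude is at most $2L^2F_NF_N'r_N'r_N^{-3}|X\wedge Y|^2$, which the first inequality absorbs into the angular term $(1-F_N'^2)F_N^{-2}|X\wedge Y|^2$.
\item The vertical term has coefficient $(1-r_N'^2)/r_N^2>0$, because $0\le r_N'<1$.
\end{itemize}
Hence the numerator is at least
\[
P_{\mathrm{rad}}|\lambda Y-\mu X|^2+P_{\mathrm{ang}}|X\wedge Y|^2
\]
with $P_{\mathrm{rad}},P_{\mathrm{ang}}>0$, plus a nonnegative vertical part.

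\emph{Step 3: strictness.} Projection of the horizontal graph to the base $\R\partial_s\oplus T S^8$ is injective. An independent horizontal pair therefore has independent projections $\lambda\partial_s+X$ and $\mu\partial_s+Y$. Their base bivector is $\partial_s\wedge(\lambda Y-\mu X)+X\wedge Y$, which is nonzero, so at least one of the two base areas is positive. The numerator is then strictly positive, and O'Neill's formula finishes the proof away from the center.

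The main obstacle is the sign bookkeeping. First, the positive part $(r_N'')_-$ may be dropped only because it contributes with a favorable sign. Second, a sign-indefinite cross term could appear in the full five-block formula. I would check, as in \cite[Section 5.3]{DHZ}, that \eqref{eq:warpedcurvature} has no further cross blocks for a doubly warped product over an interval. With that confirmed, the estimates above are routine.
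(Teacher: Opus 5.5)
Your proposal is correct and follows the paper's own proof essentially step for step: the graph description $U=TX$ with $\|T\|\le LF_N/r_N$, the bounds \eqref{eq:graph1}--\eqref{eq:graph2} inserted into the diagonal five-block formula \eqref{eq:warpedcurvature}, injectivity of the projection to the disk to force a nonzero radial or angular base area, and then O'Neill's formula. One small slip: left translation $v\mapsto kv$ on the fiber does not preserve the orbits of $h\cdot(p,v)=(\rho_R(h)p,hv)$, so to transport the graph description along the fiber you should use right translation $v\mapsto vk$, which commutes with the action, or the isometric action itself.
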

\begin{proof}
The horizontal space is a graph over the disk tangent space. For scaled angular and auxiliary-fiber components write $U=TX$, so $\|T\|\le LF_N/r_N$. For two horizontal vectors with radial coefficients $\lambda,\mu$, the graph inequalities \eqref{eq:graph1}--\eqref{eq:graph2} give
\[
|\lambda V-\mu U|^2\le\|T\|^2|\lambda Y-\mu X|^2,
\qquad |X\otimes V-Y\otimes U|^2\le2\|T\|^2|X\wedge Y|^2.
\]
Insert these into the doubly warped formula \eqref{eq:warpedcurvature}. The radial and angular coefficients are positive under \eqref{eq:capcriterion}, and the pure-fiber coefficient is nonnegative. The projection of a nonzero horizontal two-plane to the disk is two-dimensional, since the horizontal space is a graph. Hence its radial or angular base area is nonzero, making the source curvature numerator strictly positive. O'Neill's formula then gives the result. 
\end{proof}

\subsection{The left disk}
Fix $A_I$ as in \eqref{eq:cylinderparameters} and set
\[
F_* =\cos(3/20),\qquad B_N=11A_I/10.
\]
Use the nondecreasing cutoff $\chi_N(v)=\mathfrak h(4v-1)$ of \cref{sec:disks}, and write $M_\chi=\sup\chi_N'$. Choose
\begin{equation}\label{eq:newdelta}
0<\delta\le F_*/2,\qquad
\delta^3\le[32L^2B_N(1+M_\chi)]^{-1}.
\end{equation}
The first inequality in \eqref{eq:capcriterion} suggests choosing $F_N$ so that $(1-F_N'^2)/(F_N^3F_N')$ has a uniform positive lower bound. We use the following profile, for which this bound is $\delta^{-3}$, and define the cap length accordingly:
\begin{equation}\label{eq:newF}
F_N'=(1+F_N^2/\delta^2)^{-3/2},\quad F_N(0)=0,
\qquad \ell=\int_0^{F_*}(1+y^2/\delta^2)^{3/2}\,dy.
\end{equation}
Put $q_N=B_N\varepsilon_I^2$, $d_N=\varepsilon_I q_N$, and
\begin{equation}\label{eq:newr}
r_N(s)=\varepsilon_I-d_N\int_s^\ell\chi_N(v/\delta)\,dv.
\end{equation}
Further restrict $\varepsilon_I$ by $q_N\ell\le1/2$ and $d_N<1$. Then
\begin{equation}\label{eq:newrend}
\varepsilon_I/2\le r_N\le\varepsilon_I,\qquad
F_N(\ell)=F_*,\quad r_N(\ell)=\varepsilon_I,\quad
r_N'(\ell)/r_N(\ell)=q_N.
\end{equation}
In particular the boundary metric matches the ordinary left boundary of the cylinder.

\begin{proposition}\label{prop:leftcap}
The profiles \eqref{eq:newF}--\eqref{eq:newr} give a smooth positive-sectional-curvature metric on the whole quotient disk.
\end{proposition}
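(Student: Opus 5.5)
The plan is to verify the four conditions of \cref{lem:capcriterion} on $0<s\le\ell$, and then to treat the center separately. First I would record the elementary properties of the profiles. From \eqref{eq:newF}, $F_N$ is increasing with $0<F_N'\le1$ and $F_N\le s$. Differentiating gives
\[
F_N''=-\tfrac{3F_NF_N'^2}{\delta^2}\bigl(1+F_N^2/\delta^2\bigr)^{-1}
=-\tfrac{3F_N}{\delta^2}\bigl(1+F_N^2/\delta^2\bigr)^{-4}.
\]
Hence $-F_N''/F_N=3\delta^{-2}(1+x^2)^{-4}$ with $x=F_N/\delta$. From \eqref{eq:newr} one has $r_N'=d_N\chi_N(s/\delta)\ge0$ and $r_N''=(d_N/\delta)\chi_N'(s/\delta)\ge0$. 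These are supported in $s\ge\delta/4$ and $s\le\delta/2$ respectively, with $r_N'$ constant $d_N$ for $s\ge\delta/2$. The restriction $d_N<1$ gives $r_N'<1$. Together with \eqref{eq:newrend} it gives $d_N/r_N^3\le8d_N/\varepsilon_I^3=8B_N$. The point to keep in view is that this last ratio is independent of $\varepsilon_I$, because $d_N=B_N\varepsilon_I^3$.

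For the angular inequality I would use the defining identity of the profile. Writing $y=1+x^2$, one has $F_N'=y^{-3/2}$ and $1-F_N'^2=1-y^{-3}\ge(1-y^{-1})y^{-2}\cdot y\ge x^2y^{-3}$. From this, $(1-F_N'^2)/(F_N^3F_N')\ge x^2y^{-3/2}/F_N^3=\delta^{-2}y^{-3/2}/F_N$. That crude bound is not uniform, so I would redo it carefully. It suffices that $1-y^{-3}\ge x^2y^{-3/2}\cdot(\ldots)$, and the clean statement claimed before \eqref{eq:newF} is $(1-F_N'^2)/(F_N^3F_N')\ge\delta^{-3}$ up to harmless constants on $F_N\le F_*$; this needs $F_*\le\delta$-independent control, and I would check it using $F_N\le F_*$ and $\delta\le F_*/2$. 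Granting a bound of the form $(1-F_N'^2)/F_N^2\ge c\,\delta^{-3}F_NF_N'$, the angular loss is
\[
\frac{2L^2F_NF_N'r_N'}{r_N^3}\le16L^2B_NF_NF_N'.
\]
By \eqref{eq:newdelta}, $16L^2B_N\le\tfrac12\delta^{-3}$, so the loss is at most half the positive term.

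For the radial inequality, on $\supp r_N''$ we have $F_N\le s\le\delta/2$, hence $x\le1/2$ and $-F_N''/F_N\ge3\delta^{-2}(5/4)^{-4}>\delta^{-2}$. The loss satisfies
\[
L^2F_N^2r_N''/r_N^3\le L^2(\delta^2/4)(8B_NM_\chi/\delta)=2L^2B_NM_\chi\delta.
\]
By \eqref{eq:newdelta} this is at most $\delta^{-2}/16$. Off that support the loss vanishes, while $-F_N''/F_N>0$ for $s>0$. Then \cref{lem:capcriterion} gives positivity away from the center.

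At the center, $r_N$ is constant for $s\le\delta/4$. The function $F_N$ is odd and smooth with $F_N=s-s^3/(2\delta^2)+O(s^5)$, since the ODE right-hand side is even in $F_N$. So the source metric \eqref{eq:newcapsource} is smooth near $0$, and the action is free, giving a smooth quotient. At $s=0$ the quotient-horizontal planes are base planes, with curvature $-F_N''/F_N\to3/\delta^2>0$. Continuity then closes the small neighborhood. I expect the main obstacle to be the uniform angular lower bound $(1-F_N'^2)/(F_N^3F_N')\gtrsim\delta^{-3}$ over the whole interval $0<F_N\le F_*$, in particular the regime $x\gg1$. There I would use $1-F_N'^2\ge1/2$ and $F_N^3F_N'=\delta^3x^3y^{-3/2}\le\delta^3$. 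In the regime $x\le1$ I would use $1-y^{-3}\ge x^2$ and $F_N^3F_N'\le\delta^3x^3$, which gives the ratio $\ge\delta^{-3}/x\ge\delta^{-3}$. Constants would then be absorbed in \eqref{eq:newdelta}.
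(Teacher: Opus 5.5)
Your overall route is the paper's. You verify the four conditions of \cref{lem:capcriterion} for $s>0$ using $r_N'/r_N^3\le 8B_N$ and \eqref{eq:newdelta}, and then treat the center separately. Your radial estimate and your center argument agree with the paper's: you compare $-F_N''/F_N\ge 768/(625\delta^2)$ against a loss of at most $2L^2B_NM_\chi\delta\le 1/(16\delta^2)$. Deducing that $F_N$ is odd from the evenness of the ODE's right-hand side is a fine substitute for the paper's analytic-inverse argument.

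The angular inequality is the one step that is not established as written, and it is exactly what the profile \eqref{eq:newF} was designed for.
\begin{itemize}
\item In the regime $x\le 1$ you use $1-y^{-3}\ge x^2$. This is false near $x=1$. Since $1-y^{-3}=x^2(y^{-1}+y^{-2}+y^{-3})$, the bracket drops below $1$ once $y$ exceeds about $1.84$; at $x=1$ it equals $7/8$.
\item In the regime $x\ge1$, your bound $1-F_N'^2\ge 1/2$ only gives the constant $1/2$. So the claim that the loss is at most half the positive term does not follow.
\item The bound has nothing to do with $F_*$ or with $\delta\le F_*/2$.
\item You cannot absorb constants into \eqref{eq:newdelta}, because that constraint is fixed by the construction.
\end{itemize}

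The clean fix is to keep $F_N'=y^{-3/2}$ instead of bounding it by $1$. Then, using $x^2=y-1$,
\[
\delta^3\,\frac{1-F_N'^2}{F_N^3F_N'}=\frac{y^2+y+1}{x\,y^{3/2}},\qquad (y^2+y+1)^2-x^2y^3=3y^3+3y^2+2y+1>0 .
\]
So the ratio exceeds $\delta^{-3}$ for every $x>0$. With $v=x^2$ this is precisely the paper's identity $(v^2+3v+3)/(\sqrt v\,(1+v)^{3/2})>1$.

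Alternatively, you can repair your two-regime argument. Use $1-y^{-3}\ge\tfrac78x^2$ for $x\le1$ and $1-y^{-3}\ge\tfrac78$ for $x\ge1$; this gives the uniform constant $7/8$. That suffices once you keep the factor $1+M_\chi$ from \eqref{eq:newdelta}. With it, the angular loss is at most $\frac{1}{2(1+M_\chi)}\delta^{-3}F_NF_N'$. Hence any uniform constant larger than $1/(2(1+M_\chi))$ gives the strict inequality in \eqref{eq:capcriterion}.
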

\begin{proof}
For $v=F_N^2/\delta^2>0$,
\[
\delta^3\frac{1-F_N'^2}{F_N^3F_N'}
=\frac{v^2+3v+3}{\sqrt v(1+v)^{3/2}}>1.
\]
Indeed the difference of the squared numerator and denominator is
$3v^3+12v^2+17v+9>0$.
Also $r_N'/r_N^3\le8B_N$. Equation~\eqref{eq:newdelta} therefore implies the first strict inequality in \eqref{eq:capcriterion} throughout $s>0$.

On the support of $r_N''$, one has $s\le\delta/2$ and $F_N\le s$. Direct differentiation of \eqref{eq:newF} gives
\[
-\frac{F_N''}{F_N}=\frac3{\delta^2}(1+F_N^2/\delta^2)^{-4}
\ge\frac{768}{625\delta^2},
\]
whereas
\[
\frac{L^2F_N^2r_N''}{r_N^3}
\le2L^2B_N M_\chi\delta
\le\frac{M_\chi}{16(1+M_\chi)\delta^2}.
\]
Off this support, $r_N''=0$ and $-F_N''/F_N>0$. The other conditions in \eqref{eq:capcriterion} follow from $0\le r_N'\le d_N<1$ and \eqref{eq:newF}.

It remains to include the center. The implicit function
$s=\int_0^{F_N}(1+y^2/\delta^2)^{3/2}\,dy$ is odd and analytic near zero, with derivative one. Its inverse $F_N(s)$ is therefore odd and analytic, with
$F_N(s)=s-s^3/(2\delta^2)+O(s^5)$.
This gives the full polar smoothness condition, and $r_N$ is constant near the center. The source disk is smooth there; its horizontal planes at the center lie in the disk factor and have limiting curvature $3/\delta^2>0$. The quotient is smooth and remains positively curved at the center. Lemma~\ref{lem:capcriterion} handles every other point.
\end{proof}

\subsection{The right cap and the two boundary forms}
Use outward second fundamental forms throughout, with the convention fixed in \cref{sec:connection,lem:localgluing}. At the left boundary, the source radii agree by \eqref{eq:newrend}, so their horizontal graph descriptions agree. For the scaled horizontal lift $X+U$ of a boundary tangent vector, the sum of the two outward forms is
\begin{equation}\label{eq:leftshapesum}
(\II_I+\II_N)(X+U,X+U)
=(\tan(3/20)+\mu_N)|X|^2+(q_N-a)|U|^2,
\end{equation}
where $\mu_N=F_N'(\ell)/F_*>0$ and $q_N-a=a/10$. Since $a\le1/50$, this is at least $(a/10)(|X|^2+|U|^2)$, including at the fixed points of the sphere action by smooth continuation.

For the right cap take the constant auxiliary radius and round-cap profile
\begin{equation}\label{eq:rightcap}
r_S=\varepsilon_I(1+3a/4),\qquad
F_S(\tau)=\cos(23/20+\tau),\quad0\le\tau\le\pi/2-23/20.
\end{equation}
Here $\tau=0$ is the boundary and the other endpoint is the center. Use the same free quotient action as above. The source is the product of a unit-curvature spherical cap with the round $S^3$ of radius $r_S$. Every quotient-horizontal two-plane projects injectively to the disk factor, so its source curvature numerator contains a strictly positive disk-area term; all remaining terms are nonnegative. O'Neill's formula proves positivity on the whole quotient. At the center, $F_S$ is the sine of the distance to the center, giving the usual smooth polar coordinates.

In ordinary boundary coordinates at the right endpoint of the cylinder, the horizontal coefficients in the two outward forms are $-\tan(23/20)$ and $+\tan(23/20)$. They cancel, leaving
\begin{equation}\label{eq:rightshapesum}
(\II_I+\II_S)(X+U,X+U)
=\frac{a}{2(1+3a/4)}|U|^2\ge0.
\end{equation}
The actual cylinder collar is the $T_R$ pullback of the ordinary collar by \eqref{eq:TRpullback}. Gluing the right cap by $T_R$ pulls back both the metric and \eqref{eq:rightshapesum} by the same map. Thus the boundary equality and the semidefinite form inequality hold for the prescribed marking. For clarity, all four boundary data are recorded below in ordinary coordinates; the two right-hand entries are then compared using $T_R$.
\begin{center}
\small\renewcommand{\arraystretch}{1.18}
\begin{tabular}{@{}lcccc@{}}
\toprule
Datum & Left disk & Cylinder left & Cylinder right & Right cap\\
\midrule
Base radius & $\cos(3/20)$ & $\cos(3/20)$ & $\cos(23/20)$ & $\cos(23/20)$\\
Fiber radius & $\varepsilon_I$ & $\varepsilon_I$ & $r_+$ & $r_+$\\
Outward normal & $\partial_s$ & $-\partial_t$ & $\partial_t$ & $-\partial_\tau$\\
Base slope & $\mu_N$ & $\tan(3/20)$ & $-\tan(23/20)$ & $\tan(23/20)$\\
Fiber slope & $q_N$ & $-a$ & $q_+$ & $0$\\
\bottomrule
\end{tabular}
\end{center}
Here $r_+=\varepsilon_I(1+3a/4)$ and $q_+=a/[2(1+3a/4)]$. A slope denotes the logarithmic derivative of the source radius along the outward normal. Restriction to the common horizontal graph gives the quotient second fundamental form, so the sums in \eqref{eq:leftshapesum} and \eqref{eq:rightshapesum} follow directly from the table.

\subsection{Smoothing and the common parameter choice}
\begin{proof}[Proof of \cref{thm:lastpositive}]
First fix $c,\lambda,\mathcal U$ and the finite constant $C$ in \eqref{eq:cylinderconstant}. Choose $A_I\ge40(C+4C^2+1)$, then $B_N,\delta,\ell$ as above. Finally choose $\varepsilon_I>0$ so that
\begin{equation}\label{eq:finalparameters}
\varepsilon_I^2\le\min\left\{
\frac1{50A_I},\frac1{4C},\frac1{8(C+5C^2)},\frac1{2B_N\ell}
\right\},
\end{equation}
and decrease it further to have $B_N\varepsilon_I^3<1$ and $r'<1$. All quantities on the right are fixed before $\varepsilon_I$ is chosen, so these requirements are compatible.

Propositions~\ref{prop:cylinderpositive} and \ref{prop:leftcap}, and the right-cap argument above, give positivity on the three complete pieces. Their boundaries are isometric under the stated identifications. The outward forms have positive-definite sum on the left and positive-semidefinite sum on the right by \eqref{eq:leftshapesum}--\eqref{eq:rightshapesum}. Apply \cref{lem:localgluing} with $\kappa=0$ at the two seams. Its normal-block estimate \eqref{eq:gluenormal} permits the semidefinite sum on the right, including its kernel directions. Choose disjoint seam neighborhoods and apply the lemma successively, so the first modification leaves the second seam unchanged. This gives a smooth positive metric. The cylinder and disk parameters are fixed first; the two smoothing widths are chosen last.

Topologically the cylinder is a standard boundary collar. The left cap is attached by the identity and the right cap by $T_R$, so the closed smooth manifold is $D^9\cup_{T_R}D^9$. This proves the assertion for every fixed $c$ in the statement. When $[c]=\eta_3\eta_4$ is the chosen representative, \cref{cor:rightclass} identifies the double as $K+C_\eta$.
\end{proof}

\begin{proof}[Proof of \cref{thm:classification}]
The standard sphere has positive sectional curvature. Theorem~\ref{thm:positive} gives such a metric on $K$, and \cref{cor:mixedpositive} gives one on $C_\eta$. Theorem~\ref{thm:lastpositive} gives a positive metric on $K+C_\eta$. These are all the elements of $\ker\alpha$ by \eqref{eq:Theta9}--\eqref{eq:alpha-values}. Conversely, \cref{prop:nonnegative-obstruction} excludes even nonnegative sectional curvature when $\alpha\ne0$. Hence \eqref{eq:classification} follows.
\end{proof}

\appendix
\section{Local smoothing with a strict curvature threshold}\label{app:localgluing}
This appendix proves \cref{lem:localgluing} in the normal collar coordinates of the prescribed boundary isometry. All estimates refer to fixed metrics; the smoothing width is the last parameter chosen.

\begin{proof}[Proof of \cref{lem:localgluing}]
\emph{Common collar and uniform estimates.}
Pull back the positive-side boundary coordinates by $\phi$ and use normal collars. On $(-2a,2a)\times X$, with $X$ the common compact boundary, the glued continuous metric is
\[
dt^2+h(t),\qquad h(0^-)=h(0^+)=h_0.
\]
The parameter increases from $M_-$ to $M_+$. The two families are smooth up to $t=0$ from their respective sides. Put
\begin{equation}\label{eq:gluejump}
A=h'(0^-),\quad B=h'(0^+),\quad D=A-B=2(\II_-+\phi^*\II_+)\ge0.
\end{equation}
All tensor comparisons below use $\bar g=dt^2+h_0$ and the fixed collar identification. On finitely many coordinate charts covering $X$, smoothness on the two closed half-collars gives uniform bounds for every derivative used below. In particular, for $|s|\le2\delta$, separately on each side,
\begin{equation}\label{eq:gluejets}
\begin{split}
\|h(s)-h_0\|_{C^2(X)}&\le C\delta,\\
\|h'(s)-A\|_{C^1(X)}&\le C\delta\quad(s<0),\\
\|h'(s)-B\|_{C^1(X)}&\le C\delta\quad(s>0),\qquad
\|h''(s)\|_{C^0(X)}\le C.
\end{split}
\end{equation}
Constants denoted by $C$ may increase, but depend only on the fixed metrics and collar, not on $\delta$. The notation $O(\delta)$ for a tensor means a bound $C\delta$ in its $\bar g$-norm, uniformly in the indicated collar. The width $a$ is fixed first and may be chosen inside any prescribed seam neighborhood.

\emph{The kernel and the distributional derivative.}
Choose the smooth even probability density
\[
\rho(v)=c_\rho\exp\bigl(-1/(1-v^2)\bigr)\quad(|v|<1),\qquad
\rho(v)=0\quad(|v|\ge1),
\]
where $c_\rho>0$ normalizes its integral. It is nonincreasing for $v\ge0$. For $0<\delta<a/4$, define, on $|t|<a$,
\[
\rho_\delta(t)=\delta^{-1}\rho(t/\delta),\qquad
\langle T\rangle_\delta(t)=\int_{-1}^1\rho(v)T(t-\delta v)\,dv,
\qquad h_\delta=\langle h\rangle_\delta.
\]
Averages use the fixed product identification of the tangent spaces; no metric is used to raise indices before averaging. Positivity of $h_\delta$ follows from positivity of the metrics being averaged. The family $h_\delta$ is smooth in both variables. Indeed, normal convolution smooths each piecewise smooth family, and all tangential derivatives commute with the integral.

Let $S=h'$ off the seam and let $h''_{\mathrm{reg}}$ denote the ordinary second derivative on the two sides. Continuity of $h$ and \eqref{eq:gluejump} give the distributional identities
\begin{equation}\label{eq:gluedistribution}
h_\delta'=\langle S\rangle_\delta,\qquad
h_\delta''=\langle h''_{\mathrm{reg}}\rangle_\delta-\rho_\delta(t)D.
\end{equation}
For $|t|\le\delta$, set
\[
\lambda(t)=\int_{t/\delta}^1\rho(v)\,dv,
\]
the mass sampling the negative side. The monotonicity and symmetry of $\rho$ imply
\begin{equation}\label{eq:kernelmass}
\lambda(1-\lambda)\le\delta\rho_\delta(t).
\end{equation}
For example, if $u=t/\delta\ge0$, then
$\lambda\le(1-u)\rho(u)\le\rho(u)$; if $u\le0$, the same argument gives $1-\lambda\le\rho(u)$. This also proves the inequality at the endpoints, where both sides vanish. Equations~\eqref{eq:gluejets} give, uniformly on this strip,
\begin{equation}\label{eq:glueaveragejets}
h_\delta=h_0+O(\delta)\ \text{in }C^2(X),\qquad
h_\delta'=\lambda A+(1-\lambda)B+O(\delta)\ \text{in }C^1(X).
\end{equation}

\emph{Curvature identities.}
For a smooth family $h$, put $S=h'$. In the curvature-operator convention of \cref{sec:connection}, the metric $dt^2+h$ satisfies
\begin{align}
\mathcal R(X\wedge Y,X\wedge Y)
&=\mathcal R_h(X\wedge Y,X\wedge Y)
-\tfrac14\bigl(S(X,X)S(Y,Y)-S(X,Y)^2\bigr),\label{eq:glueGauss}\\
\mathcal R(\partial_t\wedge Z,\partial_t\wedge W)
&=-\tfrac12h''(Z,W)+\tfrac14(Sh^{-1}S)(Z,W),\label{eq:glueRiccati}\\
\mathcal R(\partial_t\wedge X,Y\wedge Z)
&=\tfrac12\bigl((\nabla^h_ZS)(X,Y)-(\nabla^h_YS)(X,Z)\bigr).
\label{eq:glueCodazzi}
\end{align}
Here all capital-letter vectors are tangent to $X$. These formulas follow directly from
$\Gamma^t_{ij}=-S_{ij}/2$,
$\Gamma^i_{tj}=h^{ik}S_{kj}/2$, and
$\Gamma^i_{jk}=\Gamma^i_{jk}(h)$ in the coordinate curvature formula \eqref{eq:coordinateR}. Alternatively, they are the Gauss, normal-curvature, and Codazzi formulas for the slices. We compare their covariant components, so the possibly large term in \eqref{eq:gluedistribution} is never multiplied by an approximate inverse metric.

Let $\mathcal R_\delta$ be the curvature tensor of $dt^2+h_\delta$, and let $\overline{\mathcal R}_\delta=\langle\mathcal R\rangle_\delta$ average the ordinary curvature tensors of the original metrics off the seam. The value at the seam itself is immaterial to this integral.

\emph{The tangential block.}
For a symmetric bilinear form $T$ define
$\Delta_T(X,Y)=T(X,X)T(Y,Y)-T(X,Y)^2$.
For $\widehat S=\lambda A+(1-\lambda)B$, the exact quadratic identity is
\begin{equation}\label{eq:gluedetvariance}
\lambda\Delta_A+(1-\lambda)\Delta_B-\Delta_{\widehat S}
=\lambda(1-\lambda)\Delta_D.
\end{equation}
The intrinsic curvature tensor is a smooth function of a metric and its first two tangential derivatives on uniformly positive metrics. Thus \eqref{eq:gluejets}--\eqref{eq:glueaveragejets} imply
$\mathcal R_{h_\delta}-\langle\mathcal R_{h}\rangle_\delta=O(\delta)$.
Replacing $S(t-\delta v)$ by $A$ or $B$ in the quadratic terms also has error $O(\delta)$. It follows from \eqref{eq:glueGauss} and \eqref{eq:gluedetvariance} that
\begin{equation}\label{eq:gluetangent}
(\mathcal R_\delta-\overline{\mathcal R}_\delta)(X\wedge Y,X\wedge Y)
=\tfrac14\lambda(1-\lambda)\Delta_D(X,Y)
+O(\delta)|X\wedge Y|_{h_0}^2.
\end{equation}
The remainder is a uniformly bounded curvature-type tensor times $\delta$, which gives the stated area bound even for dependent or nearly dependent $X,Y$. Since $D\ge0$, its restriction to any two-plane has nonnegative determinant; hence $\Delta_D\ge0$.

\emph{The normal and cross blocks.}
Keeping the second-derivative average in \eqref{eq:gluedistribution} exact, use
$h_\delta^{-1}=h_0^{-1}+O(\delta)$ and the same estimate for every sampled $h^{-1}$. The quadratic identity
\[
\widehat S h_0^{-1}\widehat S
-\lambda A h_0^{-1}A-(1-\lambda)B h_0^{-1}B
=-\lambda(1-\lambda)D h_0^{-1}D
\]
and \eqref{eq:glueRiccati} yield
\begin{equation}\label{eq:gluenormal}
\begin{split}
(\mathcal R_\delta-\overline{\mathcal R}_\delta)
(\partial_t\wedge Z,\partial_t\wedge Z)
={}&\tfrac12\rho_\delta(t)D(Z,Z)\\
&-\tfrac14\lambda(1-\lambda)(Dh_0^{-1}D)(Z,Z)
+O(\delta)|Z|_{h_0}^2.
\end{split}
\end{equation}
Let $L=\sup_X\|h_0^{-1}D\|_{\op}<\infty$. The spectral theorem gives
$Dh_0^{-1}D\le LD$. By \eqref{eq:kernelmass}, the two displayed leading terms in \eqref{eq:gluenormal} have sum at least
\[
\rho_\delta(t)(1/2-\delta L/4)D(Z,Z)\ge0
\]
when $\delta L\le2$. This remains true in every kernel direction of $D$.

For the cross block, \eqref{eq:glueCodazzi} is linear in the first tangential derivatives of $S$. Tangential differentiation commutes with averaging. The Christoffel symbols of $h_\delta$ and of every sampled $h$ differ from those of $h_0$ by $O(\delta)$, while $S$ and its tangential derivatives are uniformly bounded. Therefore
\begin{equation}\label{eq:gluecross}
\bigl|(\mathcal R_\delta-\overline{\mathcal R}_\delta)
(\partial_t\wedge Z,\eta)\bigr|
\le C\delta|Z|_{h_0}|\eta|_{h_0},\qquad \eta\in\Lambda^2TX.
\end{equation}
In particular there is no unbounded cross-block error at the seam.

\emph{All sectional curvatures and localization.}
Every decomposable bivector is of the form
\[
\xi=(b\partial_t+X)\wedge(c\partial_t+Y)
=X\wedge Y+\partial_t\wedge(bY-cX).
\]
Its tangential component is decomposable. Combining \eqref{eq:gluetangent}--\eqref{eq:gluecross}, and using $2uv\le u^2+v^2$ for the cross error, gives
\begin{equation}\label{eq:glueallplanes}
\mathcal R_\delta(\xi,\xi)
\ge\overline{\mathcal R}_\delta(\xi,\xi)-C\delta|\xi|_{\bar g}^2
\qquad(|t|\le\delta).
\end{equation}
This argument uses positivity on decomposable bivectors only; no positive-curvature-operator hypothesis has been imposed on either original metric.

Compactness and the strict hypotheses give a number $\gamma>0$ with
$\sec(g_\pm)\ge\kappa+\gamma$. Applying this bound to the same decomposable $\xi$ in every sampled tangent space gives
\[
\overline{\mathcal R}_\delta(\xi,\xi)
\ge(\kappa+\gamma)\langle|\xi|_{dt^2+h}^2\rangle_\delta.
\]
On the crossing strip, the averaged squared norm differs from
$|\xi|_{dt^2+h_\delta}^2$ by at most $C\delta|\xi|_{\bar g}^2$, by \eqref{eq:gluejets}. These norms are uniformly equivalent. Equation~\eqref{eq:glueallplanes} consequently gives
$\sec(dt^2+h_\delta)\ge\kappa+\gamma/2$ there after decreasing $\delta$.
For $\delta\le|t|<a$, the convolution samples only one smooth side. The one-sided smooth derivative bounds give uniform $C^2$ convergence to the original metric on this region, and hence $\sec>\kappa$ there as well.

Finally choose a fixed smooth cutoff $\chi(t)$ equal to one for $|t|\le a/2$ and zero for $|t|\ge3a/4$, and replace $h$ by
\[
\widetilde h_\delta=h+\chi(t)(h_\delta-h).
\]
This is smooth across zero, positive definite, and equals $h$ outside the chosen collar. On the support of $d\chi$ the original family is smooth and $h_\delta-h\to0$ in $C^2$; the cutoff and its derivatives are fixed before $\delta$ is chosen. The same strict curvature margin therefore survives this final modification. This proves the lemma with the prescribed attaching map and support.
\end{proof}

\section{Coordinate derivation of the bilateral curvature blocks}\label{app:bilateral-derivation}
This appendix proves \cref{prop:bilateralblocks} directly from the coordinate metric. The base metric is $h$, the fiber metric $\sigma$ has constant curvature one, and the horizontal lifts are $X_i=\partial_i-K_i$. All quantities are evaluated in the normal gauges of \eqref{eq:normalgauge}. We use the four-index convention \eqref{eq:fourindex-convention} throughout.

\subsection{Metric jets and the coordinate formula}
The coordinate components are
\[
g_{ij}=h_{ij}+u^2\sigma(K_i,K_j),\quad
g_{ia}=u^2\sigma(K_i,\partial_a),\quad g_{ab}=u^2\sigma_{ab}.
\]
Besides the normal-coordinate two-jets of $h$ and $\sigma$, their nonzero derivatives needed for curvature are
\begin{align}
g_{ia,j}&=u^2L_{ij}^a,& g_{ab,i}&=2uu_i\delta_{ab},\nonumber\\
g_{ij,kl}-h_{ij,kl}&=u^2\sum_a(L_{ik}^aL_{jl}^a+L_{il}^aL_{jk}^a),\nonumber\\
g_{ia,jk}&=2u(u_jL_{ik}^a+u_kL_{ij}^a)+u^2M_{ijk}^a,\nonumber\\
g_{ia,jb}&=u^2S_{ijba},&g_{ab,ij}&=2(u_i u_j+uu_{ij})\delta_{ab}.
\label{eq:metricjet}
\end{align}
The remaining cross derivatives vanish at the point. In particular,
\begin{gather*}
\Gamma^k_{ij}=\Gamma^a_{ij}=0,\qquad
\Gamma^k_{ia}=-u^2L_{ik}^a,\qquad \Gamma^b_{ia}=u_i\delta_{ab}/u,\\
\Gamma^k_{ab}=-uu_k\delta_{ab},\qquad\Gamma^c_{ab}=0.
\end{gather*}

In coordinate indices, with $\Gamma_{IJ}:=\nabla_{\partial_I}\partial_J$, the curvature is
\begin{equation}\label{eq:coordinateR}
\begin{split}
R_{IJKL}={}&\tfrac12(g_{IL,JK}+g_{JK,IL}-g_{IK,JL}-g_{JL,IK})\\
&+g(\Gamma_{IL},\Gamma_{JK})-g(\Gamma_{IK},\Gamma_{JL}).
\end{split}
\end{equation}
Write $\mathscr D_{IJKL}$ for the second-derivative term in the first line and $\mathscr P_{IJKL}$ for the Christoffel products in the second line. The coordinate vertical vectors have length $u$. Therefore a component with $k$ vertical indices is multiplied by $u^{-k}$ when expressed in $(X_i,E_a)$.

\subsection{The two blocks containing the linear term}
For $HVHV$, direct substitution before vertical normalization gives
\begin{align*}
\mathscr D_{iajb}
&=u^2S_{ijab}-(u_i u_j+uu_{ij})\delta_{ab},\\
\mathscr P_{iajb}
&=u^4\sum_k L_{ik}^bL_{jk}^a+u_i u_j\delta_{ab}.
\end{align*}
For the first identity, the two mixed metric derivatives are
$g_{ib,aj}=u^2S_{ijab}$ and
$g_{aj,ib}=u^2S_{jiba}=u^2S_{ijab}$, whereas
$g_{ij,ab}=0$ and $g_{ab,ij}=2(u_i u_j+uu_{ij})\delta_{ab}$.
The product $g(\Gamma_{ij},\Gamma_{ab})$ vanishes, and the horizontal and vertical parts of $g(\Gamma_{ib},\Gamma_{aj})$ give the two terms displayed for $\mathscr P$.
Adding and dividing by $u^2$ cancels $u_i u_j/u^2$ and yields
\[
R_{iajb}=-u^{-1}u_{ij}\delta_{ab}+S_{ijab}
+u^2\sum_kL_{ik}^bL_{jk}^a.
\]

For $HHVV$, each of the four second derivatives contributes the same sign after using both skew symmetries of $S$. The two Christoffel products have identical vertical parts, which cancel. Explicitly,
\begin{align*}
\mathscr D_{ijab}&=2u^2S_{ijab},\\
\mathscr P_{ijab}
&=u^4\sum_k\bigl(L_{ik}^bL_{jk}^a-L_{ik}^aL_{jk}^b\bigr).
\end{align*}
Division by $u^2$ proves \eqref{eq:biHHVV}. Thus the coefficients of the unsuppressed linear term are $S$ on $HVHV$ and $2S$ on $HHVV$. The four-form in \eqref{eq:chosenfourform} contributes $-S$ and $+S$ on these respective blocks. This proves both the cancellation in \eqref{eq:modifiedHV} and the coefficient $3S$ in the remaining cross block.

\subsection{The other four blocks}
In the purely horizontal block $\Gamma_{ij}=0$ at the point. The second derivatives of $u^2\sigma(K_i,K_j)$ give
\[
R_{ijkl}-R^h_{ijkl}
=u^2\sum_a(-2L_{ij}^aL_{kl}^a-L_{ik}^aL_{jl}^a+L_{il}^aL_{jk}^a).
\]
In the $HHHV$ block the Christoffel products again vanish. Its coordinate second-derivative contribution is
\[
\mathscr D_{ijka}
=u(u_jL_{ik}^a-u_iL_{jk}^a+2u_kL_{ij}^a)
+\frac{u^2}{2}(M_{ijk}^a-M_{jik}^a),
\]
which, after division by $u$, proves \eqref{eq:biHHHV}.
For $HVVV$, the second-derivative contribution is zero in the chosen gauges and the horizontal Christoffel products give
\[
\mathscr P_{iabc}
=u^3\sum_k u_k(L_{ik}^c\delta_{ab}-L_{ik}^b\delta_{ac}).
\]
For $VVVV$, the intrinsic fiber term and the horizontal Christoffel products are respectively
\begin{align*}
\mathscr D_{abcd}&=u^2(\delta_{ac}\delta_{bd}-\delta_{ad}\delta_{bc}),\\
\mathscr P_{abcd}&=-u^2|du|_h^2(\delta_{ac}\delta_{bd}-\delta_{ad}\delta_{bc}).
\end{align*}
Vertical normalization gives \eqref{eq:biHVVV} and \eqref{eq:biVVVV}, completing the derivation of all six blocks.

\subsection{Comparison with the principal convention}
For a principal $S^3$ connection, in unit-fiber coefficients one has
\[
L_{ij}^a=-\tfrac12\Omega_{ij}^a,\qquad
S_{ijab}=-\tfrac14\langle[e_a,e_b],\Omega_{ij}\rangle.
\]
For the second identity, the curvature field on the group fiber is right invariant: its coefficient derivative is the adjoint derivative in \eqref{eq:Omega-vertical}. Combining that derivative with the Levi-Civita connection of the bi-invariant unit metric gives
$\langle\nabla_{e_a}L_{ij},e_b\rangle
=\tfrac14\langle[e_a,\Omega_{ij}],e_b\rangle$,
which is the displayed value of $S$. Substitution into $HVHV$ reproduces \eqref{eq:mixed1}; substitution into $HHVV$, with the minus sign specified after \eqref{eq:fourindex-convention}, reproduces \eqref{eq:mixed2}. This comparison checks the convention for the terms linear in curvature without changing either estimate.

\section{Finite parameter checks for the two disks}\label{app:parameters}
We verify that the values in \eqref{eq:parameters} satisfy all the conditions used for the two disks. The argument follows the parameter strategy in \cite[Section 8]{DHZ} and uses the fine estimate \eqref{eq:fine}.

\subsection{The southern coefficient matrix}
For the cap under consideration, $0\le\varphi\le A_K$ and $|d\varphi|\le A_K$. In the matrix in the proof of \cref{cor:smallfiber}, replace $D_0$ by $A_K$ and take $\kappa=1$. The limit is
\[
Q_0=\begin{pmatrix}1&-M_1&0\\-M_1&M_1^2+1&0\\0&0&1\end{pmatrix},
\qquad \lambda_{\min}(Q_0)\ge\lambda_0.
\]
Indeed, the first block has determinant one and trace $M_1^2+2$.

From $A_K\le4B_K^2$ and \eqref{eq:parameters}, one has $\varepsilon_K A_K<1$. Thus for $\mathcal{E}=e^{\varepsilon_K\varphi}$,
\[
1\le \mathcal{E}<3,\quad\sqrt{\mathcal{E}}<2,\quad
\mathcal{E}-1\le3\varepsilon_K A_K,\quad
\sqrt{\mathcal{E}}-1\le\varepsilon_K A_K,\quad
1-\mathcal{E}^{-1}\le\varepsilon_K A_K.
\]
Row-wise estimation gives $\|Q_{\varepsilon_K}-Q_0\|_\op\le\max(C_1,C_2,C_3)\sqrt{\varepsilon_K}$, where
\begin{align*}
C_1&=\tfrac94M_0^2+A_K(M_1+2M_0)+\tfrac32(M_0+M_0^2),\\
C_2&=\tfrac14A_K^2+\tfrac32A_K M_0^2
+A_K(M_1+2M_0)+A_K M_0,\\
C_3&=A_K+\tfrac14A_K^2
+\tfrac32(M_0+M_0^2)+A_K M_0.
\end{align*}
Using $M_0,M_1\le B_K$, $M_1+2M_0\le2B_K$, and $B_K\ge1$, one obtains $C_1\le(53/4)B_K^4$, $C_2\le22B_K^4$, and $C_3\le15B_K^4$. A common conservative bound is $32B_K^4$. Moreover $\lambda_0\ge(2B_K^2)^{-1}$, so
\[
\|Q_{\varepsilon_K}-Q_0\|_\op
\le32B_K^4\sqrt{\varepsilon_K}
=\frac1{8B_K^2}\le\frac{\lambda_0}{4}.
\]
Keeping the weaker $Q_{\varepsilon_K}\ge\lambda_0I/2$ and using
$H^2+M^2+V_*^2\ge\varepsilon_K|\xi|^2$ proves \eqref{eq:southlower}.

The sufficient condition in \cite[Proposition 3.1]{HLY}, with its common constant $C=4$, requires
\[
\Lambda\ge64M_0^2+1024\kappa^{-1}(M_1+1)^2+4.
\]
The matrix estimate above justifies the smaller value of $\Lambda$ used here directly from \cref{prop:fine}.

\subsection{The northern scalar bounds}
For $0<s\le1/2$ put $y=1/s\ge2$. The derivative of the smooth step satisfies
\[
\mathfrak h'(s)=\mathfrak h(s)(1-\mathfrak h(s))
\bigl(s^{-2}+(1-s)^{-2}\bigr)
\le2e^{2-y}y^2\le8.
\]
Symmetry gives the same bound on the other half. Hence $M_\chi\le32$.

Since $F_0^2=3/4$ and $A_K\le4B_K^2$, the quadratic constraint in \eqref{eq:deltaconditions} follows from $48/256<1$. The cubic constraint follows from
\[
\delta^3=\frac1{4096B_K^3}
<\frac1{1056B_K^2}
\le\frac1{8A_K F_0(1+M_\chi)}.
\]
Also $\delta\le1/16<F_0/2$. From \eqref{eq:Fimplicit},
\[
\ell<1+\frac{256}{3}B_K^2<128B_K^2,
\qquad
q_0\ell<2^{-8}B_K^{-8}<\frac12.
\]
Finally $\varrho_\partial<3\varepsilon_K$ and $A_K\le4B_K^2$ give
$d^2<12\varepsilon_K^3B_K^4<1$. These checks establish every constraint used in \eqref{eq:rcontrol}.

\section{Reproducible algebraic checks}\label{app:checks}
The supplied supplement contains three verification scripts that use only the Python standard library. They check the framing-matrix algebra, the two smoothing variance identities, and the bilateral curvature-block identities in a formal normal-gauge metric two-jet. They do not certify the topological identifications or the global analytic estimates. Stable equation labels, rather than equation numbers, identify the statements checked.

\subsection{Framing matrices}
The file \texttt{verify\_framing.py} enumerates all $128$ choices of $(U,b)$ in \eqref{eq:framing-blocks}. It checks $N_0^3=0$ and $\Phi^4=I$, computes negative powers using the matrix inverse, and compares the truncated binomial formula with direct powers for $-8\le m\le8$ on every vector of $\mathbb F_2^5$. These tests verify the algebraic consequence of the group structure and action established in \cref{prop:periodicity}. They do not prove the mapping class theorem or the three geometric calibrations in \cref{lem:calibrations}.

\subsection{Smoothing variance identities}
The file \texttt{verify\_gluing\_algebra.py} expands \eqref{eq:gluedetvariance} and the normal matrix identity preceding \eqref{eq:gluenormal} in independent formal variables with rational coefficients. It checks one tangential determinant identity and all $64$ entries of the normal matrix identity in seam dimension eight. The entries of the three symmetric matrices $A$, $B$ and $h_0^{-1}$ remain independent, and matrix multiplication retains its order. No matrix commutativity or numerical sampling is used. All $65$ polynomial residuals must vanish identically.

These checks do not address the distributional derivative, kernel-mass inequality, uniform tensor estimates, or cutoff localization. Those steps are proved in \cref{app:localgluing}. In particular, the polynomial identities alone are not a smoothing theorem.

\subsection{Curvature from a metric two-jet}
The file \texttt{verify\_bilateral\_jets.py} constructs the coordinate metric
$g=h+u^2\sigma(dy+K,dy+K)$ as a formal two-jet, using product-rule differentiation. Its computation of curvature uses only the metric derivatives, the inverse metric at the evaluation point, and \eqref{eq:coordinateR}; it does not use \eqref{eq:bilateralblocks} to construct that curvature. A separate expression for each claimed block is then subtracted. The coefficients are exact rational Laurent polynomials in the positive radius $u$, its first and second derivatives, the normal base metric jets, and the connection jets $L,S,M$, subject to the stated index symmetries. The unit round fiber supplies its normal-coordinate second jet. In horizontal dimension six and vertical dimension three, all $2835$ indexed residuals in the six listed blocks vanish. A further $648$ residuals check the cancellation of $S$ on $HVHV$ and its coefficient $3S$ on $HHVV$ after the four-form modification.

This is an algebraic check of the curvature identities for the specified jets. It does not prove that a global connection has those normal-gauge properties, bound its derivatives, verify the all-plane inequalities or singular-orbit extensions, or identify the smooth sphere. Those remain geometric arguments in the text.

\subsection{Reproduction and scope}
Run the following commands from the supplementary directory with Python~3:
\begin{verbatim}
python3 verify_framing.py
python3 verify_gluing_algebra.py
python3 verify_bilateral_jets.py
\end{verbatim}
Each script terminates with an assertion failure if a checked identity fails, and otherwise prints a JSON result. Alternatively, 	exttt{python3 run\_all.py} runs all three checks and writes the result files and a manifest recording the interpreter version and the script hashes. Assertions must remain enabled; do not run with 	exttt{-O}. The package includes the results and manifest for the supplied run. The accompanying \texttt{README.txt} describes the scope of each check.

The new coordinate-metric check covers the bilateral blocks in \cref{prop:bilateralblocks}, derived in \cref{app:bilateral-derivation}. The principal estimates in \cref{sec:connection}, including their norm constants, are proved analytically and are not separately certified by this script. The residual counts describe exact algebraic identities, not sampled sectional curvatures. Every geometric application also requires the smooth extensions and marked boundary comparisons in the text.

\begin{center}
\small\renewcommand{\arraystretch}{1.15}
\begin{tabularx}{\textwidth}{@{}>{\raggedright\arraybackslash}p{35mm}>{\raggedright\arraybackslash}X >{\raggedright\arraybackslash}p{37mm}@{}}
\toprule
Claim & Mathematical justification & Supplied computer check \\
\midrule
Smooth-type identifications & \Cref{thm:topology,lem:hopfcomparison,prop:mixedclass,lem:gauge-localization,lem:calibrations,cor:rightclass}. & None.\\
Framing periodicity & \Cref{prop:periodicity}. & Exact finite-field matrix algebra.\\
Principal and bilateral curvature & \Cref{prop:fine,prop:bilateralblocks,app:bilateral-derivation,prop:cylinderpositive,app:parameters}. & Exact bilateral metric-jet identities; no global estimate certification.\\
Smoothing variance identities & \Cref{app:localgluing}. & Exact formal-polynomial expansions.\\
Smooth positive metrics & Curvature estimates and boundary comparisons in \cref{thm:positive,prop:mixedmetric,thm:lastpositive}. & Not certified by the algebraic scripts.\\
\bottomrule
\end{tabularx}
\end{center}

\section*{Use of AI-assisted tools}
ChatGPT (OpenAI) was used to assist with manuscript organization and exposition, the checking and presentation of arguments, and the preparation of verification code. This assistance included making the action-norm dependence and parameter choices explicit in \cref{sec:mixedmetric}, organizing the marked twist construction in \cref{thm:lastpositive}, expanding the relative pseudo-isotopy argument in \cref{lem:tube-extension}, and developing and presenting the local convolution proof in \cref{app:localgluing}. The revision also used these tools to clarify the smoothing-space comparison, separate the gauge-localization argument, derive the ordinary Hopf quotient coefficient, and document the scope of the supplied algebraic checks. The present revision also used these tools to expand the comparison-map and framing explanations, derive the curvature blocks from metric two-jets, and prepare their formal-polynomial check. These checks are limited to the statements specified in \cref{app:checks}.

\bigskip
\begingroup
\small
\noindent\begin{minipage}{\textwidth}
\textsc{Zhiqi Chen}\\
School of Mathematics and Statistics, Guangdong University of Technology, Guangzhou 510520, P. R. China\\
\textit{E-mail:} \href{mailto:chenzhiqi@nankai.edu.cn}{\nolinkurl{chenzhiqi@nankai.edu.cn}}
\end{minipage}\par\medskip

\noindent\begin{minipage}{\textwidth}
\textsc{Shaoqiang Deng}\\
School of Mathematical Sciences and LPMC, Nankai University, Tianjin 300071, P. R. China\\
\textit{E-mail:} \href{mailto:dengsq@nankai.edu.cn}{\nolinkurl{dengsq@nankai.edu.cn}}
\end{minipage}\par\medskip

\noindent\begin{minipage}{\textwidth}
\textsc{Junna Ni}\\
School of Mathematical Sciences, South China Normal University, Guangzhou 510631, P. R. China\\
\textit{E-mail:} \href{mailto:nijunna@126.com}{\nolinkurl{nijunna@126.com}}
\end{minipage}\par\medskip

\noindent\begin{minipage}{\textwidth}
\textsc{Hui Zhang}\\
School of Mathematics, Southeast University, Nanjing 211189, P. R. China\\
\textit{E-mail:} \href{mailto:huizhang@mail.nankai.edu.cn}{\nolinkurl{huizhang@mail.nankai.edu.cn}}
\end{minipage}\par
\endgroup

\begin{thebibliography}{99}
\bibitem{BCG}
D. S. Berman, M. Cederwall and T. Schettini Gherardini,
\emph{Curvature of an exotic 7-sphere},
J. Geom. Phys. \textbf{216} (2025), Article 105590.
\href{https://doi.org/10.1016/j.geomphys.2025.105590}{doi:10.1016/j.geomphys.2025.105590}.

\bibitem{BM}
R. G. Bettiol and R. A. E. Mendes,
\emph{Strongly positive curvature},
Ann. Global Anal. Geom. \textbf{53} (2018), 287--309.
\href{https://doi.org/10.1007/s10455-017-9578-9}{doi:10.1007/s10455-017-9578-9}.

\bibitem{BS}
S. Brendle and R. Schoen,
\emph{Manifolds with 1/4-pinched curvature are space forms},
J. Amer. Math. Soc. \textbf{22} (2009), 287--307.
\href{https://doi.org/10.1090/S0894-0347-08-00613-9}{doi:10.1090/S0894-0347-08-00613-9}.

\bibitem{Burghelea}
D. Burghelea,
\emph{On the homotopy type of Diff$(M^n)$ and connected problems},
Ann. Inst. Fourier \textbf{23}, no.~2 (1973), 3--17.
\url{https://www.numdam.org/item/AIF_1973__23_2_3_0/}.

\bibitem{BKKT}
M. Bustamante, M. Krannich, A. Kupers and B. Tshishiku,
\emph{Mapping class groups of exotic tori and actions by $\mathrm{SL}_d(\Z)$},
preprint, \href{https://arxiv.org/abs/2305.08065v2}{arXiv:2305.08065v2} (2024).

\bibitem{CKS}
R. Casals, A. Keating and I. Smith, with an appendix by S. Courte,
\emph{Symplectomorphisms of exotic discs},
J. \'Ec. polytech. Math. \textbf{5} (2018), 289--316.
\href{https://doi.org/10.5802/jep.71}{doi:10.5802/jep.71}.

\bibitem{CDZ}
Z. Chen, S. Deng and H. Zhang,
\emph{Positive sectional curvature on a family of exotic fifteen-spheres},
unpublished manuscript, 2 October 2026.

\bibitem{CH}
D. Crowley and I. Hambleton,
\emph{Finite group actions on Kervaire manifolds},
Adv. Math. \textbf{283} (2015), 88--129.
Preprint: \href{https://arxiv.org/abs/1305.6546v2}{arXiv:1305.6546v2}.

\bibitem{CS}
D. Crowley and T. Schick,
\emph{The Gromoll filtration, $KO$-characteristic classes and metrics of positive scalar curvature},
Geom. Topol. \textbf{17} (2013), 1773--1789.
\href{https://doi.org/10.2140/gt.2013.17.1773}{doi:10.2140/gt.2013.17.1773}.

\bibitem{DHZ}
S. Deng, Z. Hu and H. Zhang,
\emph{Positive sectional curvature and non-isometric circle actions on a family of eleven-spheres},
preprint, \href{https://arxiv.org/abs/2609.32680v2}{arXiv:2609.32680v2}, 5 October 2026.

\bibitem{GG}
F. Galaz-Garc\'ia,
\emph{Positive sectional curvature on the exotic 8-sphere and order-three homotopy 10-spheres},
preprint, \href{https://arxiv.org/abs/2609.38126v2}{arXiv:2609.38126v2}, 1 October 2026.

\bibitem{GKS}
S. Goette, M. Kerin and K. Shankar,
\emph{Highly connected 7-manifolds and non-negative sectional curvature},
Ann. of Math. (2) \textbf{191} (2020), 829--892.
\href{https://doi.org/10.4007/annals.2020.191.3.3}{doi:10.4007/annals.2020.191.3.3}.

\bibitem{GM}
D. Gromoll and W. Meyer,
\emph{An exotic sphere with nonnegative sectional curvature},
Ann. of Math. (2) \textbf{100} (1974), 401--406.
\href{https://doi.org/10.2307/1971078}{doi:10.2307/1971078}.

\bibitem{GVWZ}
K. Grove, L. Verdiani, B. Wilking and W. Ziller,
\emph{Non-negative curvature obstructions in cohomogeneity one and the Kervaire spheres},
Ann. Sc. Norm. Super. Pisa Cl. Sci. (5) \textbf{5} (2006), 159--170.
\url{https://www.numdam.org/item/ASNSP_2006_5_5_2_159_0/}.

\bibitem{GZ}
K. Grove and W. Ziller,
\emph{Curvature and symmetry of Milnor spheres},
Ann. of Math. (2) \textbf{152} (2000), 331--367.
\href{https://doi.org/10.2307/2661385}{doi:10.2307/2661385}.

\bibitem{HLY}
Y.-H. He, Z. Liu and S.-T. Yau,
\emph{Positive Sectional Curvature on All Smooth 7-spheres},
preprint, \href{https://arxiv.org/abs/2609.29426v1}{arXiv:2609.29426v1}, 24 September 2026.

\bibitem{Hitchin}
N. Hitchin,
\emph{Harmonic spinors},
Adv. Math. \textbf{14} (1974), 1--55.

\bibitem{KM}
M. A. Kervaire and J. W. Milnor,
\emph{Groups of homotopy spheres. I},
Ann. of Math. (2) \textbf{77} (1963), 504--537.
\href{https://doi.org/10.2307/1970128}{doi:10.2307/1970128}.

\bibitem{KK}
M. Krannich and A. Kupers,
\emph{Pontryagin--Weiss classes and a rational decomposition of spaces of homeomorphisms},
preprint, \href{https://arxiv.org/abs/2504.20265v2}{arXiv:2504.20265v2} (2025).

\bibitem{KS}
R. C. Kirby and L. C. Siebenmann,
\emph{Foundational essays on topological manifolds, smoothings, and triangulations},
Annals of Mathematics Studies, No.~88, Princeton University Press, Princeton, N.J., 1977.
With notes by John Milnor and Michael Atiyah.

\bibitem{Krylov}
N. A. Krylov,
\emph{Relative mapping class group of $S^p\times D^q$},
preprint, \href{https://arxiv.org/abs/math/0409342v1}{arXiv:math/0409342v1} (2004).

\bibitem{Milnor}
J. Milnor,
\emph{On manifolds homeomorphic to the 7-sphere},
Ann. of Math. (2) \textbf{64} (1956), 399--405.
\href{https://doi.org/10.2307/1969983}{doi:10.2307/1969983}.

\bibitem{ONeill}
B. O'Neill,
\emph{The fundamental equations of a submersion},
Michigan Math. J. \textbf{13} (1966), 459--469.
\href{https://doi.org/10.1307/mmj/1028999604}{doi:10.1307/mmj/1028999604}.

\bibitem{Puttmann}
T. P\"uttmann,
\emph{On the commutator of unit quaternions and the numbers 12 and 24},
preprint, \href{https://arxiv.org/abs/1101.5147v4}{arXiv:1101.5147v4} (2012).

\bibitem{RW}
P. Reiser and D. J. Wraith,
\emph{A generalization of the Perelman gluing theorem and applications},
preprint, \href{https://arxiv.org/abs/2308.06996v2}{arXiv:2308.06996v2}, 17 January 2024.

\bibitem{Sarkar}
A. Sarkar,
\emph{Smooth structures on the product of 3-connected 8-manifolds with spheres},
Proc. Indian Acad. Sci. Math. Sci. \textbf{135} (2025), Article 20.
\href{https://doi.org/10.1007/s12044-025-00829-2}{doi:10.1007/s12044-025-00829-2}.
Preprint: \href{https://arxiv.org/abs/2502.20736}{arXiv:2502.20736}.

\bibitem{Speranca}
L. D. Speran\c ca,
\emph{Pulling back the Gromoll--Meyer construction and models of exotic spheres},
Proc. Amer. Math. Soc. \textbf{144} (2016), 3181--3196.
\href{https://doi.org/10.1090/proc/12945}{doi:10.1090/proc/12945}.

\bibitem{Stolz}
S. Stolz,
\emph{Simply connected manifolds of positive scalar curvature},
Ann. of Math. (2) \textbf{136} (1992), 511--540.
\href{https://doi.org/10.2307/2946598}{doi:10.2307/2946598}.

\bibitem{Totaro}
B. Totaro,
\emph{Complexifications of nonnegatively curved manifolds},
J. Eur. Math. Soc. \textbf{5} (2003), 69--94.
\href{https://doi.org/10.1007/s10097-002-0044-y}{doi:10.1007/s10097-002-0044-y}.

\bibitem{Wang}
W. Wang,
\emph{On some Gromoll filtration groups and fundamental groups of $\Diff_\partial(D^n)$},
preprint, \href{https://arxiv.org/abs/2504.04875v2}{arXiv:2504.04875v2} (2026).
\end{thebibliography}
\end{document}